\numberwithin{equation}{section}
\newtheorem{thm}{Theorem}[section]
\newtheorem*{tthm}{Theorem}
\newtheorem{lem}[thm]{Lemma}
\newtheorem{cor}[thm]{Corollary}
\newtheorem{prop}[thm]{Proposition}
\newtheorem*{claim}{Claim}
\newtheorem*{conjecture}{Conjecture}
\newtheorem*{claim1}{Claim 1}
\newtheorem*{claim2}{Claim 2}
\theoremstyle{definition}
\newtheorem{defin}[thm]{Definition}
\newtheorem{ex}[thm]{Example}
\newtheorem*{question}{Question}
\theoremstyle{remark}
\newtheorem{rmk}[thm]{Remark}
\DeclareMathOperator{\card}{card}
\DeclareMathOperator{\notdiv}{\not |}
\DeclareMathOperator{\ord}{ord}
\DeclareMathOperator{\Irr}{Irr}
\DeclareMathOperator{\codeg}{codeg}
\begin{document}
\title{A generalization of Dumas-Eisenstein criterion}
\author[Boris \v Sirola]{Boris \v Sirola}
\address{Department of Mathematics, University of Zagreb, Bijeni\v cka 30, 10000 Zagreb, Croatia}
\email{sirola@math.hr}

\subjclass{Primary 11R09.}

\begin{abstract}
We introduce an interesting and rather large class of monoid homomorphisms, on arbitrary integral domain $R$, that we call Dumas valuations. Then we formulate a conjecture addressing the question asking when a polynomial $f\in R[X]$ cannot be written as a product $f=gh$ for some nonconstant polynomials $g,h\in R[X]$. The statement of the conjecture presents a significant generalization of the classical Eisenstein-Dumas irreducibility criterion. In particular our approach can be very useful while studying the irreducibility problem for multivariate polynomials over any integral domain and polynomials over orders in algebraic number fields. We provide a strong evidence that our conjecture should be true.
\end{abstract}

\maketitle

\section*{Introduction}

Every ring we consider is commutative having an identity. Given a ring $R$, by $R^{\times}$ denote the set of its nonzero elements and by $R^{\ast}$ the group of its invertible elements. The set of positive integers is denoted by $\mathbb N$, while we define $\mathbb N_0=\mathbb N\cup \{ 0\}$.

A very interesting problem in commutative algebra is to understand the set of irreducible elements $\Irr R$ for a given integral domain $R$. In particular for various interesting integral domains $A$ we consider the rings of polynomials $A[X]=A[X_1,\ldots ,X_k]$, in $k\geq 1$ variables. Then for a certain $f\in A[X]$ one would often like to know whether it is irreducible or not. Let us emphasize that in what follows the meaning of the phrase ``$f$ is irreducible'' is that $f$ cannot be written as a product of two nonconstant polynomials of $A[X]$.
Perhaps the first result of such kind is due to Gauss who proved the irreducibility of cyclotomic polynomials. His result was years later generalized, first by Sch\" onemann \cite{Sc} and then by Eisenstein who established a well known, more special but simple and very applicable, criterion \cite{Ei}. This Eisenstein's criterion was, again years later, further generalized by Dumas \cite{Du} via the Newton polygons method; see also \cite{Ku,Or1,Or2,Or3,Re}, and \cite{Mo} for a nice survey of the method. Let us state that stronger result in its more or less original form. (Having in mind the well known Gauss lemma, the ring $\mathbb Z$ in the theorem can be replaced with any UFD.) For that purpose recall the $p$-order map $\ord _p:\mathbb Z\to \mathbb N_0\cup \{ +\infty\}$, defined as usual; where in particular $\ord _p(z)=+\infty$ if and only if $z=0$.

\begin{tthm} \textup{(Eisenstein-Dumas irreducibility criterion)}

\noindent
Let $f=c_nX^n+\cdots +c_1X+c_0$ be a polynomial in $\mathbb Z[X]$. Suppose there is a prime $p\in\mathbb N$ such that $\ord _p(c_n)=0$, $\gcd (n,\ord _p(c_0))=1$ and $(n-j)\ord _p(c_0) \leq n\ord _p(c_j)$ for $j=1,\ldots ,n-1$. Then $f$ is irreducible over $\mathbb Q$.
\end{tthm}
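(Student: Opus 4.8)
The plan is a proof by contradiction built on the Newton polygon of $f$ at the prime $p$. The case $n=1$ is trivial, since a linear polynomial is never a product of two nonconstant ones, so I assume $n\ge 2$; then $\gcd(n,\ord_p(c_0))=1$ forces $m:=\ord_p(c_0)\ge 1$. If $f$ were reducible over $\mathbb{Q}$, the Gauss lemma invoked in the statement lets me write $f=gh$ with $g=a_rX^r+\cdots+a_0$ and $h=b_sX^s+\cdots+b_0$ in $\mathbb{Z}[X]$, both nonconstant, $r,s\ge 1$ and $r+s=n$. From $c_n=a_rb_s$ together with $\ord_p(c_n)=0$ I get $\ord_p(a_r)=\ord_p(b_s)=0$, and from $c_0=a_0b_0\ne 0$ that $a_0,b_0\ne 0$.

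Next I would read off the shape of $N(f)$, the lower convex hull of the points $(j,\ord_p(c_j))$ for $0\le j\le n$. The hypotheses say precisely that $(n,0)$ and $(0,m)$ are among these points and that the inequality $(n-j)m\le n\,\ord_p(c_j)$ places every $(j,\ord_p(c_j))$ on or above the segment joining $(0,m)$ to $(n,0)$. Hence $N(f)$ is a single edge of slope $-m/n$, and since $\gcd(m,n)=1$ this edge meets the integer lattice only at its two endpoints.

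The engine of the argument is the multiplicativity of Newton polygons: $N(gh)$ is the Minkowski sum of $N(g)$ and $N(h)$, so the edge slopes of a product, each weighted by its horizontal length, form the union of those of the factors. Since $N(f)$ exhibits only the slope $-m/n$, every edge of $N(g)$ must have that slope; as $\ord_p(a_r)=0$ and $a_0\ne 0$, the polygon $N(g)$ is the single edge from $(0,\ord_p(a_0))$ to $(r,0)$, of slope $-\ord_p(a_0)/r$. Equating slopes gives $n\,\ord_p(a_0)=rm$, so $\gcd(m,n)=1$ forces $n\mid r$, contradicting $1\le r\le n-1$. Therefore $f$ admits no such factorization and is irreducible over $\mathbb{Q}$.

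The only step that genuinely requires work is the multiplicativity of the Newton polygon; the reduction to $\mathbb{Z}[X]$ and the concluding divisibility count are bookkeeping. To establish it I would introduce the largest convex minorants $\phi_g,\phi_h,\phi_f$ of the coefficient-order functions $i\mapsto\ord_p(a_i)$, $k\mapsto\ord_p(b_k)$ and $j\mapsto\ord_p(c_j)$ — whose graphs are the lower boundaries of $N(g),N(h),N(f)$ — and prove $\phi_f(j)=\min_{i+k=j}\bigl(\phi_g(i)+\phi_h(k)\bigr)$. The elementary inequality $\ord_p(c_j)\ge\min_{i+k=j}\bigl(\ord_p(a_i)+\ord_p(b_k)\bigr)$ immediately puts $N(f)$ on or above the Minkowski sum; the reverse inequality at the vertices is the crux, and there I would use that a vertex of the sum is \emph{uniquely} a sum of a vertex of $N(g)$ and one of $N(h)$, so the minimal-order term in the corresponding $c_j$ arises from a single pair $(i,k)$ and cannot cancel. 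I expect this non-cancellation at vertices to be the main obstacle; the coprimality $\gcd(m,n)=1$ then enters only afterwards, as the arithmetic fact forbidding the single edge of $N(f)$ from being subdivided at an interior lattice point into two integer-endpoint subsegments of the same slope.
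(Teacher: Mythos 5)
Your proposal is correct, but it is worth being clear about what it is being compared to: the paper never proves this theorem. It is quoted as the classical criterion, with the proof attributed to Dumas \cite{Du} via the Newton polygon method --- which is exactly the argument you reconstruct. Your reduction to $\mathbb Z[X]$ via Gauss's lemma, the observation that the hypotheses say $N(f)$ is the single segment from $(0,m)$ to $(n,0)$ with $\gcd(m,n)=1$, and the slope count $n\,\ord_p(a_0)=rm\Rightarrow n\mid r$ contradicting $1\le r\le n-1$ are all sound; you also correctly isolate the one nontrivial ingredient, the Minkowski-sum multiplicativity $N(gh)=N(g)+N(h)$, and your sketch of it is right (the ultrametric inequality gives one containment, and at a vertex of the sum the decomposition into a vertex of $N(g)$ plus a vertex of $N(h)$ is unique, so the minimal-order term in the convolution $c_j=\sum_{i+k=j}a_ib_k$ cannot cancel, giving equality there; convexity then forces the boundaries to agree). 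By contrast, the paper's own machinery proves a statement of this type by an entirely different, purely algebraic route: assuming $f=gh$, it tracks the values of an abstract Dumas valuation $\omega$ on the coefficient equations $\varphi_k=\sum_{i+j=k}a_ib_j$ through an exhaustive case analysis (Lemmas \ref{degh=1-lemma}, \ref{2Ddegh=2-lemma}, \ref{2Ddegh=3-lemma}). That method buys generality in the ring and the valuation --- it works over any integral domain, where no valued-field or polygon theory is available, which is the point of the paper's Conjecture --- but it only delivers the conclusion for $n\le 6$ (Theorem \ref{THMDumas2kind}, applied with $\omega=\ord_p$, a Dumas valuation of the second kind by Corollary \ref{corU2}); your Newton-polygon argument handles all $n$ at once but is tied to the classical setting ($\mathbb Z$, or a UFD plus Gauss's lemma, where $\ord_p$ extends to a genuine valuation). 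Two cosmetic points: the Gauss lemma is invoked in the paper's surrounding text, not in the statement itself; and once you know $N(g)$ is a single lattice-endpoint segment of slope $-m/n$ with $\gcd(m,n)=1$, you could conclude slightly faster by noting its horizontal length must be a multiple of $n$, which is the same divisibility count you perform.
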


\noindent
The above criterion was further generalized, based on ideas due to Sch\" onemann, Eisenstein and Dumas, so that $\mathbb Q$ is replaced with any valued field; see for example \cite{Br,KhSa,MacL,PZ}.
In general the method using Newton polygons is a very powerful technique, both in number theory and algebraic geometry. For example using it one can further treat reducibility questions for various interesting polynomials; see for example \cite{Fi,FiTr}.
For a relatively recent work on irreducibility of multivariate polynomials with coefficients from any field, via Newton polytopes method inspired by the work of Ostrowski \cite{Os1,Os2}, see \cite{Ga}.

The crucial idea of our approach in this work is to introduce a certain interesting and large enough class of monoid homomorphisms that we call Dumas valuations, and more generally $\Gamma$-Dumas valuations. There are two kinds of such maps; see Sect. \ref{prelim} for details. For an integral domain $A$ let $A[X]$ be the corresponding ring of polynomials, as above. Then for Dumas valuations of the first kind on the ring $R=A[X]$ we will have various polynomial degree maps. A typical Dumas valuation of the second kind arises as follows. We take a prime element $p$ of $R$, when $R$ is a Noetherian integral domain or a UFD, and consider the $p$-order map $\ord _p:R\to \mathbb N_0\cup \{ +\infty\}$. But we also have another interesting family of Dumas valuations of the second kind given via the below introduced codegree maps.

The central theme of this work is to provide a strong evidence that the following conjecture holds. Note that its statement, for $\omega$ being a Dumas valuation of any kind, presents a significant generalization for a number of so far established results addressing the irreducibility questions for polynomials in finitely many variables, with coefficients from some integral domain.
\begin{conjecture}
Let $R$ be an integral domain. Let $n>1$ be a natural number and elements $\varphi _0,\varphi _1,\ldots ,\varphi _n\in R$. Let $\omega$ be a Dumas valuation on $R$ and assume the following:
\begin{itemize}
\item[({\sf A})] $\omega (\varphi _n)=0$, $\omega (\varphi _0)\in\mathbb N$ and $\gcd (n,\omega (\varphi _0))=1$.
\end{itemize}
Further assume one of the following two possibilities: either
\begin{itemize}
\item[({\sf B1})] $\omega$ is a Dumas valuation of the first kind and we have the inequalities
    $$
    n\omega (\varphi _j)\leq (n-j)\omega (\varphi _0) \quad \text{ for $j=1,\ldots ,n-1$};
    $$
\end{itemize}
or
\begin{itemize}
\item[({\sf B2})] $\omega$ is a Dumas valuation of the second kind and we have the inequalities
    $$
    n\omega (\varphi _j)\geq (n-j)\omega (\varphi _0)  \quad \text{ for $j=1,\ldots ,n-1$}.
    $$
\end{itemize}

Then the polynomial
$$
f=\varphi _nX^n+\cdots +\varphi _1X+ \varphi _0
$$
cannot be written as a product $f=gh$, for some  $g,h\in R[X]$ of degrees at least one; i.e., $f$ is irreducible.
\end{conjecture}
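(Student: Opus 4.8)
The plan is to run a Newton-polygon argument relative to $\omega$, but to carry it out intrinsically so as not to presuppose Dumas' theorem itself (the classical special case we are generalizing). The whole proof will rest on one structural property of a Dumas valuation that I call \emph{strict domination}: if $\omega(x)\neq\omega(y)$, then $\omega(x+y)$ equals $\min(\omega(x),\omega(y))$ in the second-kind case and $\max(\omega(x),\omega(y))$ in the first-kind case. This should follow formally from the two defining relations $\omega(xy)=\omega(x)+\omega(y)$ and the inequality defining each kind ($\omega(x+y)\ge\min$ for the second, $\omega(x+y)\le\max$ for the first): writing $x=(x+y)-y$ and comparing valuations forces the claimed equality. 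By induction, in any finite sum the uniquely $\omega$-extreme summand dominates, so $\omega$ of the sum is that extreme value. This is exactly the non-cancellation phenomenon that makes Newton polygons multiplicative, and it is where I will use that $R$ is a domain, so that products of the extreme nonzero coefficients stay nonzero and retain finite valuation.

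Suppose, for contradiction, that $f=gh$ with $g=\sum_{i=0}^r b_iX^i$ and $h=\sum_{j=0}^s c_jX^j$, where $b_r,c_s\neq0$, $r,s\ge1$, $r+s=n$, and treat the second-kind case first. Put $a=\omega(\varphi_0)\in\mathbb N$ and $\lambda=a/n$, and for any $\phi=\sum\psi_iX^i\in R[X]$ define the tilted minimum $\delta(\phi)=\min\{\omega(\psi_i)+\lambda i:\psi_i\neq0\}$, with $i_\ast(\phi)$ and $i^\ast(\phi)$ the smallest and largest indices attaining it. Hypotheses (\textsf{A}) and (\textsf{B2}) say precisely that each point $(j,\omega(\varphi_j))$ lies on or above the segment from $(0,a)$ to $(n,0)$ of slope $-\lambda$; since $\omega(\varphi_0)=a$ and $\omega(\varphi_n)=0$ place the endpoints on that segment, we get $\delta(f)=a$, attained at $j=0$ and $j=n$. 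Moreover $\gcd(n,a)=1$ forces $\omega(\varphi_j)+\lambda j>a$ for $0<j<n$, because $\frac{(n-j)a}{n}$ is never an integer there while $\omega(\varphi_j)$ is; hence $i_\ast(f)=0$ and $i^\ast(f)=n$.

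Next I multiply out. For $i+j=k$ one has $\omega(b_i)+\omega(c_j)+\lambda k=(\omega(b_i)+\lambda i)+(\omega(c_j)+\lambda j)\ge\delta(g)+\delta(h)$, with equality exactly when both factors are extreme. Among pairs with $i+j=i_\ast(g)+j_\ast(h)$ the extreme pair $(i_\ast(g),j_\ast(h))$ is the unique one, so strict domination gives $\omega(\varphi_{i_\ast(g)+j_\ast(h)})+\lambda(i_\ast(g)+j_\ast(h))=\delta(g)+\delta(h)$, while for smaller $k$ every term strictly exceeds this bound; the same holds at the top with $i^\ast,j^\ast$. Thus $\delta(f)=\delta(g)+\delta(h)$ with $i_\ast(f)=i_\ast(g)+j_\ast(h)$ and $i^\ast(f)=i^\ast(g)+j^\ast(h)$. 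Comparing with the previous paragraph gives $i_\ast(g)=j_\ast(h)=0$ and $i^\ast(g)=r$, $j^\ast(h)=s$. Evaluating $\delta(g)$ at its two extreme indices now yields $\omega(b_0)=\delta(g)=\omega(b_r)+\lambda r=ra/n$, where $\omega(b_r)=0$ comes from $\omega(\varphi_n)=\omega(b_r)+\omega(c_s)=0$ and nonnegativity. Hence $n\,\omega(b_0)=ra$; since $\gcd(n,a)=1$ this forces $n\mid r$, contradicting $0<r<n$.

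The first-kind case (\textsf{B1}) should be handled by the exact dual: replace $\delta$ by the tilted maximum $\Delta(\phi)=\max\{\omega(\psi_i)-\lambda i\}$, use the max-version of strict domination, and note that (\textsf{A})+(\textsf{B1}) now place the points on or below the segment from $(0,a)$ to $(n,0)$, so the relevant upper hull is that single segment; the same bookkeeping yields $n\,\omega(b_0)=ra$ and the same contradiction. The step I expect to be the real obstacle is the non-cancellation at the extreme coefficient of the product: one must be certain the uniquely extreme pair $(i_\ast,j_\ast)$ is not annihilated, which is precisely where the domain hypothesis and the strict-domination lemma are indispensable. A secondary subtlety, relevant to the $\Gamma$-valued generalization, is the final integrality step: over $\mathbb N_0$ the relation $n\,\omega(b_0)=ra$ with $\gcd(n,a)=1$ immediately gives $n\mid r$, but for a general ordered value group one must argue via torsion-freeness that $\omega(b_0)=ra/n$ cannot occur unless $n\mid r$.
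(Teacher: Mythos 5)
Your second-kind argument is correct, and I checked its two delicate points against the paper's own toolkit. First, under \textsf{(A)} and \textsf{(B2)} one indeed has $\omega (\varphi _j)+\lambda j>a$ for $0<j<n$, since equality would force $\omega (\varphi _j)=(n-j)a/n\in \mathbb N_0$, impossible when $\gcd (n,a)=1$; so $\delta (f)=a$ is attained exactly at $j=0$ and $j=n$. Second, among the pairs with $i+j=i_\ast (g)+j_\ast (h)$ the pair $(i_\ast (g),j_\ast (h))$ is the unique one attaining $\delta (g)+\delta (h)$, every other summand strictly exceeding it, and then Lemma \ref{ba1}(iv) --- which is the paper's codification of your ``strict domination''; note it is built into Definition \ref{Dumas} via (D3-1)/(D3-2), so nothing needs to be derived there --- gives $\omega (\varphi _k)+\lambda k=\delta (g)+\delta (h)$ at that index and strict excess below it, dually at the top. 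The endpoint bookkeeping $\omega (b_r)=0$, $\omega (b_0)=\delta (g)=ra/n$, and the integrality step $n\mid ra\Rightarrow n\mid r$ are sound. You should know how this sits relative to the paper: the statement is posed there as a Conjecture, and the paper proves only the cases of Theorem \ref{mainTHM} ($n\leq 7$ first kind, $n\leq 6$ second kind) by a completely different route, namely exhaustive case distinctions on the coefficient equations for $\deg h\in \{ 1,2,3\}$ (Lemmas \ref{degh=1-lemma}, \ref{degh=2-lemma}, \ref{degh=3-lemma}, \ref{2Ddegh=2-lemma}, \ref{2Ddegh=3-lemma}), comparing each $\omega (a_i)$, $\omega (b_j)$ with rational multiples of $\omega (\varphi _0)$ one equation at a time. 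Your tilted extremum condenses all of those comparisons into the single quantity $\omega (\psi _i)+\lambda i$, which is exactly what makes the induction uniform in $n$ and in $\deg h$; it is the classical Dumas--Newton-polygon proof transplanted to the axioms (D0)--(D3), carried out entirely inside $R$, so the paper's stated reservation about field methods (failure of the Gauss-lemma direction over non-UFDs) does not obstruct it --- that concern blocks only the converse implication, since any factorization of $f$ into nonconstant factors over $R$ remains one over the quotient field. If your argument withstands the author's scrutiny, it settles the full Conjecture and supersedes those lemmas.

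One slip to repair: in the first-kind case the tilt must keep the same sign, i.e. $\Delta (\phi )=\max _i\{ \omega (\psi _i)+\lambda i\}$, not $\omega (\psi _i)-\lambda i$. With your minus sign the endpoints of $f$ give $a$ at $i=0$ but $-a$ at $i=n$, the maximum is attained only at $i=0$, and the top-endpoint information that produces $\omega (b_0)=ra/n$ is lost. With the tilt $+\lambda i$, condition \textsf{(B1)} gives $\omega (\varphi _j)+\lambda j\leq a$ with equality exactly at $j=0$ and $j=n$ (same $\gcd$ argument); the max-clause of Lemma \ref{ba1}(iv) yields additivity of $\Delta$ and of its extreme attaining indices verbatim; $\omega (b_r)=0$ again follows from (D0) together with $\omega (b_r)+\omega (c_s)=\omega (\varphi _n)=0$; and the same contradiction $n\mid r$ with $0<r<n$ results. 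Finally, your closing worry about torsion-freeness of the value monoid is moot for the statement as posed, where $\Gamma =\mathbb N_0$, though it is the right thing to check for the $\Gamma$-Dumas generalization floated in Remark \ref{prelim-rmk1}.
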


Observe that the above given conditions ({\sf B1}) and ({\sf B2}) come as no surprise. In fact they are straightforwardly motivated by the mentioned seminal work due to Dumas. In particular a very special case of the condition ({\sf B1}) appears in \cite{PS}, where the authors treat the irreducibility of bivariate polynomials; see also \cite[\S 1.8]{MiS}. But let us emphasize that for their results it is crucial that their polynomials are defined over an algebraically closed base field. The main result of this paper is the following theorem. Let it be said that the case of a Dumas valuation of the first kind is somewhat more difficult to treat, but it turns out that to be a worthwhile effort. At the same time, for a considerable part of our examples for $\omega$ being of the second kind the considered setting is in its essence similar to the classical one due to Eisenstein and Dumas.

\begin{thm}
\label{mainTHM}
The above Conjecture is true in the following cases:
\begin{itemize}
\item[(I)] $\omega$ is a Dumas valuation of the first kind and $n\leq 7$;
\item[(II)] $\omega$ is a Dumas valuation of the second kind and $n\leq 6$.
\end{itemize}
\end{thm}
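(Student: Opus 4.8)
The plan is to argue by contradiction in the spirit of Dumas, but replacing the full theory of Newton polygons (which presupposes a genuine valuation) by the weaker multiplicative and extremal properties that a Dumas valuation is guaranteed to possess. Suppose $f=gh$ with $g=\sum_{i=0}^{r}b_iX^i$ and $h=\sum_{i=0}^{s}c_iX^i$, where $r,s\geq 1$ and $r+s=n$. Put $\alpha_i=\omega(b_i)$, $\beta_i=\omega(c_i)$ and $m=\omega(\varphi_0)$, so $\gcd(n,m)=1$ by (A). Comparing leading and constant coefficients gives $\alpha_r+\beta_s=\omega(\varphi_n)=0$ and $\alpha_0+\beta_0=\omega(\varphi_0)=m$; since $\omega$ takes nonnegative values this forces $\alpha_r=\beta_s=0$, and since $m\in\mathbb N$ we have $\varphi_0\neq 0$, hence $b_0,c_0\neq 0$. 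Thus both factors have a nonzero constant and a nonzero leading term, so all the relevant lattice endpoints are genuinely present.

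First I would record the exact consequences of the two defining properties of $\omega$: multiplicativity $\omega(ab)=\omega(a)+\omega(b)$, and the extremal law $\omega(\sum_k u_k)\geq\min_k\omega(u_k)$ in case (B2) (dually $\leq\max_k\omega(u_k)$ in case (B1)), sharpened to an equality whenever the extremum is attained by a single term. Applied to $\varphi_j=\sum_i b_ic_{j-i}$ these give, in case (B2), $\omega(\varphi_j)\geq\min_i(\alpha_i+\beta_{j-i})$, with equality at any $j$ for which the minimizing index is unique. Encoding this through Newton polygons: assumption (B2) together with the two endpoint values says precisely that the lower convex hull $NP(f)$ of the points $(j,\omega(\varphi_j))$ is the single segment $S$ joining $(0,m)$ to $(n,0)$, of slope $-m/n$; and the displayed inequality says that $NP(f)$ lies on or above the Minkowski sum $NP(g)+NP(h)$, which is itself a convex lattice path from $(0,m)$ to $(n,0)$ lying on or below its chord $S$. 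Case (B1) is the mirror image, with lower hulls replaced by upper hulls and $\min$ by $\max$.

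The decisive step is to force both $NP(g)$ and $NP(h)$ to be single segments of slope $-m/n$: then the left endpoint of $NP(g)$ is $(0,\alpha_0)$ with $\alpha_0=rm/n$, whence $n\mid rm$, and $\gcd(n,m)=1$ gives $n\mid r$, contradicting $1\leq r\leq n-1$. The mechanism is the equality clause. At any abscissa $j$ where $NP(g)+NP(h)$ has a vertex whose defining index $i$ is unique, one gets $\omega(\varphi_j)=\alpha_i+\beta_{j-i}$ exactly, i.e. the Minkowski height at $j$; by (B2) this height is at least $S(j)$, while by convexity it is at most $S(j)$, so in fact $\omega(\varphi_j)=S(j)$ and the Minkowski vertex is pinned onto $S$. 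Propagating these pinned vertices would collapse $NP(g)+NP(h)$ onto $S$, and with it $NP(g),NP(h)$ onto segments of slope $-m/n$.

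The main obstacle, and the reason for the bounds $n\leq 7$ and $n\leq 6$, is exactly that this propagation can stall. Where the extremal index is not unique, i.e. there is a tie among monomials $b_ic_{j-i}$ of equal $\omega$-value, cancellation is possible and $\omega(\varphi_j)$ is no longer determined, so the corresponding vertex need not be pinned. For a genuine valuation the associated graded ring is a domain and such ties never produce a net loss, which yields full additivity $NP(f)=NP(g)+NP(h)$ for every $n$; but a Dumas valuation is assumed only to satisfy the one-sided law with its single-term equality clause, and this is too weak to exclude the tied configurations in general. The remedy for small $n$ is a finite analysis: the possible shapes of $NP(g)$ and $NP(h)$ are convex lattice paths of horizontal lengths $r$ and $s=n-r$ subject to $\alpha_0+\beta_0=m$, and for each splitting $n=r+s$ and each admissible pair of shapes one checks, using whatever pinned vertices are forced together with $\gcd(n,m)=1$, that no configuration other than the twin slope-$(-m/n)$ segments survives. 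For $n\leq 7$ in case (B1) and $n\leq 6$ in case (B2) the list of such shapes is short enough to exhaust; the first kind, where many elements have $\omega$-value $0$ and ties are correspondingly more frequent, is the more delicate of the two, and is the part I would organize and carry out last.
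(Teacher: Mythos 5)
Your argument stops exactly where the theorem begins. Everything through the pinned-vertex mechanism is shared setup in the spirit of Dumas; the entire quantitative content of the statement --- that the collapse onto twin slope-$(-m/n)$ segments can be forced when $n\leq 7$ (first kind), resp. $n\leq 6$ (second kind) --- is delegated to ``for each splitting $n=r+s$ and each admissible pair of shapes one checks\dots'', and this check is never performed: no list of shapes is produced, no configuration is eliminated, and the harder first-kind case is explicitly postponed. In the paper that check \emph{is} the proof: after the reduction to $t=\deg h\leq 3$ (possible since $t\leq s$ and $t+s=n\leq 7$), it occupies Lemmas \ref{degh=1-lemma}, \ref{degh=2-lemma} and \ref{degh=3-lemma} for the first kind and Lemmas \ref{2Ddegh=2-lemma} and \ref{2Ddegh=3-lemma} for the second, each a long multi-case tracking of $\omega$-values through the coefficient equations $\varphi_k=\sum_i a_ib_{k-i}$ (the paper never introduces polygons at all). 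As written, your proposal is a plan, not a proof.

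There is also a misdiagnosis in the one place where you try to explain the bounds on $n$. The feared ``stall'' at tied indices cannot occur at the only abscissas you need: at a vertex of the Minkowski sum of the two hulls, the decomposing index is automatically unique, since two minimizing indices would force both hulls to be affine of a common slope on the intervening intervals, making the sum affine on a neighborhood of that abscissa and contradicting that it is a vertex. (Note also that the equality clauses in (D3-1)/(D3-2) are automatic consequences of (D1) and the one-sided law, via $\omega(-y)=\omega(y)$ from Lemma \ref{ba1}(ii), so $\omega$ of the second kind is literally an $\mathbb N_0$-valued valuation and $-\omega$ is one in the first-kind case; your route is the classical valued-field argument.) With uniqueness at vertices, every vertex of the sum is pinned onto the chord $S$; a convex path from $(0,m)$ to $(n,0)$ lying on or below $S$ with all vertices on $S$ equals $S$; then all edges of both hulls have slope $-m/n$, giving $\omega(b_0)=rm/n\in\mathbb N_0$ and $n\mid r$, a contradiction --- with no restriction on $n$ at all. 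So your own mechanism, pushed through, overshoots Theorem \ref{mainTHM} and would settle the paper's full Conjecture, which sharply contradicts the paper's stance that the general case is open. That tension obligated you either to carry the argument out rigorously or to locate precisely where it fails; leaving it as a heuristic and retreating to an unexecuted finite enumeration leaves the proposal with neither the paper's elementary proof nor a completed polygon proof.
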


Concerning our work observe the following. Given any integral domain $A$ we can define the quotient field $\mathcal Q(A)$. In particular if $A$ is a UFD, then any polynomial over $A$ which is reducible over $\mathcal Q(A)$ will be reducible over $A$ as well. But if $A$ is an arbitrary integral domain, then one in fact does not necessarily have the last conclusion. Here we have in mind for example the rings $A$ which are orders in the ring of integers for an algebraic number field $\mathbb K$, but are not UFDs. In other words $A$ is either a non-maximal order in $\mathbb K$, or it is the maximal order $\mathcal O_{\mathbb K}$ when the class number $h_{\mathbb K}$ is greater than $1$.  Thus our general approach cannot be, at least in some part, simply deduced from the existing methods where the considered polynomials have their coefficients from a field.

Before we provide more details it might be useful to look at two simple but rather motivating examples. Let $A[X]$ be the ring of polynomials in $k\geq 2$ variables $X_i$, with coefficients in an integral domain $A$. Suppose $\varphi _n,\varphi _0 \in A[X]$ are such that the constant term of $\varphi _n$ is nonzero while the constant term of $\varphi _0$ is zero. Let also the lowest homogeneous component of $\varphi _0$ be of order $\ell\in\mathbb N$; see Remark \ref{prelim-rmk2}. Then for either $n\in \{ 2,4\}$ and $\ell$ odd or $n\in \{ 1,3,5\}$ and $\ell$ even, the polynomial
$$
f=\varphi _n(X_1,\ldots ,X_k)Y^n+ \varphi _0(X_1,\ldots ,X_k)
$$
is irreducible in the polynomial ring $A[Y,X_1,\ldots ,X_k]$. On the other hand suppose there is some $j\in \{ 1,\ldots ,k\}$ so that moreover $\varphi _n\in A[X_1,\ldots ,\widehat{X_j},\ldots ,X_k]$; i.e., there is no $X_j$ in $\varphi _n$. Also let the degree of $\varphi _0$ with respect to the variable $X_j$ be equal to the natural number $\ell$. Then for either $n\in \{ 2,4\}$ and $\ell$ odd or $n\in \{ 1,3,5,7\}$ and $\ell$ even the above defined polynomial $f$ is irreducible in $A[Y,X_1,\ldots ,X_k]$.

Let us explain the main idea of our method when we want to show that particular multivariate polynomials are irreducible over some integral domain $A$. Suppose we have some $A$-polynomials $\varphi _0,\varphi _1,\ldots ,\varphi _n$ in $k\geq 1$ variables; i.e., belonging to the ring $R=A[X_1,\ldots ,X_k]$. Then consider the polynomial $f=\sum _{i=0}^n \varphi _iX^i\in R[X]$. We try to find a convenient $\boldsymbol{\nu}\in\mathbb N_0^k$ such that for the Dumas valuation of the first kind $\omega =\deg _{\boldsymbol{\nu}}$ on $R$ we have fulfilled both the conditions {\sf (A)} and {\sf (B1)} of our Conjecture; see Definition \ref{nu}. Then in particular for $n\leq 7$ we know that $f$ cannot be written as a product of two nonconstant polynomials in $A[X,X_1,\ldots ,X_k]$. In order to get the first glimpse over the large class of multivariate polynomials for which the irreducibility problem can be treated in such a way let us present here the following example of this kind; see Corollary \ref{Dumas1-cor} for a more general result, and also both Example \ref{Dumas1-ex} and Remark \ref{Dumas1-rmk}.

\begin{ex}
\label{introd-ex}
Let $A$ be any integral domain, $n>1$ an integer and $\varphi _1,\ldots ,\varphi _{n-1}$ and $\psi _0$ be polynomials of the ring $A[Y,Z]$ satisfying the following. For the $Z$-degree we have $\deg _Z\psi _0\leq \mu -1$, where $\mu \in\mathbb N$ is such that $\gcd (n,\mu )=1$. Also assume that
$\deg _Z\varphi _i \leq \bigl\lfloor \frac{(n-j)\mu}{n}\bigr\rfloor$ for every $1\leq j<n$. Then for $n\leq 7$ and any $a,b\in A^{\times}$ and $t\in\mathbb N_0$, the polynomial
$$
f=aX^n +\varphi _{n-1}(Y,Z)X^{n-1} +\cdots +\varphi _1(Y,Z)X +bY^tZ^{\mu} +\psi _0(Y,Z)
$$
is irreducible in the ring $A[X,Y,Z]$.
\end{ex}

There is a ``dual approach'' via particular Dumas valuations of the second kind. Namely suppose the ring of polynomials $R$ and $f\in R[X]$ are as above. Now we try to find a convenient  $\boldsymbol{\nu}\in\mathbb N_0^k$ such that for the Dumas valuation of the second kind $\omega =\codeg _{\boldsymbol{\nu}}$ on $R$ we have both the conditions {\sf (A)} and {\sf (B2)} of the Conjecture; see Definition \ref{codeg-nu}. The following example illustrates this approach; see Corollary \ref{Dumas2-cor1} and Example \ref{Dumas2-ex1}.

\begin{ex}
\label{introd-ex2}
Let $A$, $n$ and $A[Y,Z]$ be as in the previous example. Suppose we have polynomials $\varphi _1,\ldots ,\varphi _n$ and $\psi _0$ from $A[Y,Z]$ satisfying the following: {\sf (1)} The constant term of $\varphi _n$ is nonzero; {\sf (2)} The constant term of $\varphi _j$ equals zero, for each $1\leq j<n$; {\sf (3)} The total degree of each monomial of $\psi _0$ is greater than $1$. Then for $n\leq 6$ and any $a,b\in A$ that are not both equal to zero, the polynomial
$$
f=\varphi _n(Y,Z)X^n +\varphi _{n-1}(Y,Z)X^{n-1} +\cdots +\varphi _1(Y,Z)X +aY+bZ +\psi _0(Y,Z)
$$
is irreducible in the ring $A[X,Y,Z]$.
\end{ex}

\begin{rmk}
\label{rmk0}
Of course we can consider the above two examples in the case when the polynomials $\varphi _j$ (and $\psi _0$) are in one variable, say $Y$. Thus we have a polynomial $f(X,Y)=\sum _{j=0}^n \varphi _j(Y)X^j$ in which we are interested to know whether it is irreducible over $A$ or not. But observe that $Y$ is a prime element of the ring $R=A[Y]$ and therefore the conditions {\sf (1)--(3)} of the latter example boil down to the following: {\sf (1)} $\ord _Y\varphi _n=0$; {\sf (2)} $\ord _Y\varphi _j\geq 1$ for each $1\leq j<n$; {\sf (3)} $\ord _Y\varphi _0=1$. In other words, at least when $A$ is a UFD, here works the classical Eisenstein criterion; or, in a more general setting, the before formulated Eisenstein-Dumas criterion. This means that our method via codegree function in fact gives something new only for polynomials in three and more variables.
\end{rmk}

The second main application of our work is to the reducibility problem for polynomials in one variable with coefficients from an order $R$ in an algebraic number field. Here we need to have a large enough supply of prime elements in $R$; i.e., elements of the set $\Pr R$ of all primes in $R$. As a useful result related to that, which is interesting in its own right, we have Lemma \ref{Orders-lem1} proved in \cite{Sch}. In Section \ref{Orders} of the paper we give several interesting results which explain how in various settings one can show that a particular polynomial $f\in R[X]$, of degree less than $7$, is irreducible over $R$. The following theorem collects the three main results of that last section; see Theorems \ref{Orders-thm1} and \ref{Orders-thm2} and Proposition \ref{Orders-prop2}.

\begin{thm}
\label{sect4-mainTHM}
Let $\mathbb K$ be an algebraic number field and $R$ be an order in $\mathbb K$. Also let $N=N_{\mathbb K|\mathbb Q}: \mathbb K\to \mathbb Q$ be the corresponding norm map. Suppose 
$$
f=c_nX^n+ \cdots +c_1X+c_0
$$
is a polynomial in $R[X]$ of degree $n\leq 6$. 
\begin{enumerate}
	\renewcommand{\labelenumi}{(\roman{enumi})}
 \item Suppose there is some $\mathfrak p\in \Pr R$ such that $\ord _{\mathfrak p}(c_n) 
       =0$,  $\gcd (n,\ord _{\mathfrak p}(c_0))=1$ and $(n-j)\ord _{\mathfrak p}(c_0) \leq n\ord _{\mathfrak p}(c_j)$ for $j=1,\ldots ,n-1$. Then $f$ is irreducible over $R$.
 \item Let $\mathbb K$ be moreover a normal field of degree $m$ with its Galois group 
       $G_{\mathbb K}$ and suppose $R$ is $G_{\mathbb K}$-invariant. (For example this holds for any order $R$ in any quadratic field $\mathbb K$, or for $R=\mathcal O_{\mathbb K}$ when $\mathbb K$ is any normal field.) Let $p\in \Pr \mathbb Z$ be  such that there is some $\mathfrak p\in R$ satisfying $N(\mathfrak p)=p$; and so $\mathfrak p\in \Pr R$. Further suppose that $\gcd (n,m)=1$ and for $\ell =\ord _p(c_0)$ we have the following: {\sf (1)} $\ord _p(N(c_n))= 0= \ord _p(N(c_0/p^{\ell}))$; {\sf (2)} $\gcd (n,\ell )=1$; {\sf (3)} $(n-j)\ell \leq n\ord _p(c_j)$ for $j=1,\ldots , n-1$. Then $f$ is irreducible over $R$.
 \item Let $\delta \in\mathbb Z$ be cube-free and $\alpha$ be the real cube root of
       $\delta$. Consider the order $R=\mathbb Z[\alpha ]$ in the pure cubic field $\mathbb K=\mathbb Q(\alpha )$. Assume that $3\neq p\in \Pr \mathbb Z$ is such that $\gcd (p,\delta )=1$ and there is some $\mathfrak p\in \Pr R$ satisfying $N(\mathfrak p)=p$. Also suppose that if $\mathfrak p=a+b\alpha +c\alpha ^2$, for some $a,b,c\in \mathbb Z$, then we have that $p\notdiv ab-c^2 \delta$ or $p\notdiv ac-b^2$. Further let $\ell$ be as in \textup{(ii)} and assume the conditions {\sf (1)}\! -- {\sf (3)} as there hold here as well. Then $f$ is irreducible over $R$.
\end{enumerate}
\end{thm}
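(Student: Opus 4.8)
The plan is to reduce all three statements to case (II) of Theorem~\ref{mainTHM} by exhibiting, for the relevant prime $\mathfrak p$, the order function $\omega=\ord_{\mathfrak p}$ as a Dumas valuation of the second kind for which the hypotheses {\sf (A)} and {\sf (B2)} of the Conjecture hold. Since $R$ is a Noetherian domain and $\mathfrak p$ is a prime element, $\ord_{\mathfrak p}$ is additive (if $\mathfrak p^{s}\,\|\,x$ and $\mathfrak p^{t}\,\|\,y$ then $\mathfrak p^{s+t}\,\|\,xy$, because $\mathfrak p$ is prime), hence a genuine Dumas valuation of the second kind. Part (i) is then immediate: its hypotheses are literally {\sf (A)} and {\sf (B2)} for $\omega=\ord_{\mathfrak p}$ (note that $\gcd(n,\ord_{\mathfrak p}(c_0))=1$ with $n>1$ forces $\ord_{\mathfrak p}(c_0)\in\mathbb N$), so for $n\leq 6$ the claim follows at once.

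For (ii) and (iii) the hypotheses are instead phrased through $\ord_p$ and the norm $N$, so the first task is to set up a dictionary between $\ord_{\mathfrak p}$ and $\ord_p\circ N$. The basic input is the index formula $[R:xR]=|N(x)|$, valid for every order because $R$ is free of rank $m$ over $\mathbb Z$ and multiplication by $x$ has determinant $N(x)$. In particular $N(\mathfrak p)=p$ gives $R/\mathfrak p R\cong\mathbb F_p$, so $\mathfrak p R$ is maximal and $\mathfrak p\in\Pr R$; moreover $\mathfrak p\mid x$ forces $p=[R:\mathfrak p R]\mid[R:xR]=|N(x)|$. Setting $e=\ord_{\mathfrak p}(p)$ and writing $c_0=p^{\ell}d$ with $p\nmid d$, condition {\sf (1)} gives $p\nmid N(c_n)$ and $p\nmid N(d)$, whence $\ord_{\mathfrak p}(c_n)=0$ and $\ord_{\mathfrak p}(d)=0$, so that $\ord_{\mathfrak p}(c_0)=\ell e$; similarly $p^{\ord_p(c_j)}\mid c_j$ yields $\ord_{\mathfrak p}(c_j)\geq e\,\ord_p(c_j)$, and multiplying condition {\sf (3)} by $e$ produces exactly {\sf (B2)} for $\ord_{\mathfrak p}$. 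Thus everything comes down to $\gcd(n,\ell e)=1$, and since $\gcd(n,\ell)=1$ is condition {\sf (2)}, the real point is to prove $\gcd(n,e)=1$.

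In case (ii) normality settles this cheaply. As $R$ is $G_{\mathbb K}$-invariant, every conjugate $\sigma(\mathfrak p)$ lies in $R$ and is again prime, and $p=N(\mathfrak p)=\prod_{\sigma\in G_{\mathbb K}}\sigma(\mathfrak p)$, so by additivity $e=\sum_{\sigma}\ord_{\mathfrak p}(\sigma(\mathfrak p))$. Each summand is $0$ or $1$, equal to $1$ exactly when $\sigma(\mathfrak p)$ is an associate of $\mathfrak p$; the set of such $\sigma$ is a subgroup of $G_{\mathbb K}$, so $e$ divides $m=|G_{\mathbb K}|$. Then $\gcd(n,m)=1$ yields $\gcd(n,e)=1$, and Theorem~\ref{mainTHM}(II) applies.

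Case (iii) is where the genuine work lies, for the pure cubic field $\mathbb K=\mathbb Q(\alpha)$ is not normal and the factorization $p=\prod_{\sigma}\sigma(\mathfrak p)$ has no counterpart inside $R=\mathbb Z[\alpha]$; this is also why the coprimality $\gcd(n,m)=1$ of (ii) is dropped and one must instead establish $e=1$ outright. Using $\alpha^3=\delta$ one computes the cofactor
\[
M=\frac{p}{\mathfrak p}=(a^2-\delta bc)+(\delta c^2-ab)\alpha+(b^2-ac)\alpha^2\in\mathbb Z[\alpha],
\]
so that $\mathfrak p\mid p$ and $\ord_{\mathfrak p}(p)=1+\ord_{\mathfrak p}(M)$; consequently $e=1$ amounts to $\ord_{\mathfrak p}(M)=0$. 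The arithmetic hypothesis that $p$ does not divide both $ab-\delta c^2$ and $ac-b^2$ (the $\alpha$- and $\alpha^2$-coordinates of $M$) is what enters the elementary divisibility analysis that secures $e=1$, using $N(\mathfrak p)=p$ and the explicit norm form $N(a+b\alpha+c\alpha^2)=a^3+\delta b^3+\delta^2 c^3-3\delta abc$. Once $e=1$ is in hand, $\ord_{\mathfrak p}(c_0)=\ell$ together with $\gcd(n,\ell)=1$ complete {\sf (A)} and {\sf (B2)}, and Theorem~\ref{mainTHM}(II) finishes the proof. I expect the main obstacle to be precisely this cubic computation: pinning down $\ord_{\mathfrak p}(p)=1$ in $\mathbb Z[\alpha]$ from the coordinatewise condition on $\mathfrak p$ by direct manipulation of the norm form, rather than by invoking the ramification theory of the maximal order, and verifying that the requisite case distinctions are exhaustive.
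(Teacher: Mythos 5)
Your reduction of all three parts to Theorem~\ref{mainTHM}(II) via $\omega =\ord _{\mathfrak p}$ is exactly the paper's strategy, and parts (i) and (ii) are correct. For (i) the paper (Theorem~\ref{Orders-thm1}) likewise just observes that the hypotheses are {\sf (A)} and {\sf (B2)} for $\ord _{\mathfrak p}$, a Dumas valuation of the second kind by Corollary~\ref{corU2}. For (ii) the paper (Theorem~\ref{Orders-thm2}) also establishes $\ord _{\mathfrak p}(c_0)=e\ell$ with $e\mid m$ and $\ord _{\mathfrak p}(c_j)\geq e\,\ord _p(c_j)$; it obtains $e\mid m$ by partitioning the conjugates $\sigma (\mathfrak p)$ into association classes of equal cardinality $e$ with $ke=m$, whereas you take $e=\ord _{\mathfrak p}(p)=\sum _{\sigma}\ord _{\mathfrak p}(\sigma (\mathfrak p))$ and note that $\{ \sigma \mid \sigma (\mathfrak p)\sim \mathfrak p\}$ is a subgroup of $G_{\mathbb K}$. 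That is an equivalent, and if anything cleaner, route to the same count (it sidesteps any fuss about repeated conjugates when $\mathbb Q(\mathfrak p)$ is a proper subfield), and your index-formula observation that $N(\mathfrak p)=p$ forces $R/\mathfrak p R\cong \mathbb F_p$ even recovers Lemma~\ref{Orders-lem1} independently, where the paper cites \cite{Sch}.

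Part (iii), however, has a genuine gap, and you flag it yourself: you never prove $\ord _{\mathfrak p}(p)=1$, i.e.\ that $\mathfrak p \notdiv M$ for your (correct) cofactor $M=p/\mathfrak p$. This is the entire content of (iii) beyond the formal reduction --- it is Lemma~\ref{Orders-lem2} of the paper --- and it is not a routine verification. The paper's argument: if $p=\mathfrak p^2w$ with $w=x_0+y_0\alpha +z_0\alpha ^2$, Cramer's rule applied to the multiplication-by-$\mathfrak p^2$ matrix gives $\Delta _0=p^2$ and in particular $x_0=a+3\delta (ab-c^2\delta )(ac-b^2)/p$; integrality of $x_0$ together with $p\neq 3$ and $\gcd (p,\delta )=1$ forces $p$ to divide one of $ab-c^2\delta$, $ac-b^2$. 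This is precisely where those two hypotheses enter, and it also resolves your worry about exhaustiveness of cases: the possibility that $p$ divides neither factor is excluded at the outset rather than treated separately. Integrality of $y_0,z_0$ then yields $p\mid \Sigma _1$ and $p\mid \Sigma _2$, and the identities $\Sigma _1=2(a^2-bc\delta )(ab-c^2\delta )-\delta (ac-b^2)^2$ and $\Sigma _2=-2(a^2-bc\delta )(ac-b^2)+(ab-c^2\delta )^2$ propagate divisibility from either factor to the other, contradicting hypothesis (\ref{Orders-prop2-label1}). Without some such computation your proposal proves (iii) only conditionally on $e=1$; everything after that point ($\ord _{\mathfrak p}(c_0)=\ell$ via condition {\sf (1)}, $\gcd (n,\ell )=1$, and the inequalities) is fine and matches the paper's Proposition~\ref{Orders-prop2}.
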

\noindent 
Observe in particular that (i) of the theorem gives a significant generalization of the before stated Eisenstein-Dumas criterion; of course, when $n\leq 6$. Besides we believe that one can obtain some other useful specific results like the statement (iii) of the theorem is.

Throughout the paper the following notation will be fixed. Let $f\in R[X]$ be as in the statement of the above Conjecture. Our task is to show that under the given assumptions there are no nonconstant polynomials  $g,h\in R[X]$ such that $f=gh$. We will do that via some auxiliary results that will sometimes be formulated in a more general form than we actually need for a proof of our theorem. (But we hope that this might give an idea how to generalize our approach and then perhaps obtain a complete proof of the above formulated Conjecture.) For that purpose in the sequel we will always assume that
$$
1\leq \deg h=t\leq s=\deg g,
$$
and write
\begin{equation}
\label{gh-poly}
g=a_sX^s+\cdots +a_1X+a_0 \quad \text{and} \quad h=b_tX^t+\cdots +b_1X+b_0,
\end{equation}
for some $a_i,b_j\in R$. As the starting point of our arguments we have the system of equalities for the polynomial coefficients arising from the assumed polynomial equality $f=gh$. That is we consider the equalities
$$
\varphi _0 =a_0b_0, \quad \varphi _1 = a_0b_1+a_1b_0, \quad \ldots \quad , \quad \varphi _{n-1} = a_{s-1}b_t + a_sb_{t-1}.
$$
Our main job is to quite precisely keep track about the values of a Dumas valuation of these polynomial coefficients. As it will be seen,  this is a rather complicated task.

\begin{rmk}
\label{rmk1}
As it is customary we always assume that the coefficients $a_i$ for $i>s$ or $i<0$, and $b_j$ for $j>t$ or $j<0$, are equal to zero.
\end{rmk}

The paper is organized as follows. In Sect$.$ \ref{prelim} we introduce the notions of $\Gamma$-Dumas valuations of the first kind and of the second kind. We also show that the class of these monoid homomorphisms is rather large. In Sect$.$ \ref{Dumas1} we prove Theorem \ref{THMDumas1kind} which in fact gives the part (I) of Theorem \ref{mainTHM}. We do this via three lemmas: Lemma \ref{degh=1-lemma}, Lemma \ref{degh=2-lemma} and Lemma \ref{degh=3-lemma}. Although their proofs are with elementary methods, they are quite involved and tedious. In particular the second one of these lemmas presents a more general result than we need for the proof of the mentioned part (I). In Sect$.$ \ref{Dumas2}  we prove Theorem \ref{THMDumas2kind} which settles the part (II) of Theorem \ref{mainTHM}. Similarly as for Theorem \ref{THMDumas1kind} we give its proof via three lemmas; the before proved Lemma \ref{degh=1-lemma} and two new ones, Lemma \ref{2Ddegh=2-lemma} and Lemma \ref{2Ddegh=3-lemma}. Again the second lemma states a more general result than we actually need. Besides let it be said that we can prove a stronger version of Lemma \ref{2Ddegh=3-lemma}. Namely as a fact we have that for a polynomial $f\in R[X]$ of degree $n=7$ there are no $g,h\in R[X]$ such that $f=gh$ and $\deg h=3$. In other words the part (II) of Theorem \ref{mainTHM} is valid for $n=7$ as well. But the argument we have for that is quite long and in a sense similar to the one for Lemma \ref{degh=3-lemma}. So we preferred not to include it in the paper. As it was mentioned before, in Sect$.$ \ref{Orders} we show how to use the part (II) of Theorem \ref{mainTHM} when we consider certain kind of polynomials in one variable having coefficients from some order of an algebraic number field. There we prove the above Theorem \ref{sect4-mainTHM}, via the mentioned two theorems and one proposition. Besides we illustrate our results by some interesting examples. Let us emphasize that all our arguments are purely algebraic.

At the end of this introduction a few more remarks are in order. Note that our work presented here is in a sense in accordance with the general philosophy that a significant portion of randomly chosen polynomials (in particular of high degrees) should be of irreducible ones. For more precise statements and some interesting recent results see for example \cite{B-SoK1,BV,OP}, where polynomials in one variable over $\mathbb Z$ were considered. For a related facts concerning bivariate $\mathbb Z$-polynomials see \cite{B-SoK2}.

As we already said it is interesting to understand the set of irreducible elements $\Irr R$ of a given integral domain $R$. And as a very important case one can take $R$ to be the ring of polynomials in finitely many variables over some integral domain $A$. Here it would be even more interesting to understand the subset of primes $\Pr R\subseteq \Irr R$. (Observe that $\Irr R=\Pr R$ if $A$ is a UFD.) Some other basic results toward this general problem were obtained in \cite{Sch}.

\section{Preliminaries on Dumas valuations}
\label{prelim}

Suppose $(\Gamma ,+,\succeq )$ is a totally ordered monoid which is a sub-monoid of the additive group $(\Pi, +)$ of some ring $(\Pi ,+,\cdot )$, with  zero $0=0_{\Pi}$. Moreover suppose that $x\succ 0$ for each nonzero $x\in\Gamma$ and that the total order $\succeq$ is monomial; i.e., if $u,v,w\in\Gamma$ are such that $u\succeq v$, then $u+w\succeq v+w$. Here the symbol $\succ$, of the strict order, has the standard meaning. Next, given $\Gamma$ as above, define the set $\Gamma ^{-\infty}=\Gamma \cup \{ -\infty\}$ and consider it as a totally ordered additive monoid, where for the symbol $-\infty$ we have both $k\succ -\infty $ for every $k\in \Gamma$ and $(-\infty )+k=-\infty$ for every $k\in \Gamma ^{ -\infty}$.  Analogously we consider a totally ordered additive monoid $\Gamma ^{+\infty}=\Gamma \cup \{ +\infty\}$, where now for the symbol $+\infty$ we have both $+\infty \succ k$ for every $k\in \Gamma$ and $(+\infty )+k=+\infty$ for every $k\in \Gamma ^{+\infty}$.

\begin{ex}
\label{prelim-Ex1}
Let $d\in\mathbb N$ and define the standard localization $\Pi =\mathbb Z_{(d)}$ at the element $d$. Consider the sub-monoid $\Gamma =\frac{1}{d}\mathbb N_0=\{ k/d \mid k\in\mathbb N_0\}$ of $(\Pi ,+)$, equipped by the standard total order $\geq$. The extended totally ordered monoids $\Gamma ^{\pm \infty}$ will be of special interest for us, with the ``basic cases'' $\Gamma ^{\pm \infty}=\mathbb N_0^{\pm \infty}$.
\end{ex}

The notion of a valuation on a field is standard in algebra; see \cite{FT}. The following is in fact its generalization.

\begin{defin}
\label{Dumas}
Assume $\Gamma =(\Gamma ,+,\succeq )$ is a totally ordered monoid where the order $\succeq$ is monomial. Let $R$ be an integral domain. A map
$$
\omega :R\to \Gamma ^{ -\infty}
$$
will be called a {\em $\Gamma$-Dumas valuation of the first kind} if for all $x,y\in R$ the following hold:
\begin{itemize}
\item[(D0)\;\; ] $\omega (x)\succeq 0_{\Gamma}$ if $x\neq 0$;
\item[(D1)\;\; ] $\omega (xy)=\omega (x) +\omega (y)$;
\item[(D2-1)] $\omega (x)=-\infty$ if and only if $x=0$;
\item[(D3-1)] $\omega (x+y)\preceq \max \{ \omega (x),\omega (y)\}$, where we have an equality if $\omega (x) \neq \omega (y)$.
\end{itemize}

A map
$$
\omega :R\to \Gamma ^{ +\infty}
$$
will be called a {\em $\Gamma$-Dumas valuation of the second kind} if for all $x,y\in R$ the above conditions (D0), (D1) and also the following two conditions hold:
\begin{itemize}
\item[(D2-2)] $\omega (x)=+\infty$ if and only if $x=0$;
\item[(D3-2)] $\omega (x+y)\succeq \min \{ \omega (x),\omega (y)\}$, where we have an equality if $\omega (x) \neq \omega (y)$.
\end{itemize}

Unless it is necessary to be precise, any of the two defined maps will be called in short a {\em $\Gamma$-Dumas valuation}. In particular when $\Gamma =\mathbb N_0$ we will say that the corresponding $\omega$ is a {\em Dumas valuation}.
\end{defin}

\begin{rmk}
\label{prelim-rmk1}
(1) Let $R$ be any integral domain. If we put $\omega (0)=-\infty$ and $\omega (x)=0_{\Gamma}$ for every $x\in R^{\times}$, then $\omega$ is a $\Gamma$-Dumas valuation of the first kind. Similarly by $\omega (0)=+\infty$ we obtain a $\Gamma$-Dumas valuation of the second kind. Such $\Gamma$-Dumas valuations, that are not of particular interest, will be called {\em trivial} ones. Therefore we will always assume that our $\Gamma$-Dumas valuation is {\em nontrivial}; i.e., that there is at least one $x_0\in R^{\times}$ such that $\omega (x_0)\neq 0_{\Gamma}$.

(2) Suppose $\Pi$ is a ring of characteristic zero. Let also $\Gamma =(\Gamma ,+)$ be a sub-monoid of $(\Pi ,+)$ containing the identity $1_{\Pi}$, and assume there is a monomial total ordering $\succeq$ on $\Gamma$. Again suppose that $x\succ 0$ for each nonzero $x\in\Gamma$. Next observe that $\imath :\mathbb Z\to \Pi$, $\imath (z)=z1_{\Pi}$, is a monomorphism of rings with identity; and thus we can in particular assume that the additive monoid $(\mathbb N_0,+)$ is a sub-monoid of $\Gamma$. Having such setting one could easily formulate a generalization of our Conjecture so that $\omega$ is now a $\Gamma$-Dumas valuation. Further details are left to the interested reader.
\end{rmk}

It might be useful to note here the following simple fact.

\begin{lem}
\label{ba0}
Let $\omega$ be a Dumas valuation on an integral domain $R$ and let $d\in\mathbb N$ be arbitrary. Define $\Gamma =\frac{1}{d}\mathbb N_0$ as in \textup{Example \ref{prelim-Ex1}}. Then the map $\omega _1=\frac{1}{d}\omega$, defined by $\omega _1(x)=\omega (x)/d$, is a $\Gamma$-Dumas valuation on $R$ of the same kind as $\omega$ is.
\end{lem}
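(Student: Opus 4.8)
The plan is to verify that $\omega_1 = \frac{1}{d}\omega$ inherits each of the four defining axioms from $\omega$, checking the two kinds separately only where the axioms differ. Since $\omega$ is a Dumas valuation, it takes values in $\mathbb{N}_0^{-\infty}$ or $\mathbb{N}_0^{+\infty}$, and dividing by $d$ lands us precisely in $\Gamma^{-\infty}$ or $\Gamma^{+\infty}$ for $\Gamma = \frac{1}{d}\mathbb{N}_0$, with the convention that $(\pm\infty)/d = \pm\infty$. The essential observation driving the whole verification is that division by the fixed positive integer $d$ is a strictly order-preserving map from $\mathbb{N}_0^{\pm\infty}$ onto $\Gamma^{\pm\infty}$ and is additive in the appropriate (monoid) sense.

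First I would record that $\Gamma = \frac{1}{d}\mathbb{N}_0$, with the standard order $\geq$ inherited from $\mathbb{Z}_{(d)}$, is a totally ordered monoid whose order is monomial and satisfies $x \succ 0$ for every nonzero $x \in \Gamma$; this is exactly the content of Example~\ref{prelim-Ex1}, so $\Gamma$ is a legitimate target for a $\Gamma$-Dumas valuation. Next I would check the axioms in turn. For (D0), if $x \neq 0$ then $\omega(x) \geq 0$, hence $\omega_1(x) = \omega(x)/d \geq 0$ as well. For (D1), from $\omega(xy) = \omega(x) + \omega(y)$ one divides through by $d$ to obtain $\omega_1(xy) = \omega_1(x) + \omega_1(y)$, the convention on $\pm\infty$ making this hold also when either factor is zero. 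For the infinity axiom, (D2-1) or (D2-2) transfers immediately since $\omega(x) = \pm\infty \iff \omega_1(x) = \pm\infty \iff x = 0$.

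The only axiom requiring a moment's care is the ultrametric-type inequality, and this is where the two kinds must be handled separately. For $\omega$ of the first kind I would use that $a \mapsto a/d$ is order-preserving, so $\omega(x+y) \preceq \max\{\omega(x), \omega(y)\}$ yields $\omega_1(x+y) \preceq \max\{\omega_1(x), \omega_1(y)\}$, and since division by $d$ is strictly order-preserving (hence injective) it carries the condition ``$\omega(x) \neq \omega(y)$'' to ``$\omega_1(x) \neq \omega_1(y)$'' and preserves the equality clause, giving (D3-1). For $\omega$ of the second kind the identical argument with $\min$ in place of $\max$ establishes (D3-2). Finally I would note that nontriviality is preserved: if $\omega(x_0) \neq 0$ for some $x_0 \in R^\times$, then $\omega_1(x_0) = \omega(x_0)/d \neq 0$ as well.

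I do not expect any genuine obstacle here; the proof is a routine transfer of axioms under an order-isomorphism, and the only point deserving explicit mention is that division by $d$ is \emph{strictly} monotone, which is what guarantees that the sharp equality cases in (D3-1) and (D3-2) survive unchanged. The statement that $\omega_1$ is of ``the same kind'' as $\omega$ is immediate from the fact that we have checked (D0), (D1), and the corresponding pair of axioms (D2-$i$), (D3-$i$) for the same index $i$ throughout.
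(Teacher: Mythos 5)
Your proof is correct and is exactly the routine axiom-by-axiom verification the paper has in mind: the lemma is stated there as a ``simple fact'' with its proof omitted, and your argument (division by $d$ is additive and strictly order-preserving on $\mathbb N_0^{\pm\infty}$, so (D0), (D1), (D2-$i$), (D3-$i$) transfer verbatim, with Example \ref{prelim-Ex1} guaranteeing $\Gamma$ is an admissible value monoid) supplies precisely the omitted details, including the one point worth flagging, namely strict monotonicity for the equality clauses in (D3-1)/(D3-2).
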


An easy proof of the following basic lemma is also omitted. Let us emphasize that in particular its last three claims will be used many times in the sequel, often without explicit referring to them. Besides observe that if $\Gamma$ is included in some ring $\Pi$ as it was above, then the lemma is valid for $\Gamma$-Dumas valuations as well.

\begin{lem}
\label{ba1}
Let $\omega$ be a Dumas valuation defined on an integral domain $R$.
\begin{enumerate}
 \renewcommand{\labelenumi}{(\roman{enumi})}
 \item By considering $R$ as a multiplicative monoid, the map $\omega$ is a homomorphism of monoids satisfying $\omega (1)=0$.
 \item For every $x\in R$ we have $\omega (x)=\omega (-x)$.
 \item For $x,y\in R^{\times}$ we have $\omega (xy)\geq \max \{ \omega (x),\omega (y)\}$.
 \item Let $x_1,\ldots ,x_t,y\in R$ be some elements. If $\omega$ is a Dumas valuation of the first kind, then
         $$
         \omega (x_1+\cdots +x_t) \leq \max \{ \omega (x_1),\ldots ,\omega (x_t)\} .
         $$
         Also if we have $\omega (x_i)<\omega (y)$ for every $1\leq i\leq t$, then
     \begin{equation}
     \label{ba1-label1}
     \omega (x_1+\cdots +x_t+y)=\omega (y).
     \end{equation}
     Further if $\omega$ is a Dumas valuation of the second kind, then
       $$
       \omega (x_1+\cdots +x_t) \geq \min \{ \omega (x_1),\ldots ,\omega (x_t)\} .
       $$
       Also if we have $\omega (x_i)>\omega (y)$ for every $1\leq i\leq t$, then again we have \textup{(\ref{ba1-label1})}.
\end{enumerate}
\end{lem}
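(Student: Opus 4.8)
The plan is to derive everything directly from the defining axioms (D0)--(D1) together with the two variants (D3-1)/(D3-2), exploiting that the value monoid $\Gamma$ is a sub-monoid of the additive group of a ring $\Pi$, so that cancellation is available, and that its order $\succeq$ is monomial with all nonzero elements strictly positive.

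For (i), since $R$ is an integral domain we have $1\neq 0$, so by (D2-1) (resp. (D2-2)) the value $\omega(1)$ lies in $\Gamma$, not at $\pm\infty$. Applying (D1) to $1=1\cdot 1$ gives $\omega(1)=\omega(1)+\omega(1)$ inside $\Pi$; cancelling yields $\omega(1)=0_{\Gamma}$. Together with (D1) this says precisely that $\omega$ is a homomorphism from the multiplicative monoid $(R,\cdot)$ to the additive monoid $\Gamma^{\pm\infty}$. For (ii), I would first show $\omega(-1)=0_{\Gamma}$: from $(-1)^2=1$ and (D1) we get $\omega(-1)+\omega(-1)=\omega(1)=0_{\Gamma}$, while (D0) forces $\omega(-1)\succeq 0_{\Gamma}$; if this inequality were strict, then monomiality would give $\omega(-1)+\omega(-1)\succ\omega(-1)\succ 0_{\Gamma}$, contradicting $\omega(-1)+\omega(-1)=0_{\Gamma}$. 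Hence $\omega(-1)=0_{\Gamma}$, and (D1) gives $\omega(-x)=\omega(-1)+\omega(x)=\omega(x)$ for every $x\neq 0$, the case $x=0$ being trivial since $-0=0$.

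Part (iii) is then immediate: for $x,y\in R^{\times}$ we have $\omega(y)\succeq 0_{\Gamma}$ by (D0), so by (D1) and monomiality $\omega(xy)=\omega(x)+\omega(y)\succeq\omega(x)+0_{\Gamma}=\omega(x)$, and symmetrically $\omega(xy)\succeq\omega(y)$; hence $\omega(xy)\geq\max\{\omega(x),\omega(y)\}$. Part (iv) is the only part requiring an induction, though it remains routine. For a valuation of the first kind the bound $\omega(x_1+\cdots+x_t)\preceq\max_i\omega(x_i)$ follows by induction on $t$ from the inequality in (D3-1), writing $x_1+\cdots+x_t=(x_1+\cdots+x_{t-1})+x_t$ and using $\max\{\max\{a_1,\ldots,a_{t-1}\},a_t\}=\max\{a_1,\ldots,a_t\}$ (the case $t=1$ is trivial and $t=2$ is exactly (D3-1)); occurrences of $-\infty$ among the $\omega(x_i)$ cause no trouble since $\Gamma^{-\infty}$ is totally ordered. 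For the refinement, set $S=x_1+\cdots+x_t$; since each $\omega(x_i)\prec\omega(y)$, the finite maximum satisfies $\max_i\omega(x_i)\prec\omega(y)$, whence $\omega(S)\preceq\max_i\omega(x_i)\prec\omega(y)$, so in particular $\omega(S)\neq\omega(y)$ and the equality clause of (D3-1) gives $\omega(S+y)=\max\{\omega(S),\omega(y)\}=\omega(y)$. The second-kind statements follow by the identical argument with $\preceq,\max$ replaced by $\succeq,\min$ and (D3-1) replaced by (D3-2).

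There is no genuine obstacle here; the only points demanding a little care are justifying $\omega(1)=0_{\Gamma}$ and $\omega(-1)=0_{\Gamma}$ from positivity and monomiality rather than from any torsion-freeness of $\Gamma$, and noting in the refinement of (iv) that the strict inequalities $\omega(x_i)\prec\omega(y)$ propagate to the finite maximum, which is exactly what licenses invoking the equality clause of (D3-1)/(D3-2). As the remark preceding the lemma indicates, all of these arguments use only properties of the ambient ring $\Pi$ and its order, so the same proof applies verbatim to $\Gamma$-Dumas valuations.
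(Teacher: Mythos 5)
Your proof is correct, and in fact the paper omits the proof of this lemma entirely (``An easy proof of the following basic lemma is also omitted''), so your argument simply supplies the intended routine verification from the axioms (D0)--(D3). You handle correctly the only two points that need genuine care: deriving $\omega(1)=0_{\Gamma}$ and $\omega(-1)=0_{\Gamma}$ from cancellation in $(\Pi,+)$ together with positivity and monomiality of the order (rather than any torsion-freeness of $\Gamma$), and propagating the strict inequalities $\omega(x_i)\prec\omega(y)$ (resp.\ $\succ$) to the finite maximum (resp.\ minimum) before invoking the equality clause of (D3-1) (resp.\ (D3-2)).
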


Our next task is to show how to obtain some interesting classes of Dumas valuations on certain classes of integral domains. First we need some preparation on rings of polynomials. Given a natural number $n\geq 2$ we consider the set of all multi-indices $\mathbb N_0^n$ and then fix a particular $\boldsymbol{\nu}=(\nu _1,\ldots ,\nu _n)\in \mathbb N_0^n$. Define the {\em $\boldsymbol{\nu}$-weight} of any $I=(i_1,\ldots ,i_n)\in \mathbb N_0^n$ as an integer
$$
|I|_{\boldsymbol{\nu}}=\nu _1i_1+\cdots +\nu _ni_n\in \mathbb N_0.
$$
For multi-indices $I$ as above and $J=(j_1,\ldots ,j_n)$ we put $I\prec _{\boldsymbol{\nu}}J$ if either $|I|_{\boldsymbol{\nu}}<|J|_{\boldsymbol{\nu}}$ or $|I|_{\boldsymbol{\nu}}=|J|_{\boldsymbol{\nu}}$ and there is some $t\in\{ 1,\ldots ,n\}$ such that $$
i_1=j_1, \ldots  , i_{t-1}=j_{t-1} \quad \text{and} \quad i_t<j_t.
$$
If $I=J$ or $I\prec _{\boldsymbol{\nu}} J$, we write $I\preceq _{\boldsymbol{\nu}}J$. Note that $(\mathbb N_0^n,\preceq _{\boldsymbol{\nu}})$ is a total ordering. And it is monomial; i.e., for $I,J,K\in \mathbb N_0^n$ such that $I\preceq _{\boldsymbol{\nu}}J$ we have $I+K\preceq _{\boldsymbol{\nu}}J+K$ as well.

Now let $A$ be a ring and $A[X]=A[X_1,\ldots,X_n]$ the corresponding ring of polynomials in $n$ variables $X_i$. Given a multi-index $I=(i_1,\ldots ,i_n)$ we write $X^I$ instead of $X_1^{i_1}\cdots X_n^{i_n}$, and so every $f\in A[X]$ can be written as $f=\sum _Ia_IX^I$ for some coefficients $a_I\in A$. The following definition generalizes the standard notion of the total degree for polynomials.

\begin{defin}
\label{nu}
For $n\geq 2$ and fixed $\boldsymbol{\nu}\in\mathbb N_0^n$ define the {\em $\boldsymbol{\nu}$-degree} of a nonzero polynomial $f=\sum _Ia_IX^I\in A[X]$ as a number
$$
\deg _{\boldsymbol{\nu}}f=\max \{ |I|_{\boldsymbol{\nu}} \mid a_I\neq 0\} .
$$
For the zero polynomial we define $\deg _{\boldsymbol{\nu}}0=-\infty$.
For $\boldsymbol{\nu}=(1,\ldots ,1)$ we obtain the standard total degree of a polynomial $f$ that will be denoted by $\deg f$. Also for $\boldsymbol{\nu}=(0,\ldots 0,1,0,\ldots ,0)$, where $1$ is on the $i$-th place, we use the notation $\deg _i$; i.e., this is the degree with respect to the variable $X_i$. In particular when we have three variables $X,Y$ and $Z$, we will write $\deg _X,\deg _Y$ and $\deg _Z$, respectively.
\end{defin}

\begin{rmk}
\label{prelim-rmk2}
Given a polynomial $f\in A[X]$ and $\boldsymbol{\nu}\in\mathbb N_0^n$, as in the previous Definition, for $k\in \mathbb N_0$ we can define $f^{(\boldsymbol{\nu},k)}$, its {\em $\boldsymbol{\nu}$-homogeneous component} of order $k$, as the sum of all monomials $a_IX^I$ such that $|I|_{\boldsymbol{\nu}}=k$.
\end{rmk}

The corollary given below is an immediate consequence of the following lemma which is a straightforward generalization of the well known one.

\begin{lem}
\label{lemV1}
Let $A$ be a ring and $f,g\in A[X]$ polynomials.
\begin{enumerate}
 \renewcommand{\labelenumi}{(\roman{enumi})}
 \item We have
       $$
       \deg _{\boldsymbol{\nu}}(f+g) \leq \max \{ \deg _{\boldsymbol{\nu}}f, \deg _{\boldsymbol{\nu}}g\} .
       $$
       In particular we have an equality if $\deg _{\boldsymbol{\nu}}f\neq \deg _{\boldsymbol{\nu}}g$.
 \item We have
       $$
       \deg _{\boldsymbol{\nu}}(fg) \leq \deg _{\boldsymbol{\nu}}f + \deg _{\boldsymbol{\nu}}g.
       $$
       In particular we have an equality if $A$ is an integral domain.
\end{enumerate}
\end{lem}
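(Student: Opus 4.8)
The plan is to argue directly from the definition of $\deg_{\boldsymbol{\nu}}$, tracking exactly which monomials can survive in $f+g$ and in $fg$, and to obtain the equality in (ii) by isolating a distinguished leading monomial via the monomial total order $\preceq_{\boldsymbol{\nu}}$. Throughout I write $f=\sum_I a_I X^I$ and $g=\sum_I b_I X^I$. I first dispose of the degenerate cases $f=0$ or $g=0$, where every assertion reduces to the conventions $\deg_{\boldsymbol{\nu}}0=-\infty$ and the standard arithmetic of $-\infty$, so from now on both are nonzero.

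For (i), since $f+g=\sum_I (a_I+b_I)X^I$, any index $I$ that contributes to $\deg_{\boldsymbol{\nu}}(f+g)$ has $a_I+b_I\neq 0$, hence $a_I\neq 0$ or $b_I\neq 0$, so $|I|_{\boldsymbol{\nu}}\leq\max\{\deg_{\boldsymbol{\nu}}f,\deg_{\boldsymbol{\nu}}g\}$; taking the maximum over such $I$ yields the inequality. When the two degrees differ, say $\deg_{\boldsymbol{\nu}}f>\deg_{\boldsymbol{\nu}}g$, I choose an index $I$ with $|I|_{\boldsymbol{\nu}}=\deg_{\boldsymbol{\nu}}f$ and $a_I\neq 0$; its weight exceeds $\deg_{\boldsymbol{\nu}}g$, so necessarily $b_I=0$, whence $a_I+b_I=a_I\neq 0$ and $\deg_{\boldsymbol{\nu}}(f+g)\geq\deg_{\boldsymbol{\nu}}f$, giving equality. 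For the inequality in (ii), write $fg=\sum_K c_K X^K$ with $c_K=\sum_{I+J=K}a_Ib_J$, and use that the $\boldsymbol{\nu}$-weight is additive, $|I+J|_{\boldsymbol{\nu}}=|I|_{\boldsymbol{\nu}}+|J|_{\boldsymbol{\nu}}$; whenever $c_K\neq 0$ some pair $I+J=K$ has $a_Ib_J\neq 0$, so $|K|_{\boldsymbol{\nu}}\leq\deg_{\boldsymbol{\nu}}f+\deg_{\boldsymbol{\nu}}g$, and the bound follows.

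The substance of the lemma, and where I would invest the effort, is the equality in (ii) when $A$ is a domain. Let $I_0$ and $J_0$ be the $\preceq_{\boldsymbol{\nu}}$-largest indices with $a_{I_0}\neq 0$ and $b_{J_0}\neq 0$, and put $K_0=I_0+J_0$. Because $\preceq_{\boldsymbol{\nu}}$ compares first by weight, $I_0$ and $J_0$ also maximize the weight, so $|I_0|_{\boldsymbol{\nu}}=\deg_{\boldsymbol{\nu}}f$ and $|J_0|_{\boldsymbol{\nu}}=\deg_{\boldsymbol{\nu}}g$, hence $|K_0|_{\boldsymbol{\nu}}=\deg_{\boldsymbol{\nu}}f+\deg_{\boldsymbol{\nu}}g$. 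The crux is to show that no cancellation destroys this leading term, i.e.\ that $c_{K_0}=a_{I_0}b_{J_0}$. This is exactly the point where the monomial total order is indispensable: if $I+J=K_0$ with $a_I\neq 0$ and $b_J\neq 0$, then $I\preceq_{\boldsymbol{\nu}}I_0$ and $J\preceq_{\boldsymbol{\nu}}J_0$ by maximality, and were $I\prec_{\boldsymbol{\nu}}I_0$, adding $J$ and then using $J\preceq_{\boldsymbol{\nu}}J_0$ would give $K_0=I+J\prec_{\boldsymbol{\nu}}I_0+J\preceq_{\boldsymbol{\nu}}I_0+J_0=K_0$, a contradiction; thus $I=I_0$, $J=J_0$, and $(I_0,J_0)$ is the unique contributing decomposition of $K_0$. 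Consequently $c_{K_0}=a_{I_0}b_{J_0}$, which is nonzero precisely because $A$ is an integral domain, so $\deg_{\boldsymbol{\nu}}(fg)\geq|K_0|_{\boldsymbol{\nu}}=\deg_{\boldsymbol{\nu}}f+\deg_{\boldsymbol{\nu}}g$ and equality holds. The only genuine subtlety is this cancellation-avoidance argument via monomiality of $\preceq_{\boldsymbol{\nu}}$; everything else is routine bookkeeping with the weight function $|\cdot|_{\boldsymbol{\nu}}$.
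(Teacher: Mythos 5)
Your proof is correct. The paper gives no proof of this lemma (dismissing it as a straightforward generalization of the well-known case), but its proof of the dual codegree statement, Lemma \ref{lemZ1}, uses exactly your approach via the $\preceq_{\boldsymbol{\nu}}$-extremal indices $I_0,J_0$; if anything you are more careful than the paper at the one substantive point, namely verifying through monomiality of $\preceq_{\boldsymbol{\nu}}$ (together with cancellativity of addition in $\mathbb N_0^n$) that $(I_0,J_0)$ is the unique decomposition of $K_0$ contributing to the coefficient, so that $c_{K_0}=a_{I_0}b_{J_0}\neq 0$ in an integral domain.
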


\begin{cor}
\label{corV1}
Let $A$ be an integral domain and $R=A[X]$ the corresponding ring of polynomials in $n$ variables. For any $\boldsymbol{\nu} \in\mathbb N_0^n$ the map $\deg _{\boldsymbol{\nu}}: R\to \mathbb N_0^{-\infty}$ is a Dumas valuation of the first kind.
\end{cor}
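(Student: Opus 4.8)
The plan is to verify directly that $\deg_{\boldsymbol{\nu}}$ satisfies the four defining properties (D0), (D1), (D2-1) and (D3-1) of a Dumas valuation of the first kind from Definition \ref{Dumas}, taking $\Gamma=\mathbb N_0$ with its usual total order. The work is almost entirely a matter of translating Lemma \ref{lemV1} and the defining formula for $\deg_{\boldsymbol{\nu}}$ into the language of that definition.

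First I would dispose of the two conditions governing the value $-\infty$. Condition (D2-1) is immediate from the definition: one sets $\deg_{\boldsymbol{\nu}}0=-\infty$ by convention, while for any nonzero $f=\sum_I a_IX^I$ the quantity $\deg_{\boldsymbol{\nu}}f=\max\{|I|_{\boldsymbol{\nu}}\mid a_I\neq 0\}$ is a maximum over a nonempty finite set of nonnegative integers, hence a finite element of $\mathbb N_0$ and in particular $\neq-\infty$. For (D0), since every component $\nu_k$ of $\boldsymbol{\nu}$ lies in $\mathbb N_0$, each weight $|I|_{\boldsymbol{\nu}}=\nu_1i_1+\cdots+\nu_ni_n$ is $\geq 0$, so the maximum $\deg_{\boldsymbol{\nu}}f$ is $\geq 0$ whenever $f\neq 0$.

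Next comes the multiplicativity (D1), namely $\deg_{\boldsymbol{\nu}}(fg)=\deg_{\boldsymbol{\nu}}f+\deg_{\boldsymbol{\nu}}g$. This is exactly the equality case of Lemma \ref{lemV1}(ii), which holds precisely because $A$ is an integral domain; I would simply invoke it, noting that when either factor is $0$ both sides equal $-\infty$ in accordance with the convention $(-\infty)+k=-\infty$ on $\mathbb N_0^{-\infty}$. Finally (D3-1), the inequality $\deg_{\boldsymbol{\nu}}(f+g)\leq\max\{\deg_{\boldsymbol{\nu}}f,\deg_{\boldsymbol{\nu}}g\}$ together with equality whenever $\deg_{\boldsymbol{\nu}}f\neq\deg_{\boldsymbol{\nu}}g$, is literally the content of Lemma \ref{lemV1}(i).

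Since none of these steps is genuinely difficult, I do not expect a real obstacle. The only points demanding a little care are bookkeeping ones: making sure the conventions attached to the symbol $-\infty$ in $\mathbb N_0^{-\infty}$ (namely $k\succ-\infty$ and $(-\infty)+k=-\infty$) are respected in the degenerate case of a zero polynomial, and remembering that the crucial equality in (D1) is exactly where the integral-domain hypothesis on $A$ is used, because it guarantees that the nonzero $\boldsymbol{\nu}$-leading terms of $f$ and $g$ cannot cancel in the product. This is why the statement is phrased for integral domains and why the corollary is ``immediate'' from the lemma.
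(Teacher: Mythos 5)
Your proof is correct and follows exactly the route the paper intends: the paper presents Corollary \ref{corV1} as an immediate consequence of Lemma \ref{lemV1}, with (D1) and (D3-1) being parts (ii) and (i) of that lemma and (D0), (D2-1) being definitional checks, precisely as you do. Your added remark pinpointing the integral-domain hypothesis as the source of the equality in (D1) matches the paper's reasoning as well.
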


Next we explain how to get some Dumas valuations of the second kind. For that purpose first recall that given a ring  $R$, an element $p\in R^{\times}$ is {\em prime} if $p\not\in R^{\ast}$ and it satisfies the following condition: If $p|xy$ for some $x,y\in R$, then $p|x$ or $p|y$. By $\Pr R$ we denote the set of all primes in $R$.

\begin{lem}
\label{lemU1}
Let $R$ be a Noetherian integral domain and $p\in \Pr R$; more generally, $p$ is any non-invertible element of $R$. Then for every $x\in R^{\times}$ there is a unique $k\in\mathbb N_0$ such that $p^k|x$ and $p^{k+1} \notdiv x$; and then we write $\ord _p(x)=k$.
\end{lem}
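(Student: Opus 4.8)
The plan is to split the statement into its two assertions — existence and uniqueness of the exponent $k$ — and to treat uniqueness as the routine part while locating the real content in existence, which is exactly where the Noetherian hypothesis and the non-invertibility of $p$ must enter.

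For uniqueness I would introduce the set $S=\{ k\in\mathbb N_0 \mid p^k\mid x\}$ and observe that it is an initial segment of $\mathbb N_0$: if $p^k\mid x$, then $p^j\mid x$ for every $j\leq k$, since $p^j\mid p^k\mid x$. Moreover $S$ is nonempty because $p^0=1\mid x$, so $0\in S$. An initial segment of $\mathbb N_0$ has at most one ``boundary'' element, namely a $k$ with $k\in S$ but $k+1\notin S$; and such a $k$ is precisely one satisfying $p^k\mid x$ and $p^{k+1}\notdiv x$. This yields uniqueness at once, and it reduces existence to the single claim that $S$ is bounded, equivalently that $p^k\mid x$ fails for some $k$.

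The heart of the matter, and the step I expect to be the main obstacle, is therefore ruling out the possibility that $p^k\mid x$ for every $k\in\mathbb N_0$. First I would dispose of the degenerate case $p=0$ allowed by the ``more generally'' clause: then $p^1=0$ does not divide the nonzero element $x$, so $k=0$ works and we are done. Assume now $p\neq 0$ and suppose, for contradiction, that $p^k\mid x$ for all $k$. Writing $x=p^k y_k$ with $y_k\in R$ (so $y_0=x$) and comparing $p^k y_k=x=p^{k+1}y_{k+1}=p^k(p\,y_{k+1})$, I may cancel $p^k\neq 0$ in the domain $R$ to get $y_k=p\,y_{k+1}$, hence $(y_k)\subseteq(y_{k+1})$. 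This manufactures an ascending chain of ideals $(y_0)\subseteq(y_1)\subseteq(y_2)\subseteq\cdots$.

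Now the Noetherian hypothesis is invoked: the chain stabilises, so $(y_N)=(y_{N+1})$ for some $N$, giving $y_{N+1}=r\,y_N$ for some $r\in R$. Substituting into $y_N=p\,y_{N+1}$ produces $y_N=pr\,y_N$, and since $x\neq 0$ forces $y_N\neq 0$, cancellation in the domain yields $1=pr$, so $p\in R^{\ast}$ — contradicting that $p$ is non-invertible. This contradiction shows $S$ is finite, which supplies the required maximal $k$ and completes both existence and uniqueness. The one thing to watch throughout is that every cancellation (of $p^k$ and of $y_N$) genuinely uses that $R$ is an integral domain and that the cancelled elements are nonzero, and that the final contradiction is precisely the non-invertibility of $p$; thus both hypotheses on $R$ and on $p$ are indispensable, which also explains why no boundedness holds when $p$ is a unit.
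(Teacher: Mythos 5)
Your proposal is correct and takes essentially the same route as the paper: assuming $p^k\mid x$ for all $k$, you write $x=p^ky_k$, cancel in the domain to get the ascending chain $(y_0)\subseteq (y_1)\subseteq \cdots$, and extract $1=pr$ from a stabilized link, contradicting the non-invertibility of $p$ --- exactly the paper's argument, merely phrased via stabilization of the chain where the paper argues that equality $(y_k)=(y_{k+1})$ is impossible and then that an infinite strictly ascending chain contradicts the Noetherian hypothesis. Your explicit treatment of uniqueness and of the degenerate case $p=0$ covers routine points the paper leaves implicit.
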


\begin{proof}
Suppose we have some $x\in R^{\times}$ such that $p^k|x$ for every $k\in\mathbb N$. Then $x=p^ky_k$ for some elements $y_k\in R^{\times}$. As we have
\begin{equation}
\label{lemU1-lab1}
p^ky_k=x=p^{k+1}y_{k+1},
\end{equation}
it follows that $y_k=py_{k+1}\in (y_{k+1})$ and so $(y_k)\subseteq (y_{k+1})$ for every $k\in\mathbb N$. If we would have the equality $(y_k)=(y_{k+1})$, for some $k$, then $y_{k+1}=ry_k$ for some $r\in R$. But then (\ref{lemU1-lab1}) easily implies that $1=pr$, which is impossible. Thus we obtain that all the inclusions between the principal ideals $(y_k)$ are strict, which is again impossible.
\end{proof}

Concerning the following corollary recall that an integral domain can be Noetherian but not a UFD, and also it can be a UFD but not a Noetherian ring.

\begin{cor}
\label{corU2}
Let $R$ be an integral domain which is Noetherian or a UFD. For any $p\in \Pr R$ we can define a map $\ord _p:R\to \mathbb N_0^{+\infty}$, where in particular we put $\ord _p(0)=+\infty$. Then this map is a Dumas valuation of the second kind.
\end{cor}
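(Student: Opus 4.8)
The plan is to verify directly that the map $\ord_p$ satisfies the four defining conditions (D0), (D1), (D2-2), (D3-2) of a $\Gamma$-Dumas valuation of the second kind with $\Gamma=\mathbb{N}_0$. Lemma \ref{lemU1} already guarantees that $\ord_p(x)$ is a well-defined element of $\mathbb{N}_0$ for every $x\in R^{\times}$, which is exactly what makes the map well-defined; combined with the convention $\ord_p(0)=+\infty$ this immediately gives both (D0) (since $\ord_p(x)\in\mathbb{N}_0$ means $\ord_p(x)\geq 0$ for $x\neq 0$) and (D2-2) (since $\ord_p(x)=+\infty$ precisely when $x=0$, the case excluded in Lemma \ref{lemU1}).

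The multiplicativity (D1), namely $\ord_p(xy)=\ord_p(x)+\ord_p(y)$, is where the hypothesis that $p$ is prime (equivalently, in a UFD, that $R$ is factorial) enters essentially. First I would dispose of the cases where $x$ or $y$ is zero using the $+\infty$ conventions. For $x,y\in R^{\times}$, write $k=\ord_p(x)$ and $\ell=\ord_p(y)$, so $x=p^k u$ and $y=p^\ell v$ with $p\nmid u$ and $p\nmid v$; then $xy=p^{k+\ell}(uv)$, and the key point is that $p\nmid uv$. In the Noetherian case this follows since $p$ is prime: if $p\mid uv$ then $p\mid u$ or $p\mid v$, contradicting the choice of $k,\ell$. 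In the UFD case one uses unique factorization to the same effect. Hence $\ord_p(xy)=k+\ell$.

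For (D3-2) I would argue as follows. Let $m=\min\{\ord_p(x),\ord_p(y)\}$; again the degenerate cases with a zero argument are handled by the conventions, so assume $x,y\in R^{\times}$. Writing $x=p^{\ord_p(x)}u$ and $y=p^{\ord_p(y)}v$ with $p$ dividing neither $u$ nor $v$, one factors $p^m$ out of $x+y$, so $p^m\mid(x+y)$ and therefore $\ord_p(x+y)\geq m$, giving the inequality. For the equality claim when $\ord_p(x)\neq\ord_p(y)$, say $\ord_p(x)<\ord_p(y)$, so $m=\ord_p(x)$: then $x+y=p^{m}\bigl(u+p^{\ord_p(y)-m}v\bigr)$, and since $\ord_p(y)-m>0$ the element in parentheses is $\equiv u\pmod p$, which is not divisible by $p$; hence $\ord_p(x+y)=m$ exactly.

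The steps here are all routine, so I do not expect a genuine obstacle; the only point that requires care is that the proof must cover both hypotheses in the corollary (Noetherian and UFD) uniformly, and one must be slightly careful that $\ord_p$ is defined using Lemma \ref{lemU1} in the Noetherian setting whereas in the UFD setting it comes from the $p$-adic exponent in the unique factorization. I would remark that the primality of $p$ is used only for (D1) and (D3-2), while well-definedness in the Noetherian case rests on the ascending chain condition as established in Lemma \ref{lemU1}.
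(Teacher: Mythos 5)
Your proposal is correct and follows essentially the same route as the paper: well-definedness via Lemma \ref{lemU1} in the Noetherian case (and unique factorization in the UFD case), and verification of (D3-2) by writing $x=p^ku$, $y=p^{\ell}v$ with $p\notdiv u,v$ and factoring out the smaller power of $p$. The only difference is that you spell out (D1) and the degenerate zero cases, which the paper dismisses as obvious; your extra detail (in particular using that $R$ is a domain to cancel $p^m$ when showing $p\notdiv u+p^{d}v$ forces equality) is a harmless elaboration of the same argument.
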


\begin{proof}
If $R$ is a Noetherian integral domain, then by the previous lemma the map $\ord p$ is well defined. And for $R$ a UFD this is well known.

The conditions (D0), (D1) and (D2-2) of Definition \ref{Dumas} obviously hold. For the condition (D3-2) take any $x,y\in R^{\times}$ and write them as $x=p^ku$ and $y=p^{\ell}v$, where $p\notdiv u,v$ and $k\geq \ell$. Then the inequality in (D3-2) is satisfied. In particular if $k>\ell$, then
$$
\ord _p(x+y)= \ell = \min \{ \ord _p(x), \ord _p(y)\} ;
$$
as it has to be. Also if $x=0$ or $y=0$, then the validity of (D3-2) is clear.
\end{proof}

There is another interesting collection of Dumas valuations of the second kind, which is in a sense dual to the above considered collection of Dumas valuations of the first kind. As we already noted in Remark \ref{rmk0} these Dumas valuations are primarily suitable while treating the irreducibility for polynomials in more than two variables. Keeping the before introduced notation we have the following definition.

\begin{defin}
\label{codeg-nu}
For $n\geq 2$ and fixed $\boldsymbol{\nu}\in\mathbb N_0^n$ define the {\em $\boldsymbol{\nu}$-codegree} of a nonzero polynomial $f=\sum _Ia_IX^I\in A[X]$ as a number
$$
\codeg _{\boldsymbol{\nu}}f=\min \{ |I|_{\boldsymbol{\nu}} \mid a_I\neq 0\} .
$$
For the zero polynomial we define $\codeg _{\boldsymbol{\nu}}0=+\infty$. For $\boldsymbol{\nu}=(1,\ldots ,1)$ we will use the notation $\codeg$, and call it the {\em total codegree}. Analogously as for $\deg _i$ we define the $\codeg _i$.
\end{defin}

For the codegree function we have analogues of Lemma \ref{lemV1} and Corollary \ref{corV1}.

\begin{lem}
\label{lemZ1}
Let $A$ be a ring and $f,g\in A[X]$ polynomials.
\begin{enumerate}
 \renewcommand{\labelenumi}{(\roman{enumi})}
 \item We have
       $$
       \codeg _{\boldsymbol{\nu}}(f+g) \geq \min \{ \codeg _{\boldsymbol{\nu}}f, \codeg _{\boldsymbol{\nu}}g\} .
       $$
       In particular we have an equality if $\codeg _{\boldsymbol{\nu}}f\neq \codeg _{\boldsymbol{\nu}}g$.
 \item We have
       $$
       \codeg _{\boldsymbol{\nu}}(fg) \geq \codeg _{\boldsymbol{\nu}}f + \codeg _{\boldsymbol{\nu}}g.
       $$
       In particular we have an equality if $A$ is an integral domain.
\end{enumerate}
\end{lem}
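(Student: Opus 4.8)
The plan is to mirror the proof of Lemma \ref{lemV1}, replacing throughout the top $\boldsymbol{\nu}$-homogeneous components by the bottom ones and the operation $\max$ by $\min$. The organizing device is the $\boldsymbol{\nu}$-homogeneous decomposition from Remark \ref{prelim-rmk2}: every polynomial $f\in A[X]$ splits as $f=\sum_{k\geq 0}f^{(\boldsymbol{\nu},k)}$, where $f^{(\boldsymbol{\nu},k)}$ collects the monomials of $\boldsymbol{\nu}$-weight $k$, and by definition $\codeg_{\boldsymbol{\nu}}f$ is the least index $k$ with $f^{(\boldsymbol{\nu},k)}\neq 0$ (the convention $\codeg_{\boldsymbol{\nu}}0=+\infty$ accounting for degenerate cases). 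The only arithmetic fact I shall need is that the weight map $I\mapsto |I|_{\boldsymbol{\nu}}$ is additive, i.e. $|I+J|_{\boldsymbol{\nu}}=|I|_{\boldsymbol{\nu}}+|J|_{\boldsymbol{\nu}}$.

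For part (i), I would first dispose of the cases where $f$ or $g$ vanishes, which are immediate from the $+\infty$ convention. Assuming both nonzero, set $m=\min\{\codeg_{\boldsymbol{\nu}}f,\codeg_{\boldsymbol{\nu}}g\}$ and use $(f+g)^{(\boldsymbol{\nu},k)}=f^{(\boldsymbol{\nu},k)}+g^{(\boldsymbol{\nu},k)}$. For every $k<m$ both summands vanish, so $(f+g)^{(\boldsymbol{\nu},k)}=0$, giving $\codeg_{\boldsymbol{\nu}}(f+g)\geq m$. For the equality statement I assume without loss of generality $\codeg_{\boldsymbol{\nu}}f<\codeg_{\boldsymbol{\nu}}g$, so $m=\codeg_{\boldsymbol{\nu}}f$; at $k=m$ one has $g^{(\boldsymbol{\nu},m)}=0$ while $f^{(\boldsymbol{\nu},m)}\neq 0$, so the $m$-th component of $f+g$ is $f^{(\boldsymbol{\nu},m)}\neq 0$ and the codegree equals $m$.

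For part (ii), writing $p=\codeg_{\boldsymbol{\nu}}f$ and $q=\codeg_{\boldsymbol{\nu}}g$, I would note that any monomial $X^K$ occurring in $fg$ arises from products $X^IX^J=X^{I+J}$ with the relevant coefficients of $f$ and $g$ nonzero, so $|K|_{\boldsymbol{\nu}}=|I|_{\boldsymbol{\nu}}+|J|_{\boldsymbol{\nu}}\geq p+q$; this yields $\codeg_{\boldsymbol{\nu}}(fg)\geq p+q$. Grouping the product by weight then shows $(fg)^{(\boldsymbol{\nu},k)}=\sum_{i+j=k}f^{(\boldsymbol{\nu},i)}g^{(\boldsymbol{\nu},j)}$, and since $f^{(\boldsymbol{\nu},i)}=0$ for $i<p$ and $g^{(\boldsymbol{\nu},j)}=0$ for $j<q$, the bottom component reduces to the single term $(fg)^{(\boldsymbol{\nu},p+q)}=f^{(\boldsymbol{\nu},p)}g^{(\boldsymbol{\nu},q)}$, with no possibility of cancellation from other weight pairs.

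The main, and really only, obstacle is the equality claim in (ii), which is the sole place where the integral domain hypothesis enters: I must conclude that this bottom component is nonzero. Both factors $f^{(\boldsymbol{\nu},p)}$ and $g^{(\boldsymbol{\nu},q)}$ are nonzero polynomials, so over a domain $A$ their product is nonzero (for instance by the equality case of Lemma \ref{lemV1}(ii) applied to these homogeneous factors), whence $\codeg_{\boldsymbol{\nu}}(fg)=p+q$. I expect nothing delicate beyond keeping the degenerate cases clearly separated — $f$ or $g$ zero, and a non-domain $A$ where the product of the bottom components may cancel — exactly as in the degree case.
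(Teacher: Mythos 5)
Your proposal is correct, and every step checks out: the component formula $(fg)^{(\boldsymbol{\nu},k)}=\sum_{i+j=k}f^{(\boldsymbol{\nu},i)}g^{(\boldsymbol{\nu},j)}$ is valid by additivity of the weight, the pair $(p,q)$ is indeed the unique pair with $i+j=p+q$, $i\geq p$, $j\geq q$, and your handling of the degenerate cases is fine. However, your route differs from the paper's. The paper does not grade by weight at all; instead it uses the full total order $\preceq_{\boldsymbol{\nu}}$ (weight refined lexicographically) and writes $f=a_{I_0}X^{I_0}+\sum_K a_KX^K$, $g=b_{J_0}X^{J_0}+\sum_L b_LX^L$ with $I_0,J_0$ the $\preceq_{\boldsymbol{\nu}}$-minimal exponents. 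Since $\preceq_{\boldsymbol{\nu}}$ is monomial and total, the exponent $I_0+J_0$ is strictly minimal in $fg$ and can arise from the single pair $(I_0,J_0)$ only, so the extremal coefficient is the \emph{single product} $a_{I_0}b_{J_0}$; the domain hypothesis then enters purely at the element level, with no cancellation to worry about. Your bottom component $f^{(\boldsymbol{\nu},p)}g^{(\boldsymbol{\nu},q)}$ is by contrast a product of two \emph{polynomials}, which over a non-domain could vanish with both factors nonzero, so your domain hypothesis enters at the polynomial level --- you correctly plug this via the equality case of Lemma \ref{lemV1}(ii) (equivalently, the fact that $A[X]$ is a domain when $A$ is), and since Lemma \ref{lemV1} is stated independently earlier, this is legitimate and not circular. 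Each approach has its merits: the paper's lexicographic tie-breaking makes the no-cancellation claim true by construction and keeps the argument entirely elementwise, while your graded ``trailing form'' argument dispenses with the auxiliary total order (only the weight function $|\cdot|_{\boldsymbol{\nu}}$ is needed), makes both inequality statements transparent, and isolates, exactly as the paper implicitly does, the unique point where integrality is used. If you wanted to make your proof self-contained without citing Lemma \ref{lemV1}, you could apply the paper's minimal-monomial extraction to the two homogeneous factors themselves, which reduces your polynomial-level nonvanishing to the element-level statement $a_{I_0}b_{J_0}\neq 0$.
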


\begin{proof}
We may assume that both $f$ and $g$ are nonzero polynomials. Let $\codeg _{\boldsymbol{\nu}}f =|I_0|_{\boldsymbol{\nu}}$; i.e., $f=a_{I_0}X^{I_0} +\sum _Ka_KX^K$, where $a_{I_0}\neq 0$ and $I_0\preceq _{\boldsymbol{\nu}}K$ for each $K$ such that $a_K\neq 0$. And let also $\codeg _{\boldsymbol{\nu}}g =|J_0|_{\boldsymbol{\nu}}$; i.e., $g=b_{J_0}X^{J_0} +\sum _Lb_LX^L$, where $b_{J_0}\neq 0$ and $J_0\preceq _{\boldsymbol{\nu}}L$ for each $L$ such that $b_L\neq 0$. Then $fg= a_{I_0}b_{J_0}X^{I_0+J_0} +h$, where $h\in A[X]$ has a form $h(X)= \sum _Mc_MX^M$ such that $I_0+J_0 \preceq _{\boldsymbol{\nu}} M$ for each $M$ such that $c_M\neq 0$. In particular if $a_{I_0}b_{J_0}\neq 0$, then
$$
\codeg _{\boldsymbol{\nu}} (fg) =|I_0+J_0|_{\boldsymbol{\nu}}= |I_0|_{\boldsymbol{\nu}} + |J_0|_{\boldsymbol{\nu}} = \codeg _{\boldsymbol{\nu}}f + \codeg _{\boldsymbol{\nu}}g.
$$
Thus the part (ii) of the lemma is clear.

Suppose for example that $\codeg _{\boldsymbol{\nu}}f <\codeg _{\boldsymbol{\nu}} g$. Then obviously
$$
\codeg _{\boldsymbol{\nu}} (f+g) =|I_0|_{\boldsymbol{\nu}} = \codeg _{\boldsymbol{\nu}} f = \min \{ \codeg _{\boldsymbol{\nu}}f, \codeg _{\boldsymbol{\nu}}g\} ;
$$
and therefore (i) is clear as well.
\end{proof}

This corollary is a direct consequence of the above observed facts.

\begin{cor}
\label{corZ1}
Let $A$ be an integral domain and $R=A[X]$ the corresponding ring of polynomials in $n$ variables. For any $\boldsymbol{\nu} \in\mathbb N_0^n$ the map $\codeg _{\boldsymbol{\nu}}: R\to \mathbb N_0^{+\infty}$ is a Dumas valuation of the second kind.
\end{cor}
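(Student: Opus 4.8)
The plan is to verify directly that $\codeg_{\boldsymbol{\nu}}$ meets the four defining conditions (D0), (D1), (D2-2) and (D3-2) of a $\Gamma$-Dumas valuation of the second kind in the case $\Gamma=\mathbb N_0$, letting Lemma \ref{lemZ1} carry the additive and multiplicative estimates. Since the substance has already been established there, the corollary is essentially a matter of matching the established facts against the axioms of Definition \ref{Dumas}.

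First I would dispose of (D0) and (D2-2), which are immediate from the definition of the $\boldsymbol{\nu}$-codegree. For a nonzero $f=\sum _Ia_IX^I$ each weight $|I|_{\boldsymbol{\nu}}=\nu _1i_1+\cdots +\nu _ni_n$ lies in $\mathbb N_0$, so the minimum of these weights over the indices with $a_I\neq 0$ is a nonnegative integer, giving (D0). Together with the convention $\codeg _{\boldsymbol{\nu}}0=+\infty$, and the fact that this minimum is finite for any nonzero polynomial, we get that $\codeg _{\boldsymbol{\nu}}f=+\infty$ holds exactly when $f=0$, which is (D2-2).

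The two substantive conditions follow from the two parts of Lemma \ref{lemZ1}, specialized to the situation where $A$ is an integral domain. Condition (D1), namely $\codeg _{\boldsymbol{\nu}}(fg)=\codeg _{\boldsymbol{\nu}}f+\codeg _{\boldsymbol{\nu}}g$, is precisely the equality clause of Lemma \ref{lemZ1}(ii); this clause is available here because $A$ has no zero divisors, so the product $a_{I_0}b_{J_0}$ of the two lowest-weight coefficients does not vanish and the lowest-weight term of $fg$ survives. Condition (D3-2), the inequality $\codeg _{\boldsymbol{\nu}}(f+g)\succeq \min \{ \codeg _{\boldsymbol{\nu}}f,\codeg _{\boldsymbol{\nu}}g\}$ with equality when the two codegrees differ, is exactly Lemma \ref{lemZ1}(i).

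I expect no real obstacle. The only care needed is bookkeeping around the symbol $+\infty$: one must confirm that (D1) and (D3-2) persist when $f$ or $g$ is the zero polynomial, using the monoid conventions $(+\infty)+k=+\infty$ and $+\infty \succ k$ from $\mathbb N_0^{+\infty}$, so that both sides of (D1) equal $+\infty$ and $\min \{ +\infty ,k\} =k$ in (D3-2); these are trivial verifications. The genuine content of the argument, namely tracking the unique lowest $\boldsymbol{\nu}$-weight monomial of a product and invoking the absence of zero divisors in $A$, was already carried out in the proof of Lemma \ref{lemZ1}, and the corollary is merely its reformulation in the language of Definition \ref{Dumas}.
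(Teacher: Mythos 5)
Your proposal is correct and matches the paper's intent exactly: the paper gives no separate argument, stating only that the corollary is a direct consequence of Lemma \ref{lemZ1}, and your verification of (D0) and (D2-2) from the definition together with (D1) and (D3-2) from parts (ii) and (i) of that lemma is precisely that deduction. Your extra bookkeeping for the zero polynomial via the conventions on $+\infty$ is a harmless and correct completion of what the paper leaves implicit.
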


At the end of this section let us say that we believe there are more $\Gamma$-Dumas valuations that could be useful for various purposes.

\section{The case of Dumas valuations of the first kind}
\label{Dumas1}
The main purpose of this section is to prove the more interesting part of Theorem \ref{mainTHM}; i.e., the case when $\omega$ is a Dumas valuation of the first kind. In order to prove this we suppose to the contrary. That is we assume that for a polynomial as in our Conjecture there are some nonconstant polynomials $g$ and $h$ such that $f=gh$. Then we will obtain a contradiction. We begin with an obvious observation that will be used all the time.

\begin{lem}
\label{obvious lemma}
Let $\omega$ be a Dumas valuation on an integral domain $R$. Let $f\in R[X]$ be a polynomial as in the Conjecture, of degree $n>1$, and assume there are some nonconstant polynomials $g,h\in R[X]$ as in \textup{(\ref{gh-poly})} satisfying $f=gh$. Then
$$
\omega (a_s)=0=\omega (b_t).
$$
\end{lem}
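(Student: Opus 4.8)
The plan is to read the claim straight off the leading-coefficient equation, using only the multiplicativity (D1) and the positivity (D0) of $\omega$, together with hypothesis (\textsf{A}). First I would record the degree and leading-coefficient bookkeeping. Since $R$ is an integral domain, so is $R[X]$, and hence degrees add under multiplication: $\deg(gh)=\deg g+\deg h=s+t$. As $f=gh$ has degree $n$, this forces $s+t=n$, and comparing the coefficients of $X^n$ on both sides gives $\varphi_n=a_sb_t$. In particular $a_s$ and $b_t$ are nonzero, being the leading coefficients of the nonconstant factors $g$ and $h$.

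Next I would apply (D1) to this product relation to obtain $\omega(\varphi_n)=\omega(a_sb_t)=\omega(a_s)+\omega(b_t)$. By hypothesis (\textsf{A}) we have $\omega(\varphi_n)=0$, so $\omega(a_s)+\omega(b_t)=0_{\Gamma}$. Since $a_s$ and $b_t$ are nonzero, (D0) yields $\omega(a_s)\succeq 0_{\Gamma}$ and $\omega(b_t)\succeq 0_{\Gamma}$. It then remains only to see that two nonnegative values summing to zero must both vanish: if, say, $\omega(a_s)\succ 0_{\Gamma}$, then because the order is monomial and $\omega(b_t)\succeq 0_{\Gamma}$, adding $\omega(b_t)$ to $\omega(a_s)\succ 0_\Gamma$ gives $\omega(a_s)+\omega(b_t)\succeq \omega(a_s)\succ 0_{\Gamma}$, contradicting $\omega(a_s)+\omega(b_t)=0_{\Gamma}$. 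Hence $\omega(a_s)=0_{\Gamma}$, and by the same argument $\omega(b_t)=0_{\Gamma}$. For an ordinary Dumas valuation, where $\Gamma=\mathbb N_0$, this last step is just the trivial remark that two nonnegative integers with zero sum are both zero.

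There is genuinely no obstacle here: the statement is a one-line consequence of (\textsf{A}), (D0) and (D1), and holds uniformly whether $\omega$ is of the first or the second kind, since only condition (\textsf{A}) (not (\textsf{B1}) or (\textsf{B2})) is used. The single point that warrants care is the invocation of the positivity axiom $x\succ 0$ for every nonzero $x\in\Gamma$ (equivalently, that $\omega$ takes nonnegative values on $R^{\times}$), which is precisely what lets the vanishing of the sum force each summand to vanish.
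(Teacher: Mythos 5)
Your proof is correct and is exactly the intended argument: the paper states this lemma without proof (calling it an obvious observation), and the standard justification is precisely your reading of $\varphi_n=a_sb_t$ from the leading coefficients, followed by (D1), condition {\sf (A)}, and (D0) to conclude that two nonnegative values summing to $0$ must both vanish. Your extra remark about the monomial order handling the general $\Gamma$-valued case is a harmless strengthening; nothing is missing.
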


Next we prove the following auxiliary result.

\begin{lem}
\label{degh=1-lemma}
Let $\omega$ be a Dumas valuation on an integral domain $R$. Let $f\in R[X]$ be a polynomial as in the Conjecture, of degree $n>1$. Then there are no polynomials $g,h\in R[X]$ such that $f=gh$ and $\deg h=1$.
\end{lem}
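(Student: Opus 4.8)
The plan is to argue by contradiction: suppose $f=gh$ with $\deg h=1$, so that $s=\deg g=n-1$ and $h=b_1X+b_0$ as in \textup{(\ref{gh-poly})}. By Lemma~\ref{obvious lemma} we have $\omega(a_s)=\omega(b_1)=0$. Since $\varphi_0=a_0b_0$ and condition (A) forces $\omega(\varphi_0)=m\in\mathbb N$ to be finite, neither $b_0$ nor $b_1$ can vanish; set $\beta=\omega(b_0)\in\mathbb N_0$. The entire argument reduces to pinning down $\beta$: I will show the hypotheses force $\beta=m/n$, which is impossible, since $\gcd(n,m)=1$ and $n>1$ give $m/n\notin\mathbb Z$ while $\beta\in\mathbb N_0$.

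The first step is to compress the factorization into a single scalar relation among the $\varphi_j$. Passing to the quotient field $\mathcal Q(R)$, the element $-b_0/b_1$ is a root of $f$ because $h\mid f$; clearing denominators in $f(-b_0/b_1)=0$ by multiplying through by $b_1^{\,n}$ yields the identity
$$
\sum_{j=0}^{n}\varphi_j(-b_0)^j b_1^{\,n-j}=0 ,
$$
all of whose summands already lie in $R$, so this is a genuine relation in $R$ itself (no extension of $\omega$ is needed).

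Now I apply $\omega$, using that it is of the first kind. Writing $T_j=\varphi_j(-b_0)^jb_1^{\,n-j}$, multiplicativity together with $\omega(b_1)=0$ gives $\omega(T_j)=\omega(\varphi_j)+j\beta$, and condition (B1) bounds this by the value at $j$ of the affine function $\ell(j)=\frac{(n-j)m}{n}+j\beta$, whose slope in $j$ is $\beta-m/n$. The crucial point is that the bound is \emph{exact at the two endpoints}: $\omega(T_0)=\omega(\varphi_0)=m=\ell(0)$ and $\omega(T_n)=n\beta=\ell(n)$, because $\omega(\varphi_0)=m$ and $\omega(\varphi_n)=0$ are prescribed values, not mere inequalities. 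The key structural fact — and the heart of the Newton-polygon philosophy — is that a first-kind valuation of a sum equal to $0$ cannot have a strictly dominant term: by Lemma~\ref{ba1}(iv), if some $\omega(T_{j^\ast})$ strictly exceeded all the others, then $\omega\bigl(\sum_jT_j\bigr)=\omega(T_{j^\ast})\neq-\infty$, contradicting $\sum_jT_j=0$. Hence the maximum of $\omega(T_0),\dots,\omega(T_n)$ is attained at least twice.

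The proof then closes with a sign analysis of $\beta-m/n$. If $\beta>m/n$, then $\ell$ is strictly increasing, so $\omega(T_n)=\ell(n)>\ell(j)\ge\omega(T_j)$ for every $j<n$, making $T_n$ the unique dominant term and contradicting the ``attained twice'' conclusion; symmetrically, if $\beta<m/n$ then $\ell$ is strictly decreasing and $T_0$ is uniquely dominant, again a contradiction. The remaining case $\beta=m/n$ is excluded by the gcd hypothesis as noted at the outset. I expect the only delicate point to be the reduction to the relation above and the recognition that the endpoint equalities $j=0,n$ are genuine equalities; once those are in place, the ``dominant term'' principle from Lemma~\ref{ba1}(iv) does the work, and the gcd condition delivers the final contradiction with essentially no computation.
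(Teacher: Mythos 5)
As written, your proof establishes only half of the lemma. The statement concerns a Dumas valuation of \emph{either} kind: ``a polynomial as in the Conjecture'' means that ({\sf A}) holds together with ({\sf B1}) if $\omega$ is of the first kind or ({\sf B2}) if it is of the second, and the paper relies on the second-kind case of this lemma in Section \ref{Dumas2} just as much as on the first-kind case in Section \ref{Dumas1}. You announce ``using that it is of the first kind'' and never treat ({\sf B2}). The omission is easily repaired, because your scheme dualizes word for word: writing $m=\omega (\varphi _0)$ and $\beta =\omega (b_0)$ as you do, for a second-kind $\omega$ condition ({\sf B2}) gives $\omega (T_j)\geq \ell (j)$ with the same endpoint equalities $\omega (T_0)=m=\ell (0)$ and $\omega (T_n)=n\beta =\ell (n)$, and the second-kind half of Lemma \ref{ba1}(iv) forbids a unique strictly \emph{minimal} term in a sum equal to $0$, whose value is now $+\infty$; for $\beta >m/n$ the term $T_0$ is the unique strict minimum, for $\beta <m/n$ it is $T_n$, and $\beta =m/n$ is again excluded by the gcd. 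So the gap is one of coverage rather than of ideas, but it must be filled for the lemma as stated.

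For the case you do treat, the argument is correct and genuinely differently organized from the paper's. The paper splits each kind into two claims according to the sign of $\omega (b_0)-\omega (\varphi _0)/n$, handles one side by an inductive coefficient-by-coefficient estimate of $\omega (a_{n-j})$, and only the other side by the identity (\ref{degh=1-lab4}), derived by telescoping the coefficient equalities (\ref{degh=1-lab1A}); multiplying (\ref{degh=1-lab4}) by $b_1$ and using $\varphi _n=a_{n-1}b_1$ recovers exactly your relation $\sum _{j=0}^n\varphi _j(-b_0)^jb_1^{n-j}=0$, which you obtain more directly by evaluating $f$ at the root $-b_0/b_1$ in $\mathcal Q(R)$ and clearing denominators (legitimately, since every summand lies in $R$, and with $b_0\neq 0$ correctly justified from $\varphi _0=a_0b_0\neq 0$). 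Your two-endpoint observation --- that the affine bound $\ell (j)$ is attained exactly at $j=0$ and $j=n$, so any nonzero slope $\beta -m/n$ forces a unique strictly dominant endpoint term, contradicting $\omega (0)=-\infty$ via Lemma \ref{ba1}(iv) --- lets the single identity dispose of both sign cases at once and eliminates the induction altogether, with the borderline $\beta =m/n$ killed by $\gcd (n,m)=1$ exactly as in the paper's remark preceding Lemma \ref{degh=2-lemma}. What the paper's inductive halves buy is the technique actually needed in Lemmas \ref{degh=2-lemma} and \ref{degh=3-lemma}, where no single root-substitution identity is available; for the present lemma your version is shorter and makes the Newton-polygon mechanism transparent.
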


\begin{proof}
Assume that $\omega$ is of the second kind. Also suppose to the contrary, that there are some $g,h\in R[X]$ such that $f=gh$ and $\deg h=1$. Write
$$
g=a_{n-1}X^{n-1}+\cdots +a_1X+a_0 \quad \text{ and } \quad h=b_1X+b_0,
$$
for some coefficients $a_i,b_j\in R$. As a special case of the observation given in the previous lemma now we have that
\begin{equation}
\label{degh=1-lab1}
\omega (a_{n-1})=0=\omega (b_1).
\end{equation}
Here we also have the equalities
\begin{equation}
\label{degh=1-lab1A}
a_{k-1}b_1+a_kb_0=\varphi _k, \quad \text{ for $k=\{0,1,\ldots ,n-1\}$};
\end{equation}
see Remark \ref{rmk1}. Let us then show the following claim.

\begin{claim1}
The case $\omega (b_0)>\omega (\varphi _0)/n$ is impossible.
\end{claim1}
In order to see this suppose to the contrary. Then by (\ref{degh=1-lab1A}) for $k=n-1$, and using (\ref{degh=1-lab1}), we have that
\begin{equation}
\label{degh=1-lab2}
\omega (a_{n-2})=\omega (\varphi _{n-1}-a_{n-1}b_0) \geq \min \{ \omega (\varphi _{n-1}), \omega (b_0)\} \geq \omega (\varphi _0)/n.
\end{equation}
We proceed inductively. So assume we have proved that
\begin{equation}
\label{degh=1-lab3}
\omega (a_{n-j})\geq (j-1)\omega (\varphi _0)/n,
\end{equation}
for some $j$. Hence by (\ref{degh=1-lab1A}) for $k=n-j$ it follows that
$$
\omega (a_{n-j}b_0)= \omega (a_{n-j}) +\omega (b_0) > (j-1)\omega (\varphi _0)/n +\omega (\varphi _0)/n =j\omega (\varphi _0)/n,
$$
and further
$$
\omega (a_{n-j-1}) = \omega (\varphi _{n-j}-a_{n-j}b_0) \geq \min \{ \omega (\varphi _{n-j}), \omega (a_{n-j}b_0)\} \geq j\omega (\varphi _0)/n.
$$
So we have completed the inductive step. In particular for $j=n$ we obtain that
$$
\omega (a_0) \geq (n-1)\omega (\varphi _0)/n.
$$
And finally we deduce that
$$
\omega (\varphi _0)= \omega (a_0b_0)= \omega (a_0) +\omega (b_0) >\omega (\varphi _0);
$$
a contradiction. Thus we have proved Claim 1.

\vspace{1ex}

\begin{claim2}
The case $\omega (b_0)<\omega (\varphi _0)/n$ is impossible.
\end{claim2}
Again suppose to the contrary. Next we will derive a particular useful equality. For that purpose for every $j\in \{ 0,1,\ldots ,n-1\}$ let us multiply the equality (\ref{degh=1-lab1A}) by the factor $(-1)^j b_1^{n-j-1}b_0^j$. If we add up all the such obtained new equalities, it follows that
\begin{equation}
\label{degh=1-lab4}
\sum _{j=1}^{n-1} (-1)^{j+1}\varphi _j b_1^{n-j-1}b_0^j +(-1)^{n-1}a_{n-1}b_0^n = \varphi _0b_1^{n-1}.
\end{equation}
Having in mind Lemma \ref{ba1}(ii) and (\ref{degh=1-lab1}) observe that for the typical summand $\sigma _j= (-1)^{j+1}\varphi _j b_1^{n-j-1}b_0^j$, of the above sum, we have
\begin{equation}
\label{degh=1-lab5}
\omega (\sigma _j) =\omega (\varphi _j) +j\omega (b_0).
\end{equation}
Further by the inequality in the statement of Claim 2 and the assumption in ({\sf B2}) of the Conjecture we easily get the inequalities
$
(n-j)\omega (b_0) <\omega (\varphi _j).
$
Hence we deduce that
$$
\omega (\sigma _j) > n\omega (b_0)= \omega (a_{n-1}b_0^n), \quad \text{ for $1\leq j<n$}.
$$
By the equality (\ref{degh=1-lab4}) and Lemma \ref{ba1}(iv) it immediately follows that $\omega (\varphi _0)= n\omega (b_0)$, which is impossible.

Now assume that $\omega$ is a Dumas valuation of the first kind. Of course again we have (\ref{degh=1-lab1}). Let us here first prove Claim 2. Similarly as in the proof of Claim 1 above we deduce that
$
\omega (a_{n-2}) \leq \omega (\varphi _0)/n;
$
cf. (\ref{degh=1-lab2}). Next again by inductive argument we obtain that
$
\omega (a_{n-j}) \leq (j-1)\omega (\varphi _0)/n;
$
cf. (\ref{degh=1-lab3}). The concluding argument is the same one as above for Claim 1.

As the second step we prove Claim 1. Now again we have the equality (\ref{degh=1-lab4}), and also for its typical summand $\sigma _j$ the equality (\ref{degh=1-lab5}). We claim this time to have the inequalities $\omega (\sigma _j)<n\omega (b_0)$, for every $1\leq j<n$. Indeed, by the assumption ({\sf B1}) of the Conjecture and the inequality $\omega (\varphi _0) <n\omega (b_0)$ we obtain at once that $\omega (\varphi _j) <n\omega (b_0)/(j+1)$. It remains to observe that for every $j$ we have that $n/(j+1) \leq n-j$. The concluding argument is exactly the same one as for the above proof of Claim 2.
\end{proof}

Now we formulate the main result of this section.

\begin{thm}
\label{THMDumas1kind}
Let $R$ be an integral domain, $n>1$ a natural number and elements $\varphi _0,\varphi _1,\ldots ,\varphi _n\in R$. Suppose $\omega$ is a Dumas valuation of the first kind, defined on $R$ and satisfying the conditions {\sf (A)} and {\sf (B1)} of the Conjecture. Then for every $n\leq 7$ the polynomial $f=\sum _{j=0}^n\varphi _jX^j$ cannot be written as a product of two nonconstant polynomials from $R[X]$.
\end{thm}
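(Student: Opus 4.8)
The plan is to run the Newton--polygon argument in its ``extremal vertex'' form, carried out directly at the level of the coefficients $a_i,b_j$ so as never to leave the ring $R$. Suppose to the contrary that $f=gh$ with $g,h$ as in (\ref{gh-poly}) and $1\le t=\deg h\le s=\deg g$, so that $s+t=n$. By Lemma \ref{obvious lemma} we have $\omega(a_s)=\omega(b_t)=0$, and since $\omega(\varphi_0)\in\mathbb N$ the relation $\varphi_0=a_0b_0$ forces $a_0,b_0\ne 0$ and $\omega(a_0)+\omega(b_0)=\omega(\varphi_0)$. Set $\lambda=\omega(\varphi_0)/n\in\tfrac1n\mathbb N$; then hypothesis ({\sf B1}) reads $\omega(\varphi_k)\le (n-k)\lambda$ for all $0\le k\le n$, with equality at $k=0$ and $k=n$. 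The goal is to show that the Newton data of $g$ and of $h$ lie on the single line of slope $-\lambda$, which pins down $\omega(b_0)=t\lambda$ and then contradicts the coprimality in ({\sf A}).

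To this end introduce the \emph{deficiencies}
$$
\alpha_i=\omega(a_i)-(s-i)\lambda,\qquad \beta_j=\omega(b_j)-(t-j)\lambda,
$$
and put $A=\max_i\alpha_i$ and $B=\max_j\beta_j$; both are finite and $\ge 0$, since $\alpha_s=\beta_t=0$. The heart of the matter is to prove $A=B=0$. Choose $i_0$ to be the \emph{largest} index with $\alpha_{i_0}=A$ and $j_0$ the largest with $\beta_{j_0}=B$. In the coefficient $\varphi_{i_0+j_0}=\sum_{i+j=i_0+j_0}a_ib_j$ every term obeys $\omega(a_i)+\omega(b_j)=(n-i_0-j_0)\lambda+\alpha_i+\beta_j$, so the maximal value $(n-i_0-j_0)\lambda+A+B$ is attained only when $\alpha_i=A$ and $\beta_j=B$; by the extremal choice of $i_0,j_0$ this forces $i\le i_0$ and $j\le j_0$, whence $i=i_0,\ j=j_0$. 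Thus $a_{i_0}b_{j_0}$ is the \emph{unique} term of largest $\omega$-value, and Lemma \ref{ba1}(iv) gives $\omega(\varphi_{i_0+j_0})=(n-i_0-j_0)\lambda+A+B$ with no cancellation. Comparing this with ({\sf B1}) when $1\le i_0+j_0\le n-1$ (the boundary cases $i_0+j_0\in\{0,n\}$ being immediate) yields $A+B\le 0$, hence $A=B=0$.

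Once $A=B=0$ we have $\omega(a_0)\le s\lambda$ and $\omega(b_0)\le t\lambda$; adding these and using $\omega(a_0)+\omega(b_0)=\omega(\varphi_0)=n\lambda$ forces equality, so $\omega(b_0)=t\lambda=\tfrac{t}{n}\omega(\varphi_0)$. As $\omega$ is $\mathbb N_0$-valued this means $n\mid t\,\omega(\varphi_0)$, and $\gcd(n,\omega(\varphi_0))=1$ then gives $n\mid t$, which is impossible since $0<t<n$. This contradiction shows no such factorization exists.

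I expect the genuinely delicate point to be the \emph{non-cancellation} step: guaranteeing that the corner term $a_{i_0}b_{j_0}$ is not annihilated by the other contributions to $\varphi_{i_0+j_0}$, since under (D3-1) equal $\omega$-values may cancel. This is exactly why one must select the \emph{largest} maximizing indices $i_0,j_0$; any tie among pairs realizing $A+B$ would reopen the possibility of cancellation and break the argument. Notably this mechanism does not use $n\le 7$, so it would suffice to treat an arbitrary factorization. For the stated range one may alternatively reduce to $t=\deg h\in\{1,2,3\}$ (as $t\le n/2\le 3$) and dispatch the three cases, the case $t=1$ being Lemma \ref{degh=1-lemma}; but the hands-on inductive bookkeeping of that lemma grows rapidly more intricate for $t=2,3$, which is precisely the difficulty the extremal-vertex formulation is meant to circumvent.
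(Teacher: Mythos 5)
Your proof is correct, and it takes a genuinely different---and substantially stronger---route than the paper's. The paper argues by exhaustion: since $t\le s$ and $s+t=n\le 7$ force $t\in\{1,2,3\}$, it disposes of the three values of $t$ separately (Lemmas \ref{degh=1-lemma}, \ref{degh=2-lemma} and \ref{degh=3-lemma}) through a long tree of case distinctions ({\sf Case P1}--{\sf P3}, {\sf S1}--{\sf S2}, {\sf T}, {\sf U}, {\sf V}, {\sf W} subcases) comparing individual values $\omega (a_i)$, $\omega (b_j)$ with fractions of $\omega (\varphi _0)$; the hypothesis $n\le 7$ enters only through this reduction. Your extremal-corner argument eliminates all cases at once, and I have checked the one delicate point you flag: taking \emph{both} $i_0$ and $j_0$ largest among the maximizing indices guarantees that on the antidiagonal $i+j=i_0+j_0$ any pair with $\alpha _i+\beta _j=A+B$ satisfies $i\le i_0$ and $j\le j_0$, hence equals $(i_0,j_0)$; every other term then has strictly smaller $\omega$-value (including the value $-\infty$ coming from vanishing coefficients, since $A+B\ge 0$ is finite), so Lemma \ref{ba1}(iv) applies and no cancellation under (D3-1) can occur---this is the same extremal-index mechanism as in the standard proof of Gauss's lemma, i.e.\ the multiplicativity of Newton polygons executed inside $R$ rather than over a valued field. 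The boundary cases are indeed immediate: $i_0+j_0=n$ forces $i_0=s$, $j_0=t$, whence $A=\alpha _s=0$ and $B=\beta _t=0$ outright, while $i_0+j_0=0$ gives $n\lambda =\omega (a_0)+\omega (b_0)=n\lambda +A+B$ directly from (D1). The only point worth making explicit is that the deficiencies lie in $\tfrac{1}{n}\mathbb Z\cup \{ -\infty \}$, so all comparisons take place in $\mathbb Q$, which is harmless (cf.\ Lemma \ref{ba0}). As you note, nothing uses $n\le 7$ or $t\le 3$: your argument proves the first-kind case of the Conjecture for \emph{all} $n$, superseding Theorem \ref{THMDumas1kind} and the many pages of case analysis behind it; moreover the mirrored argument (minimal deficiencies, $\min$ in place of $\max$, (D3-2) in place of (D3-1), with the final $\gcd$ step unchanged) settles the second-kind case in full as well. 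Conceptually, your computation is the single-segment Newton polygon argument for the valuation $-\omega$, which extends multiplicatively to the quotient field $\mathcal Q(R)$; carrying it out on the coefficients $a_i,b_j$, as you do, keeps everything inside $R$ and sidesteps any worry about passing between $R$ and $\mathcal Q(R)$. The only thing the paper's route yields that yours does not state separately is the collection of intermediate lemmas for individual values of $t$ in arbitrary degree (e.g.\ Lemma \ref{degh=2-lemma} for all $n\ge 4$), but your uniform statement subsumes these.
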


Our first decisive step for the proof of the stated theorem will be performed via the following lemma. Note that in Lemma \ref{degh=1-lemma} we have proved that for $n\geq 2$ the case $(s,t)=(n-1,1)$ is impossible. In the lemma given below we will show that for $n\geq 4$ the case $(s,t)=(n-2,2)$ is impossible as well. Perhaps it might be quite surprising that the later case is much more difficult to treat.

\begin{rmk}
Related to our computations in what follows note the following obvious fact: If $1\leq \lambda < n$ and $(\lambda /n)\omega (\varphi _0)\in\mathbb N$, then $\gcd (n,\omega (\varphi _0))\neq 1$. Thus in particular for a Dumas valuation $\omega$ on $R$ satisfying ({\sf A}) of the Conjecture, in ({\sf B1}) and ({\sf B2}) we have in fact strict inequalities. For the same reason in various cases we consider in our discussions below it will be just strict inequalities; see, e.g., the {\sf Case P1} to {\sf Case P3} in the proof of the next lemma.
\end{rmk}

\begin{lem}
\label{degh=2-lemma}
Let $\omega$ be a Dumas valuation of the first kind, defined on an integral domain $R$. Let $f\in R[X]$ be a polynomial as in the Conjecture, of degree $n\geq 4$. Then there are no polynomials $g,h\in R[X]$ such that $f=gh$ and $\deg h=2$.
\end{lem}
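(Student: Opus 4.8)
The plan is to turn a hypothetical degree-$2$ factor into a forbidden valuation. Suppose to the contrary that $f=gh$ with $\deg h=2$. Since we always arrange $1\le\deg h=t\le s=\deg g$, this forces $\deg g=n-2$ and hence $n\ge 4$. Write $g=a_{n-2}X^{n-2}+\cdots+a_0$ and $h=b_2X^2+b_1X+b_0$, so that Lemma~\ref{obvious lemma} gives $\omega(a_{n-2})=0=\omega(b_2)$; moreover $\varphi_0=a_0b_0$ is nonzero because $\omega(\varphi_0)\in\mathbb N$, so $a_0,b_0\neq 0$. Put $v=\omega(\varphi_0)/n$. Then ({\sf A}) and ({\sf B1}) say that $\omega(\varphi_0)=nv$, $\omega(\varphi_n)=0$ and $\omega(\varphi_j)\le(n-j)v$ for $1\le j<n$; that is, the points $(j,\omega(\varphi_j))$ lie on or below the segment of slope $-v$ joining $(0,nv)$ to $(n,0)$. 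The decisive arithmetic input is that $\omega$ is $\mathbb N_0$-valued while $\gcd(n,\omega(\varphi_0))=1$ forces $\lambda\,\omega(\varphi_0)/n\notin\mathbb N_0$ for $1\le\lambda<n$; in particular neither $v$ nor $2v$ lies in $\mathbb N_0$ (here $n\ge 3$), so \emph{no element of $R$ has valuation $v$ or $2v$}. The whole lemma will follow once we show that the factorization forces $\omega(b_0)=2v$.

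I would organize the argument by the position of $\omega(b_1)$ relative to $v$ and of $\omega(b_0)$ relative to $2v$ (equivalently, by the shape of the Newton polygon of $h$), which yields three exhaustive cases. In the principal case $\omega(b_1)\le v$ and $\omega(b_0)\le 2v$ I run a downward induction on the relations
$$
\varphi_{i+2}=a_ib_2+a_{i+1}b_1+a_{i+2}b_0,\qquad i=n-2,n-3,\ldots,0,
$$
(with the convention of Remark~\ref{rmk1}) to prove $\omega(a_i)\le(n-2-i)v$ for all $i$. Indeed $\omega(a_{n-2})=0$, and since $\omega(a_i)=\omega(a_ib_2)$, the relation together with Lemma~\ref{ba1}(iv) bounds $\omega(a_i)$ by the maximum of $\omega(\varphi_{i+2})\le(n-i-2)v$, of $\omega(a_{i+1})+\omega(b_1)$, and of $\omega(a_{i+2})+\omega(b_0)$; the inductive hypothesis applied to the already-treated higher indices together with $\omega(b_1)\le v$ and $\omega(b_0)\le 2v$ keeps all three at most $(n-i-2)v$. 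Taking $i=0$ gives $\omega(a_0)\le(n-2)v$, whence
$$
nv=\omega(\varphi_0)=\omega(a_0)+\omega(b_0)\le(n-2)v+\omega(b_0),
$$
so $\omega(b_0)\ge 2v$; combined with $\omega(b_0)\le 2v$ this gives $\omega(b_0)=2v$, the sought contradiction.

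There remain the two off-segment configurations, and this is where the real work and the casework labelled {\sf Case P1}--{\sf Case P3} live. If $\omega(b_1)\le v$ but $\omega(b_0)>2v$, I run the opposite, large-$b_0$ chase: the term $a_{i+2}b_0$ dominates the above relation, so that $\omega(a_i)=\omega(a_{i+2})+\omega(b_0)$ and the valuations grow in steps of two as $i$ decreases from $n-2$. For even $n$ this reaches $a_0$ and yields $\omega(a_0)>(n-2)v$ directly, while for odd $n$ it reaches $a_1$, and one invokes $\varphi_1=a_0b_1+a_1b_0$ (where $a_1b_0$ is too large to survive alone) to get the same lower bound on $\omega(a_0)$. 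Either way $\omega(\varphi_0)=\omega(a_0)+\omega(b_0)>nv$, a contradiction; this is the exact analogue of Claim~1 in the proof of Lemma~\ref{degh=1-lemma}. The genuinely delicate case, and the one I expect to be the main obstacle, is $\omega(b_1)>v$: then $(1,\omega(b_1))$ sits strictly above the segment of slope $-v$, the Newton polygon of $h$ acquires a vertex, and the clean dominant-term bookkeeping breaks down because in several relations two summands now share the maximal valuation and may cancel, so Lemma~\ref{ba1}(iv) no longer pins $\omega(a_i)$. Here I would instead eliminate the unknowns $a_i$ altogether to produce the degree-$2$ analogue of the elimination identity~(\ref{degh=1-lab4}): reducing $f(\rho)=0$ for a root $\rho$ of $h$ modulo $h$ yields two genuine identities among the $\varphi_j$ and $b_0,b_1,b_2$ governed by the Lucas-type recurrence attached to $h$, and tracking $\omega$ of their terms should again force a forbidden value $v$ or $2v$. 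Since for $t=2$ the statement carries no upper bound on $n$, this final elimination must be carried through uniformly in $n$, which is the source of the length the lemma's proof requires.
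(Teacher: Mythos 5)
Your first two cases are sound, and in fact they reorganize rather cleanly the portion of the paper's proof where $\omega (b_1)\leq \omega (\varphi _0)/n$: your Case~1 induction $\omega (a_i)\leq (n-2-i)v$ is the same dominant-term bookkeeping that yields (\ref{degh=2-lemma-label0}) and its relatives in the paper's \textsf{Case S1} branches, and your Case~2 step-two chase is essentially the paper's Claim~1 inside \textsf{Case T1} (your even/odd endgame matches the paper's $n=2m$ versus $n=2m-1$ split). The genuine gap is your third case, $\omega (b_1)>v$. That case is not a residual configuration to be mopped up by one more trick: it is where almost the entire proof of the lemma lives (the paper's \textsf{Case S2}, \textsf{Case P2-1}, \textsf{Case P2-2}, and above all \textsf{Case S2-1}--\textsf{S2-3} with the subcases \textsf{T1} and \textsf{T2} and two separate claims). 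For it you offer only a strategy --- ``tracking $\omega$ of their terms \emph{should} again force a forbidden value'' --- with no execution, so the lemma is not proved.

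Worse, the proposed elimination route has a concrete obstruction, not just missing details. For $t=1$ the resolvent identity (\ref{degh=1-lab4}) works because each coefficient $b_1^{n-j-1}b_0^{j}$ is a \emph{single monomial}, whose valuation is exactly $(n-j-1)\omega (b_1)+j\omega (b_0)$. For $t=2$ the analogous Lucas-type coefficients are genuine sums of monomials $b_2^{\,\cdot}b_1^{\,i}b_0^{\,j}$ with integer coefficients, and in the borderline configuration $\omega (b_0)=2\omega (b_1)$ --- exactly the paper's \textsf{Case T2}, which it isolates as the hardest subcase --- all monomials with $i+2j$ fixed share the same valuation, so (D3-1) and Lemma \ref{ba1}(iv) pin down nothing, and the integer coefficients can cancel outright (recall $R$ is an arbitrary integral domain, possibly of positive characteristic, so nonzero integer combinations can vanish in $R$). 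A lower bound on the valuation of those resolvent coefficients is precisely what your plan needs and precisely what is unavailable there; the paper gets through this regime only via the combinatorial Claim~2 (the bound (\ref{degh=2-lemma-label20}) together with the statement that no two consecutive strict inequalities occur), and your sketch contains no substitute for it. Until the case $\omega (b_1)>v$ --- in particular the subcase $2\omega (b_1)=\omega (b_0)$ --- is handled uniformly in $n$, the argument establishes only the easy half of the lemma.
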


\begin{proof}
Suppose to the contrary. Then we have $g=a_{n-2}X^{n-2}+\cdots +a_1X+a_0$ and $h=b_2X^2+b_1X+b_0$, where by Lemma \ref{obvious lemma} we in particular have that $\omega (a_{n-2})=0=\omega (b_2)$. And thus we have the following polynomial coefficients equalities:
 \begin{equation*}
  \begin{split}
    a_1b_0+a_0b_1 & =\varphi _1 \qquad\qquad\qquad\qquad\qquad\qquad   (E\! :\! 1) \\
    a_2b_0 + a_1b_1 +a_0b_2 & =\varphi _2 \qquad\qquad\qquad\qquad\qquad\qquad   (E\! :\! 2) \\
    a_3b_0 +a_2b_1 +a_1b_2 & =\varphi _3 \qquad\qquad\qquad\qquad\qquad\qquad   (E\! :\! 3) \\
       &  \cdots \qquad\qquad\qquad  \qquad\qquad\qquad\quad\;\; \cdots \\
   a_{n-3}b_0 +a_{n-4}b_1+ a_{n-5}b_2 & =\varphi _{n-3} \qquad\qquad\qquad\qquad\qquad\quad   (E\! :\! \text{n\!\! --\! 3}) \\
   a_{n-2}b_0 +a_{n-3}b_1+ a_{n-4}b_2 & =\varphi _{n-2} \qquad\qquad\qquad\qquad\qquad\quad   (E\! :\! \text{n\!\! --\! 2}) \\
   a_{n-2}b_1 +a_{n-3}b_2 & =\varphi _{n-1} \qquad\qquad\qquad\qquad\qquad\quad   (E\! :\! \text{n\!\! --\! 1})
   \end{split}
  \end{equation*}
Our argument will distinguish the following three possibilities:
\begin{itemize}
 \item[{\sf Case}]{\sf P1}. $\omega (a_0) > (n-2)\omega (\varphi _0)/n$.
 \item[{\sf Case}]{\sf P2}. $\omega (a_0) < 2\omega (\varphi _0)/n$.
 \item[{\sf Case}]{\sf P3}. $2\omega (\varphi _0)/n < \omega (a_0)< (n-2)\omega (\varphi _0)/n$.
\end{itemize}

\vspace{0.5ex}

Let us emphasize that in what follows we always bear in mind that for our polynomial $f$ both of the conditions {\sf (A)} and {\sf (B1)} do hold. Also for $\omega$ we have fulfilled the conditions (D1), (D2-1) and (D3-1). Furthermore, we freely use Lemma \ref{ba1}, mostly without explicit mentioning.

$\bullet$ We begin by proving that the {\sf Case P1} is impossible. In order to do that suppose to the contrary, and note that now $\omega (b_0) < 2\omega (\varphi _0)/n$. We refine our argumentation by considering two subcases: {\sf Case S1} and {\sf Case S2}.

\begin{itemize}
 \item[{\sf Case}]{\sf S1}. $\omega (b_1)<\omega (\varphi _0)/n$.
\end{itemize}

\noindent
Then by (E\! :\! \text{n\!\! --\! 1}) we deduce that necessarily $\omega (a_{n-3})< \omega (\varphi _0)/n$. As now we have that both $\omega (a_{n-3}b_1)$ and $\omega (a_{n-2}b_0)$ are less than $2\omega (\varphi _0)/n$, by (E\! :\! \text{n\!\! --\! 2}) it is clear that we must have $\omega (a_{n-4})< 2\omega (\varphi _0)/n$. By inductive argument one can see at once that
\begin{equation}
\label{degh=2-lemma-label0}
\omega (a_j) < \frac{n-j-2}{n}\omega (\varphi _0), \quad \text{ for $1\leq j\leq n-3$}.
\end{equation}
Hence  it follows that both $\omega (a_2b_0)$ and $\omega (a_1b_1)$ are less than $(n-2)\omega (\varphi _0)/n$. Using Lemma \ref{ba1}(iv) by (E\! :\! 2) we conclude that $\omega (a_0)<(n-2) \omega (\varphi _0)$; a contradiction.

\begin{itemize}
 \item[{\sf Case}]{\sf S2}. $\omega (b_1)>\omega (\varphi _0)/n$.
\end{itemize}

\noindent
Now by (E\! :\! \text{n\!\! --\! 1}) it is clear that necessarily
\begin{equation}
 \label{degh=2-lemma-label1}
\omega (a_{n-3})= \omega (b_1),
\end{equation}
and therefore that $\omega (a_{n-3}b_1)>2\omega (\varphi _0)/n$. Hence by (E\! :\! \text{n\!\! --\! 2}) it is immediate that
\begin{equation}
 \label{degh=2-lemma-label2}
\omega (a_{n-4})=\omega (a_{n-3}b_1)=2\omega (b_1).
\end{equation}
Next observe that by (\ref{degh=2-lemma-label1}) and the last equality we have that $\omega (a_{n-3}b_0)< \omega (a_{n-4}b_1)$. Using this, by (E\! :\! \text{n\!\! --\! 3}) it follows at once that necessarily
\begin{equation}
 \label{degh=2-lemma-label3}
\omega =\omega (a_{n-4}b_1)=3\omega (b_1).
\end{equation}
Proceeding by induction we see that
\begin{equation}
 \label{degh=2-lemma-label4}
\omega (a_j)= (n-j-2)\omega (b_1), \quad \text{ for $0\leq j\leq n-3$}.
\end{equation}
As now we have that $\omega (b_0)< 2\omega (b_1)$, using the above equalities for $j=0,1$ it is easy to deduce that $\omega (a_1b_0)< \omega (a_0b_1)$, and therefore by (E\! :\! 1) we conclude that
$$
\omega (\varphi _1) =\omega (a_0b_1)= (n-1)\omega (b_1) > (n-1)\omega (\varphi _0)/n;
$$
a contradiction to {\sf (B1)}.

\vspace{0.5ex}

$\bullet$ As the next step we will prove a more complicated claim, that the {\sf Case P2} is impossible. Again we suppose to the contrary, and note that then for the coefficient $b_0$ the inequality $\omega (b_0) > (n-2)\omega (\varphi _0)/n$ holds. And here we consider two subcases.

\begin{itemize}
 \item[{\sf Case}]{\sf P2\! -1}. $\omega (a_1)>\omega (\varphi _0)/n$.
\end{itemize}

\noindent
Then we have that $\omega (a_1b_0) > (n-1)\omega (\varphi _0)/n$ and therefore by (E\! :\! 1) it follows that necessarily $\omega (a_1b_0)= \omega (a_0b_1)$. But as we have $\omega (b_0)-\omega (a_0) > (n-4)\omega (\varphi _0)/n$, by the last equality we deduce that
\begin{equation}
 \label{degh=2-lemma-label6}
\omega (b_1) > \Bigl( \frac{n-4}{n} +\frac{1}{n}\Bigr) \omega (\varphi _0)= \frac{n-3}{n}\omega (\varphi _0).
\end{equation}
Hence further observe that again (\ref{degh=2-lemma-label1}) holds. Also note that here we have the inequality
\begin{equation}
 \label{degh=2-lemma-label7}
\omega (b_0) -\omega (b_1)= \omega (a_0) -\omega (a_1) < \omega (\varphi _0)/n.
\end{equation}
But as we have $n\geq 4$, (\ref{degh=2-lemma-label6}) implies that in particular $\omega (b_1) > \omega (\varphi _0)/n$. Now by (\ref{degh=2-lemma-label1}) we deduce that $\omega (b_0)<\omega (a_{n-3}b_1)=2\omega (b_1)$. As we have, again by (\ref{degh=2-lemma-label6}), that
$$
\omega (a_{n-3}b_1)> 2(n-3)\omega (\varphi _0)/n \geq 2\omega (\varphi _0)/n,
$$
by (E\! :\! \text{n\!\! --\! 2}) we finally obtain that necessarily again (\ref{degh=2-lemma-label2}) holds. Similarly as for the {\sf Case S2} by inductive argument we prove that again (\ref{degh=2-lemma-label4}) holds. And then in particular $\omega (a_0)= (n-2) \omega (b_1)$, as there. By using one more time that $n\geq 4$ we conclude that
$$
\omega (a_0) > (n-2)\omega (\varphi _0)/n \geq 2\omega (\varphi _0)/n;
$$
a contradiction.

\begin{itemize}
 \item[{\sf Case}]{\sf P2\! -2}. $\omega (a_1)<\omega (\varphi _0)/n$.
\end{itemize}

\noindent
Here first note that necessarily $a_2\neq 0$. To se this, suppose to the contrary. Then (E\! :\! 3) becomes $a_3b_0+a_1b_2=\varphi _3$. We claim that $a_3$ must be zero as well. Namely, otherwise we would have that $\omega (a_3b_0)\geq \omega (b_0)$, and therefore
$$
\omega (\varphi _3)=\omega (a_3b_0+a_1b_2)=\omega (a_3b_0) > (n-2)\omega (\varphi _0)/n;
$$
a contradiction to {\sf (B1)}. By inductive argument we obtain that $a_j=0$ for every $3\leq j\leq n-3$. Thus in particular (E\! :\! \text{n\!\! --\! 2}) becomes an equality $a_{n-2}b_0=\varphi _{n-2}$. And therefore
$$
\omega (\varphi _{n-2})=\omega (b_0) > (n-2)\omega (\varphi _0)/n\geq 2\omega (\varphi _0)/n;
$$
again a contradiction.

Now when we know that $a_2$ is nonzero, by Lemma \ref{ba1}(iii) it follows that $\omega (a_2b_0) \geq \omega (b_0)$. As we also have that
$$
\omega (a_0)< 2\omega (\varphi _0)/n \leq (n-2)\omega (\varphi _0)/n <\omega (b_0),
$$
by (E\! :\! 2) we conclude that necessarily
\begin{equation}
 \label{degh=2-lemma-label8}
\omega (a_2b_0) =\omega (a_1b_1).
\end{equation}
And hence we have that
\begin{equation}
 \label{degh=2-lemma-label9}
\omega (b_1) =\omega (a_2b_0)- \omega (a_1) > \Bigl( \frac{n-2}{n}-\frac{1}{n}\Bigr) \omega (\varphi _0)= \frac{n-3}{n}\omega (\varphi _0);
\end{equation}
cf. (\ref{degh=2-lemma-label6}). As a consequence, (\ref{degh=2-lemma-label1}) holds again.

At this point we distinguish two more sub-subcases:
\begin{itemize}
 \item[{\sf Case}]{\sf P2\!\! -2\!\! -1}. $\omega (b_0) > (n-1)\omega (\varphi _0)/n$.
 \item[{\sf Case}]{\sf P2\!\! -2\!\! -2}. $\omega (b_0) < (n-1)\omega (\varphi _0)/n$.
\end{itemize}

\noindent
Suppose the first sub-subcase. Then in the same way as for (\ref{degh=2-lemma-label9}) we have that now $\omega (b_1) > (n-2)\omega (\varphi _0)/n$. And therefore using (\ref{degh=2-lemma-label1}) and the fact $n\geq 4$ it follows that
$$
\omega (a_{n-3}b_1) = 2\omega (b_1)> (2n-4)\omega (\varphi _0)/n \geq \omega (\varphi _0)\geq  \omega (b_0).
$$
Hence by (E\! :\! \text{n\!\! --\! 2}) we deduce that here (\ref{degh=2-lemma-label2}) holds as well. By the same reasoning as for the {\sf Case S2} we obtain a contradiction to the condition {\sf (B1)}.

Next suppose the {\sf Case P2-2-2}. Then we again have (\ref{degh=2-lemma-label9}) and therefore using (\ref{degh=2-lemma-label1}) for $n\geq 5$ we obtain that
$$
\omega (a_{n-3}b_1) = 2\omega (b_1)> (2n-6)\omega (\varphi _0)/n \geq (n-1)\omega (\varphi _0)/n >  \omega (b_0).
$$
The rest of the argument is exactly the same one as for the first sub-subcase. Here the case $n=4$ must be treated separately. The details are left to the reader.

\vspace{0.5ex}

$\bullet$ It remains to prove the most complicated claim, that the {\sf Case P3} is impossible. First note that now we also have for the polynomial coefficient $b_0$ the inequalities
\begin{equation}
 \label{degh=2-lemma-label10}
2\omega (\varphi _0)/n < \omega (b_0) < (n-2)\omega (\varphi _0)/n.
\end{equation}
Again we will separately treat the same two subcases as we did in {\sf Case P1}.

First consider the {\sf Case S2}, which is more involved. As before, here we have the equality (\ref{degh=2-lemma-label1}). Let us treat the following three mutually exclusive sub-subcases:

\begin{itemize}
 \item[{\sf Case}]{\sf S2\!\! -1}. $\omega (a_{n-4}) < \omega (b_0)$.
 \item[{\sf Case}]{\sf S2\!\! -2}. $\omega (a_{n-4}) > \omega (b_0)$.
 \item[{\sf Case}]{\sf S2\!\! -3}. $\omega (a_{n-4}) = \omega (b_0)$.
\end{itemize}

\vspace{0.5ex}

\noindent
Suppose we have the first one sub-subcase. Then, taking into account (\ref{degh=2-lemma-label1}) and (\ref{degh=2-lemma-label10}), by the equality (E\! :\! \text{n\!\! --\! 2}) it follows that
\begin{equation}
 \label{degh=2-lemma-label11}
\omega (b_0)= \omega (a_{n-3}b_1)= 2\omega (b_1).
\end{equation}
Further as we are in the {\sf Case S2-1} it is clear that
\begin{equation}
 \label{degh=2-lemma-label12}
\omega (a_{n-3}b_0) > \omega (a_{n-4}b_1).
\end{equation}
As we have that
$$
\omega (a_{n-3}b_0)= \omega (b_1)+ \omega (b_0)= 3\omega (b_1) > 3\omega (\varphi _0)/n,
$$
by (\ref{degh=2-lemma-label12}) and the equality (E\! :\! \text{n\!\! --\! 3}) it is necessarily
\begin{equation}
 \label{degh=2-lemma-label12A}
\omega (a_{n-5})= \omega (a_{n-3}b_0)= 3\omega (b_1).
\end{equation}
Proceeding inductively one can see that
\begin{equation}
 \label{degh=2-lemma-label13}
\omega (a_{n-k-2}) =k \omega (b_1),
\end{equation}
for $k>2$; which is in fact the equality in (\ref{degh=2-lemma-label4}), written in an equivalent way. (Note that for $k=2$ we get $\omega (a_{n-4}) < \omega (b_0)=2\omega (b_1)$.) In particular for $k=n-5$ and $k=n-4$ we have $\omega (a_3)=(n-5)\omega (b_1)$ and $\omega (a_2)=(n-4)\omega (b_1)$. So by (\ref{degh=2-lemma-label11}) we also have that
$$
\omega (a_3b_0) =(n-3)\omega (b_1)=\omega (a_2b_1).
$$
Now it is plain that by (E\! :\! 3) we have the inequality
\begin{equation}
 \label{degh=2-lemma-label14}
\omega (a_1)\leq (n-3)\omega (b_1).
\end{equation}
Suppose that moreover the last inequality is strict. Then we would have that $\omega (a_1b_1)< (n-2)\omega (b_1)$. And at the same time by (\ref{degh=2-lemma-label11}) and (\ref{degh=2-lemma-label13}) it would follow that
$$
\omega (a_2b_0)= \omega (a_2)+ \omega (b_0)= \bigl( (n-4)+2\bigr) \omega (b_1) =(n-2)\omega (b_1).
$$
Thus we have the inequalities
$$
\omega (a_0) < \frac{n-2}{n}\omega (\varphi _0) <\omega (a_2b_0).
$$
By this applied to (E\! :\! 2) we would in particular have that
$$
\omega (\varphi _2)= \omega (a_2b_0) > (n-2)\omega (\varphi _0)/n;
$$
a contradiction. As a consequence, we see that in (\ref{degh=2-lemma-label14}) the equality holds. Finally it is easy to check that we have $\omega (a_1b_0) > \omega (a_0b_1)$. Thus by (E\! :\! 1) we conclude that
$$
\omega (\varphi _1) =\omega (a_1b_0) =(n-1)\omega (b_1) > (n-1) \omega (\varphi _0)/n;
$$
a contradiction.

Now consider the {\sf Case S2-2}. A similar argument as in the {\sf Case S2}  for {\sf Case P1} gives that here (\ref{degh=2-lemma-label2}) holds as well. Further we have the inequality $\omega (a_{n-3}b_0) < \omega (a_{n-4}b_1)$; cf. (\ref{degh=2-lemma-label12}). And hence by (E\! :\! \text{n\!\! --\! 3})
we obtain that
$$
\omega (a_{n-5})= \omega (a_{n-4}b_1)= 3\omega (b_1);
$$
cf. (\ref{degh=2-lemma-label12A}). In fact by inductive reasoning as we did for the {\sf Case S2-1} it follows that again (\ref{degh=2-lemma-label13}) holds; but this time for $k\geq 2$. The rest of the argument is the same one as there.

It remains to treat the {\sf Case S2-3}, which is more demanding. Now first observe that by (\ref{degh=2-lemma-label1}) the equality (E\! :\! \text{n\!\! --\! 2}) gives the inequality
\begin{equation}
 \label{degh=2-lemma-label15}
\omega (a_{n-3}b_1)=2\omega (b_1)\leq \omega (b_0).
\end{equation}
Indeed, by assuming the opposite and using the inequality of the {\sf Case S2}, it would follow that $$
\omega (\varphi _{n-2}) = \omega (a_{n-3}b_1)> 2\omega (\varphi _0)/n;
$$
a contradiction. As we already did that several times before, here for the {\sf Case S2-3} we also need to refine our analysis in a way that we will consider two subcases for (\ref{degh=2-lemma-label15}); that will be called {\sf Case T1} and {\sf Case T2}.

\begin{itemize}
 \item[{\sf Case}]{\sf T1}. $2\omega (b_1)<\omega (b_0)$.
\end{itemize}

\noindent
By (\ref{degh=2-lemma-label1}) we have
$$
\omega (a_{n-3}b_0)= \omega (b_1)+ \omega (a_{n-4})= \omega (a_{n-4}b_1).
$$
And hence by (E\! :\! \text{n\!\! --\! 3}) it follows that
\begin{equation}
 \label{degh=2-lemma-label16}
\omega (a_{n-5}) \leq \omega (a_{n-4}b_1)= \omega (b_0)+ \omega (b_1).
\end{equation}
Next we consider (E\! :\! \text{n\!\! --\! 4}), and note that
$$
\omega (a_{n-5}b_1)\leq \omega (b_0)+ 2\omega (b_1)< 2\omega (b_0)= \omega (a_{n-4}b_0).
$$
As we have that $\omega (a_{n-4}b_0)=2\omega (b_0) > 4\omega (\varphi _0)/n$, it is immediate that necessarily
\begin{equation}
\label{degh=2-lemma-label17}
 \omega (a_{n-6})= \omega (a_{n-4}b_0)= 2\omega (b_0).
\end{equation}
Let us now state the following auxiliary claim.
\begin{claim1}
For every $k\leq \bigl\lceil \frac{n-3}{2}\big\rceil$ we have the following:
 \begin{equation*}
  \begin{split}
  \omega (a_{n-2k-2}) & = k\omega (b_0) ; \\
  \omega (a_{n-2k-3}) & \leq k\omega (b_0) + \omega (b_1).
  \end{split}
 \end{equation*}
\end{claim1}
In order to prove this Claim we proceed by induction, where the base of the induction, for $k=1$, is given by the condition {\sf Case S2-3} itself and the inequality (\ref{degh=2-lemma-label16}). The details of the inductive step are left to the reader.

Now we use the above Claim for concluding our argument. First suppose $n=2m$; i.e., $n$ is even. Then for $k=m-2$ we get
$$
\omega (a_2)= (m-2)\omega (b_0) \quad \text{ and } \quad \omega (a_1)\leq (m-2)\omega (b_0) +\omega (b_1).
$$
As we are in the {\sf Case T1}, it follows that
$$
\omega (a_2b_0) > (2m-2)\omega (b_1) > (n-2)\omega (\varphi _0)/n.
$$
We also have that $\omega (a_2b_0) > \omega (a_1b_1)$. Thus by (E\! :\! 2) we deduce that necessarily
$$
\omega (a_0) =\omega (a_2b_0) > (n-2)\omega (\varphi _0)/n;
$$
a contradiction.

Next suppose that $n=2m-1$; i.e., $n$ is odd. Then for $k=(n-3)/2$ we get
$$
\omega (a_1)= (n-3)\omega (b_0)/2 \quad \text{ and } \quad \omega (a_2)\leq (n-5)\omega (b_0)/2 +\omega (b_1).
$$
Further observe that now $\omega (a_2b_0) \leq \omega (a_1b_1)$. And so by (E\! :\! 2) we obtain that
$$
\omega (a_0) \leq \omega (a_1b_1) =(n-3)\omega (b_0)/2 +\omega (b_1).
$$
But then as we are still in the {\sf Case T1} it follows that
$$
\omega (a_1b_0) > (n-3)\omega (b_0)/2 +2\omega (b_1) \geq \omega (a_0b_1).
$$
Finally, by (E\! :\! 1) we deduce that
$$
\omega (\varphi _1)= \omega (a_1b_0) = \frac{n-1}{2}\omega (b_0) > \frac{n-1}{n}\omega (\varphi _0);
$$
again a contradiction.

\begin{itemize}
 \item[{\sf Case}]{\sf T2}. $2\omega (b_1)=\omega (b_0)$.
\end{itemize}

\noindent
For the ease of following let us first perform some calculations. At the beginning note that we again have (\ref{degh=2-lemma-label16}). Now if we
have the strict inequality there, then by  (E\! :\! \text{n\!\! --\! 4}) it follows that  (\ref{degh=2-lemma-label17}) holds as well. On the other hand if $\omega (a_{n-5})=\omega (b_0) + \omega (b_1)$, then in particular $\omega (a_{n-5}b_1)=\omega (a_{n-4}b_0)$. And then by  (E\! :\! \text{n\!\! --\! 4}) we deduce that
\begin{equation}
\label{degh=2-lemma-label18}
 \omega (a_{n-6})\leq 2\omega (b_0).
\end{equation}
Next consider the equality (E\! :\! \text{n\!\! --\! 5}). For the strict inequality in  (\ref{degh=2-lemma-label16}) it is immediate that then $\omega (a_{n-5}b_0)< \omega (a_{n-6}b_1)$, and therefore that necessarily
$$
\omega (a_{n-7})=\omega (a_{n-6}b_1)= 5\omega (b_1).
$$
On the other hand if we have the equality in (\ref{degh=2-lemma-label16}), it will follow that necessarily
\begin{equation}
\label{degh=2-lemma-label19}
 \omega (a_{n-7})\leq 5\omega (b_1).
\end{equation}
Further for the equality in  (\ref{degh=2-lemma-label16}) but the strict inequality in  (\ref{degh=2-lemma-label18}) we have that
$$
\omega (a_{n-6}b_1)< 2\omega (b_0)+ \omega (b_1)= \omega (a_{n-5}b_0).
$$
And so we deduce that in  (\ref{degh=2-lemma-label19}) is moreover the equality.

Now assume that we have proved that for some $k$ the following hold:
$$
\omega (a_{n-k})=(k-2)\omega (b_1)\quad \text{ and }\quad \omega (a_{n-k-1})<(k-1)\omega (b_1).
$$
Consider the equality (E\! :\! \text{n\!\! --\! k}).
As then we have that $\omega (a_{n-k-1}b_1) < \omega (a_{n-k}b_0)$, it follows that necessarily
$
\omega (a_{n-k-2})=\omega (a_{n-k}b_0)= k\omega (b_1).
$
As another possibility assume that for some $k$ we have proved that both
$$
\omega (a_{n-k}) < (k-2)\omega (b_1)\quad \text{ and }\quad \omega (a_{n-k-1}) = (k-1)\omega (b_1).
$$
Now we have the inequality $\omega (a_{n-k}b_0) < \omega (a_{n-k-1}b_1)$, and therefore necessarily $
\omega (a_{n-k-2})=\omega (a_{n-k-1}b_1)= k\omega (b_1).
$
As the third possibility assume we have both
$$
\omega (a_{n-k}) = (k-2)\omega (b_1)\quad \text{ and }\quad \omega (a_{n-k-1}) = (k-1)\omega (b_1).
$$
Here one can see at once that then
$$
\omega (a_{n-k-2}) \leq k\omega (b_1).
$$
As a conclusion of the above computations it is clear that the following claim hold.

\begin{claim2}
For every $k\leq n-1$ we have the inequality
\begin{equation}
\label{degh=2-lemma-label20}
 \omega (a_{n-k})\leq (k-2)\omega (b_1),
\end{equation}
and in particular there is no $j<n-1$ such that for both $k=j$ and $k=j+1$ we have the strict inequalities in \textup{(\ref{degh=2-lemma-label20})}.
\end{claim2}

Thus by the last claim we have that in particular it must necessarily hold
\begin{equation}
\label{degh=2-lemma-label21}
 \omega (a_2)=(n-4)\omega (b_1) \quad \text{ or } \quad \omega (a_1)=(n-3)\omega (b_1).
\end{equation}
Suppose the latter equality. Then
$$
\omega (a_1b_0) =(n-1)\omega (b_1) > (n-1) \omega (\varphi _0)/n,
$$
and then by (E\! :\! 1) it must be that $\omega (a_1b_0)= \omega (a_0b_1)$. Hence it follows that
$$
\omega (a_0)=\omega (a_1b_0) -\omega (b_1) > \frac{n-2}{n}\omega (\varphi _0);
$$
a contradiction. Next suppose we have the first equality in (\ref{degh=2-lemma-label21}), but at the same time that $\omega (a_1) < (n-3)\omega (b_1)$. Then $\omega (a_2b_0)=(n-2)\omega (b_1) > \omega (a_1b_1)$, and so by (E\! :\! 2) we obtain that
$$
\omega (a_0)=\omega (a_2b_0) > \frac{n-2}{n}\omega (\varphi _0);
$$
again a contradiction.

What remains to do in order to finish the {\sf Case P3} is to consider its subcase {\sf Case S1}. (Note here that the argument given below is in fact very easy; but it seems there is no such ``simple trick'' while dealing with the {\sf Case S2}.) First note the inequality $\omega (a_0b_1) < (n-1)\omega (\varphi _0)/n$, and then by (E\! :\! 1) it follows that $\omega (a_1b_0) \leq (n-1) \omega (\varphi _0)/n$ as well. Hence,
$$
\omega (a_1) \leq \frac{n-1}{n} \omega (\varphi _0) -\omega (b_0) < \frac{n-3}{n} \omega (\varphi _0).
$$
As now we also have that $\omega (a_1b_1) < (n-2)\omega (\varphi _0)/n$, by (E\! :\! 2) we conclude that necessarily $\omega (a_2b_0) \leq (n-2)\omega (\varphi _0)/n$, and therefore $\omega (a_2) < (n-4)\omega (\varphi _0)/n$. By induction it is immediate that again (\ref{degh=2-lemma-label0}) holds.
In particular we have that
$$
\omega (a_{n-4}) < 2\omega (\varphi _0)/n \quad \text{ and } \quad \omega (a_{n-3}) < \omega (\varphi _0)/n.
$$
And so both $\omega (a_{n-4}b_2)$ and $\omega (a_{n-3}b_1)$ are less than $2\omega (\varphi _0)/n$, while $\omega (a_{n-2}b_0) > 2\omega (\varphi _0)/n$. As a conclusion,
$
\omega (\varphi _2)= \omega (b_0) > 2\omega (\varphi _0)/n;
$
a contradiction. This finishes the proof of our lemma.
\end{proof}

Of course we in particular claim that for $n\geq 6$ the case $(s,t)=(n-3,3)$ is also impossible; but we do not have a proof for that. The best we can do is to prove the following lemma which is our second decisive step for the proof of Theorem \ref{THMDumas1kind}. Note that our argument for the lemma is surprisingly involved.

\begin{lem}
\label{degh=3-lemma}
Let $\omega$ be a Dumas valuation of the first kind, defined on an integral domain $R$. Let $f\in R[X]$ be a polynomial as in the Conjecture, of degree $n=6$ or $7$. Then there are no polynomials $g,h\in R[X]$ such that $f=gh$ and $\deg h=3$.
\end{lem}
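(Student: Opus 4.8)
The plan is to argue by contradiction, in the spirit of Lemma \ref{degh=2-lemma}. Suppose $f=gh$ with $\deg h=3$ and $\deg g=s=n-3$, and write $g=a_sX^s+\cdots+a_1X+a_0$ and $h=b_3X^3+b_2X^2+b_1X+b_0$. By Lemma \ref{obvious lemma} we have $\omega(a_s)=0=\omega(b_3)$, while comparing the coefficients of $f=gh$ gives
$$
\varphi_0=a_0b_0,\quad \varphi_1=a_0b_1+a_1b_0,\quad \varphi_2=a_0b_2+a_1b_1+a_2b_0,
$$
then
$$
\varphi_k=a_{k-3}b_3+a_{k-2}b_2+a_{k-1}b_1+a_kb_0 \quad (3\le k\le n-3),
$$
and finally the two top equations
$$
\varphi_{n-2}=a_{n-5}b_3+a_{n-4}b_2+a_{n-3}b_1,\quad \varphi_{n-1}=a_{n-4}b_3+a_{n-3}b_2.
$$
Set $V=\omega(\varphi_0)$, so that $\omega(a_0)+\omega(b_0)=V$. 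By the Remark preceding Lemma \ref{degh=2-lemma}, the hypothesis $\gcd(n,V)=1$ upgrades every bound in {\sf (B1)} to a strict one, $\omega(\varphi_j)<\frac{n-j}{n}V$ for $1\le j\le n-1$; moreover no number of the form $\frac{\lambda}{n}V$ with $1\le \lambda<n$ is an integer, so no coefficient valuation, each lying in $\mathbb N_0$, can equal such a number.

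The guiding heuristic is the tropical Newton-polygon picture: if $\omega$ were a valuation on a field, conditions {\sf (A)} and {\sf (B1)} would force the Newton polygon of $f$ to be a single segment, and any factorization $f=gh$ would force the polygon of $h$ to have the same slope, whence $\omega(b_0)=\frac{3}{n}V$. As $\frac{3}{n}V\notin\mathbb N_0$ when $\gcd(n,V)=1$ and $n\in\{6,7\}$ (since $n\notdiv 3$), no such $h$ could exist. Over an arbitrary integral domain this factorization principle is not available, so the real task is to reconstruct it by hand, tracking the values $\omega(a_i),\omega(b_j)$ through the coefficient system by means of Lemma \ref{ba1}(iv). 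First I would split on the location of $\omega(a_0)$ relative to the thresholds $\frac{3}{n}V$ and $\frac{n-3}{n}V$: the two extreme cases $\omega(a_0)>\frac{n-3}{n}V$ and $\omega(a_0)<\frac{3}{n}V$, and the middle case $\frac{3}{n}V<\omega(a_0)<\frac{n-3}{n}V$. In each extreme case I would run a cascading induction along the coefficient equations, from the top equations downward (and, where needed, upward from the bottom equation $\varphi_1=a_0b_1+a_1b_0$), forcing the values $\omega(a_i)$ to lie along a line whose slope is incompatible with {\sf (B1)}; this produces an index $j$ with $\omega(\varphi_j)\ge\frac{n-j}{n}V$, contradicting the strict form of {\sf (B1)}, exactly as in Cases {\sf P1} and {\sf P2} of Lemma \ref{degh=2-lemma}.

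The middle case is the genuine obstacle, and it is here that the two extra intermediate coefficients $b_1,b_2$ — absent when $\deg h=2$ — multiply the bookkeeping. I would further subdivide according to the relative sizes of $\omega(b_0),\omega(b_1),\omega(b_2)$, equivalently according to which of the consecutive differences $\omega(b_0)-\omega(b_1)$, $\omega(b_1)-\omega(b_2)$, $\omega(b_2)$ dominate. In each such configuration the aim is to establish, by an induction modelled on the two Claims in the proof of Lemma \ref{degh=2-lemma}, that the sequence $\omega(a_s),\omega(a_{s-1}),\ldots,\omega(a_0)$ is an arithmetic progression with common difference the dominant slope inside $h$. Substituting the resulting values back into the equations for $\varphi_1$ and $\varphi_2$ (and, when $n=7$, the interior equation for $\varphi_3$) then either pins $\omega(b_0)$ to the forbidden value $\frac{3}{n}V$ or directly forces some $\omega(\varphi_j)$ above its {\sf (B1)} bound. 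Since every branch ends in one of these two contradictions, the middle case is ruled out, completing the proof; the two subcases $n=6$ (where $s=t=3$) and $n=7$ (where $s=4$) are handled by the same scheme.

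The main difficulty I anticipate is precisely the combinatorial proliferation of subcases in the middle case: with three potential slopes inside $h$ there are many mutually exclusive orderings of $\omega(b_0),\omega(b_1),\omega(b_2)$ to enumerate, and for each one the induction propagating the values $\omega(a_i)$ across the chain of equations must be arranged so that it terminates at the bottom equations with a genuine equality rather than an inconclusive inequality. Keeping the inductive hypotheses tight enough — as in the Claims of Lemma \ref{degh=2-lemma}, where one simultaneously carries an equality for one $\omega(a_i)$ and an inequality for the next — while making sure the enumeration is exhaustive, is the delicate and lengthy part. Restricting to $n\in\{6,7\}$ keeps the number of chains finite and the arithmetic tractable, which is exactly why the lemma is stated only in this range.
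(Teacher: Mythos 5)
Your outline reproduces the architecture of the paper's proof: the same contradiction setup with $\omega(a_s)=0=\omega(b_3)$, the same three-way split on $\omega(a_0)$ relative to $\frac{3}{n}\omega(\varphi_0)$ and $\frac{n-3}{n}\omega(\varphi_0)$ (the paper's \textsf{Case P1}--\textsf{P3} for $n=7$), the same further subdivision by the relative sizes of $\omega(b_0),\omega(b_1),\omega(b_2)$ (the paper's \textsf{S}, \textsf{U}, \textsf{V}, \textsf{W} cases), and the same propagation tool, Lemma \ref{ba1}(iv). But as written this is a plan, not a proof: every decisive step is announced (``I would run a cascading induction'', ``the aim is to establish'') and none is executed. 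For this particular lemma the mathematical content \emph{is} the exhaustive branch-by-branch derivation --- the paper spends several pages on roughly two dozen terminal branches for $n=7$ alone --- and nothing in your text certifies that your enumeration closes, i.e.\ that each configuration of the $\omega(b_j)$ actually forces the valuations $\omega(a_i)$ into a contradictory position rather than leaving an inconclusive inequality, a failure mode you yourself flag but do not resolve.

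There is also one concrete forecast that would fail. You claim every branch ends either by pinning $\omega(b_0)$ at the forbidden value $\frac{3}{n}\omega(\varphi_0)$ or by pushing some $\omega(\varphi_j)$ above its \textsf{(B1)} bound, and that the extreme cases go ``exactly as in Cases P1 and P2 of Lemma \ref{degh=2-lemma}.'' In the paper's \textsf{Case P2} (i.e.\ $\omega(a_0)<3\omega(\varphi_0)/7$), subcase \textsf{S2} with $\omega(a_2)=\omega(b_1)$ (\textsf{Case V3}, branches \textsf{W1}--\textsf{W3}), neither mechanism is available, and the paper explicitly notes that ``a new type of argument is needed'': iterating through the equations (E:1), (E:2) and (E:5) bootstraps the lower bounds --- e.g.\ $\omega(b_2)>6\omega(\varphi_0)/7$, then $\omega(b_2)>13\omega(\varphi_0)/7$, and so on --- so that $\omega(b_2)$ would exceed every natural number, which is the contradiction. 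Your taxonomy of terminal contradictions omits this divergence argument, so a cascade organized as you describe would stall in those branches. Likewise, your assertion that the middle case always yields an arithmetic progression for the $\omega(a_i)$ is too strong: in the analogue of \textsf{Case T2} of Lemma \ref{degh=2-lemma} one can only carry the weaker alternating hypothesis (an inequality $\omega(a_{n-k})\leq(k-2)\omega(b_1)$ with no two consecutive strict inequalities, as in Claim 2 there), and the paper's degree-$3$ proof correspondingly mixes equalities, one-sided bounds, and impossible-equality contradictions (both sides of a forced equality lying on opposite sides of a threshold $\frac{\lambda}{n}\omega(\varphi_0)$). So the approach is the right one, but the proposal has a genuine gap: the entire case analysis is deferred, and at least one indispensable argument type is absent from the plan.
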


\begin{proof}
We will only treat the case $n=7$, which is considerably more tedious than the case $n=6$ which will be left to the reader; cf. the proof of Lemma \ref{2Ddegh=3-lemma}. Our arguing is similar to the one of the previous lemma, but with more technical details and obstacles along the way. Note that some of our notation is exactly the same one as there; but this will make no confusion.

Here we have $g=a_4X^4+\cdots +a_1X+a_0$ and $h=b_3X^3+b_2X^2+b_1X+b_0$, where $\omega (a_4)=0=\omega (b_3)$. Then we have the following polynomial coefficients equalities:
 \begin{equation*}
  \begin{split}
    a_1b_0+a_0b_1 & =\varphi _1 \qquad\qquad\qquad\qquad\qquad\qquad   (E\! :\! 1) \\
    a_2b_0 + a_1b_1 +a_0b_2 & =\varphi _2 \qquad\qquad\qquad\qquad\qquad\qquad   (E\! :\! 2) \\
    a_3b_0 +a_2b_1 +a_1b_2 + a_0b_3 & =\varphi _3 \qquad\qquad\qquad\qquad\qquad\qquad  (E\! :\! 3) \\
    a_4b_0 +a_3b_1 +a_2b_2 + a_1b_3 & =\varphi _4 \qquad\qquad\qquad\qquad\qquad\qquad  (E\! :\! 4) \\
    a_4b_1 +a_3b_2 +a_2b_3 & =\varphi _5 \qquad\qquad\qquad\qquad\qquad\qquad  (E\! :\! 5) \\
    a_4b_2 +a_3b_3 & =\varphi _6 \qquad\qquad\qquad\qquad\qquad\qquad  (E\! :\! 6)
   \end{split}
  \end{equation*}
Let us distinguish the following three possibilities:
\begin{itemize}
 \item[{\sf Case}]{\sf P1}. $\omega (a_0) > 4\omega (\varphi _0)/7$.
 \item[{\sf Case}]{\sf P2}. $\omega (a_0) < 3\omega (\varphi _0)/7$.
 \item[{\sf Case}]{\sf P3}. $3\omega (\varphi _0)/7 < \omega (a_0)< 4\omega (\varphi _0)/7$.
\end{itemize}

\vspace{0.5ex}

Again recall that for our polynomial $f$ both of the conditions {\sf (A)} and {\sf (B1)} do hold. Also for $\omega$ the conditions (D1), (D2-1) and (D3-1) are fulfilled. And again the part (iv) of Lemma \ref{ba1} is at our hands all the time.

$\bullet$ We first prove that the {\sf Case P1} is impossible. So suppose to the contrary, and note that now $\omega (b_0) < 3\omega (\varphi _0)/7$. We refine our discussion by considering further two possibilities:
\begin{itemize}
 \item[{\sf Case}]{\sf S1}. $\omega (b_2) < \omega (\varphi _0)/7$.
 \item[{\sf Case}]{\sf S2}. $\omega (b_2) > \omega (\varphi _0)/7$.
\end{itemize}

Assume the {\sf Case S1}. First note that by (E\! :\! 6) we must have that
\begin{equation}
\label{degh=3-lemma-label1}
\omega (a_3)< \omega (\varphi _0)/7.
\end{equation}
Then consider the subcase
\begin{itemize}
 \item[{\sf Case}]{\sf S1\!\! -1}. $\omega (a_2) < 2\omega (\varphi _0)/7$.
\end{itemize}
As we have $\omega (a_3b_2)< 2\omega (\varphi _0)/7$, by (E\! :\! 5) it is immediate that
\begin{equation}
\label{degh=3-lemma-label2}
\omega (b_1)< 2\omega (\varphi _0)/7.
\end{equation}
Now it is clear that each of the values $\omega (a_4b_0)$, $\omega (a_3b_1)$ and $\omega (a_2b_2)$ is less than $3\omega (\varphi _0)/7$. And therefore by (E\! :\! 4) we conclude that $\omega (a_1)< 3\omega (\varphi _0)/7$ as well. Taking into account (\ref{degh=3-lemma-label1}) and (\ref{degh=3-lemma-label2}) it follows that each of the values $\omega (a_3b_0)$, $\omega (a_2b_1)$ and $\omega (a_1b_2)$ is less than $4\omega (\varphi _0)/7$. Hence by (E\! :\! 3) it follows that
\begin{equation}
\label{degh=3-lemma-label2A}
\omega (\varphi _3)=\omega (a_0b_3)>4 \omega (\varphi _0)/7;
\end{equation}
a contradiction. Next  consider the subcase
\begin{itemize}
 \item[{\sf Case}]{\sf S1\!\! -2}. $\omega (a_2) > 2\omega (\varphi _0)/7$.
\end{itemize}
This time by (E\! :\! 5) we have that
\begin{equation}
\label{degh=3-lemma-label3}
\omega (b_1)= \omega (a_2).
\end{equation}
Hence $\omega (a_0b_1)> 6\omega (\varphi _0)/7$ and then by (E\! :\! 1) it is necessarily that
\begin{equation}
\label{degh=3-lemma-label3B}
\omega (a_1b_0)= \omega (a_0b_1)
\end{equation}
holds. In particular note here that
\begin{equation}
\label{degh=3-lemma-label3A}
\omega (a_1)= \omega (a_0b_1)-\omega (b_0)> 3\omega (\varphi _0)/7.
\end{equation}
At this point we consider the following three sub-subcases:
\begin{itemize}
 \item[{\sf Case}]{\sf U1}. $\omega (a_3) < \omega (b_2)$.
 \item[{\sf Case}]{\sf U2}. $\omega (a_3) > \omega (b_2)$.
 \item[{\sf Case}]{\sf U3}. $\omega (a_3) = \omega (b_2)$.
\end{itemize}

Assume the {\sf Case U1}. Then by (\ref{degh=3-lemma-label3}) we have the inequality
$\omega (a_3b_1)< \omega (a_2b_2)$,
and therefore by (E\! :\! 4) it follows that
\begin{equation}
\label{degh=3-lemma-label4}
\omega (a_1)= \omega (a_2b_2).
\end{equation}
As we clearly have that $\omega (a_2) > 2\omega (b_2)$, by (\ref{degh=3-lemma-label3}) again and (\ref{degh=3-lemma-label4}) it is immediate that
\begin{equation}
\label{degh=3-lemma-label5}
\omega (a_2b_1)> \omega (a_1b_2).
\end{equation}
Next observe that $\omega (a_3b_0)< 4\omega (\varphi _0)/7$ and thus by (E\! :\! 3) we deduce that \begin{equation}
\label{degh=3-lemma-label6}
\omega (a_0)= \omega (a_2b_1)=2\omega (a_2).
\end{equation}
Now by (\ref{degh=3-lemma-label3}), (\ref{degh=3-lemma-label4}) and (\ref{degh=3-lemma-label6}) we see that the equality (\ref{degh=3-lemma-label3B}) is equivalent to the equality $\omega (b_2b_0)=2\omega (a_2)$. But the left-hand side of the later equality is less than $4\omega (\varphi _0)/7$, while the right-hand side is greater than this number; a contradiction.

Next assume the {\sf Case U2}. Then
\begin{equation}
\label{degh=3-lemma-label6A}
\omega (a_2b_2) < \omega (a_3b_1)
\end{equation}
holds and therefore by (E\! :\! 4) we obtain the equality
\begin{equation}
\label{degh=3-lemma-label7}
\omega (a_1)= \omega (a_3b_1).
\end{equation}
Using this we obtain that (\ref{degh=3-lemma-label5}) again holds. And (\ref{degh=3-lemma-label6}) holds as well. Finally using (\ref{degh=3-lemma-label7}) we deduce that this time the equality (\ref{degh=3-lemma-label3B}) is equivalent to $\omega (a_3b_0)=2\omega (a_2)$; which is one more time impossible.

It remains to assume the {\sf Case U3}. Here we in particular have that $\omega (a_3b_1)=\omega (a_2b_2)$ and thus by (\ref{degh=3-lemma-label3A}) and (E\! :\! 4) it follows that again both (\ref{degh=3-lemma-label4}) and (\ref{degh=3-lemma-label5}) do hold. The rest of the argument is exactly the same one as for the {\sf Case U1}.

\vspace{0.5ex}

Now assume the {\sf Case S2}. This time by (E\! :\! 6) it is immediate that
\begin{equation}
\label{degh=3-lemma-label8}
\omega (a_3)= \omega (b_2).
\end{equation}
We will consider the same two subcases as for the {\sf Case S1}. First assume that we have {\sf Case S1-1}. As here $\omega (a_3b_2)> 2\omega (\varphi _0)/7$ by (E\! :\! 5) we see that necessarily
\begin{equation}
\label{degh=3-lemma-label9}
\omega (b_1)= \omega (a_3b_2)=2\omega (b_2).
\end{equation}
Further observe that we have (\ref{degh=3-lemma-label6A}) and also that $\omega (a_3b_1)=3\omega (b_2)> \omega (b_0)$. Therefore by (E\! :\! 4) we conclude that
\begin{equation}
\label{degh=3-lemma-label10}
\omega (a_1)= \omega (a_3b_1)=3\omega (b_2).
\end{equation}
Moreover by (\ref{degh=3-lemma-label8}) and the last equality we have the inequality
\begin{equation}
\label{degh=3-lemma-label10X}
\omega (a_3b_0)< \omega (a_1b_2).
\end{equation}
Also by (\ref{degh=3-lemma-label9}) and (\ref{degh=3-lemma-label10}) we obtain at once that
\begin{equation}
\label{degh=3-lemma-label10A}
\omega (a_2b_1) < \omega (a_1b_2).
\end{equation}
Hence, using that $\omega (a_1b_2)= 4\omega (b_2) > 4\omega (\varphi _0)/7$, by (E\! :\! 3) we deduce that
\begin{equation}
\label{degh=3-lemma-label11}
\omega (a_0)= \omega (a_1b_2)=4\omega (b_2).
\end{equation}
Finally observe that now
$
\omega (a_1b_0)< \omega (a_0b_1).
$
And therefore, as we have that $\omega (a_0b_1) > 6\omega (\varphi _0)/7$, it follows that
\begin{equation}
\label{degh=3-lemma-label11A}
\omega (\varphi _1) =\omega (a_0b_1) >6\omega (\varphi _0)/7;
\end{equation}
a contradiction.

Now assume the {\sf Case S1-2}. We refine our argumentation via three sub-subcases:
\begin{itemize}
 \item[{\sf Case}]{\sf V1}. $\omega (a_2) > \omega (b_1)$.
 \item[{\sf Case}]{\sf V2}. $\omega (a_2) < \omega (b_1)$.
 \item[{\sf Case}]{\sf V3}. $\omega (a_2) = \omega (b_1)$.
\end{itemize}

Assume first the {\sf Case V1}. Then by (\ref{degh=3-lemma-label8}) and (E\! :\! 5) it is clear that necessarily
\begin{equation}
\label{degh=3-lemma-label12}
\omega (a_2)= 2\omega (b_2).
\end{equation}
Further we have
\begin{equation}
\label{degh=3-lemma-label12A}
\omega (a_3b_1) =\omega (b_1) +\omega (b_2) <\omega (a_2b_2),
\end{equation}
and also
\begin{equation}
\label{degh=3-lemma-label13}
\omega (a_2b_2) = 3\omega (b_2) > 3\omega (\varphi _0)/7 > \omega (b_0).
\end{equation}
Therefore by (E\! :\! 4) we obtain the equality (\ref{degh=3-lemma-label4}); i.e., that
\begin{equation}
\label{degh=3-lemma-label14}
\omega (a_1) = 3\omega (b_2).
\end{equation}
Next, using the inequalities in (\ref{degh=3-lemma-label13}) and one more time that (\ref{degh=3-lemma-label8}) holds, we obtain that
$$
\omega (a_3b_0) = \omega (b_2b_0) < 4\omega (b_2)= \omega (a_1b_2).
$$
Also by (\ref{degh=3-lemma-label12}) we see that again $\omega (a_2b_1) < \omega (a_1b_2)$. As a conclusion, by (E\! :\! 3) we get (\ref{degh=3-lemma-label11}). Then by (\ref{degh=3-lemma-label14}) we have the inequality $\omega (a_1b_1)< \omega (a_0b_2)$. And by
(\ref{degh=3-lemma-label11}) and (\ref{degh=3-lemma-label12}) it follows that $\omega (a_2b_0)< \omega (a_0b_2)$. Taking into account the last two obtained inequalities we deduce that
$$
\omega (\varphi _2) =\omega (a_0b_2) =5\omega (b_2) > 5\omega (\varphi _0)/7;
$$
a contradiction.

As the second step assume the {\sf Case V2}. It is straightforward to check that now we have (\ref{degh=3-lemma-label9}), (\ref{degh=3-lemma-label10}) and (\ref{degh=3-lemma-label11}), where exactly the same argument as for the {\sf Case S1-1} works here as well.

Next assume the {\sf Case V3}. This time by (E\! :\! 5) it is immediate that we must have
\begin{equation}
\label{degh=3-lemma-label15}
\omega (a_2)= \omega (b_1) \geq \omega (a_3b_2)= 2\omega (b_2) > 2\omega (\varphi _0)/7.
\end{equation}
Hence $\omega (a_0b_1) > 6\omega (\varphi _0)/7$ and therefore we again have (\ref{degh=3-lemma-label3B}). Thus it follows that (\ref{degh=3-lemma-label3A}) holds too, and so we have the inequality
\begin{equation}
\label{degh=3-lemma-label16}
\omega (a_2b_0) < \omega (a_1b_1).
\end{equation}
Using this, as by (\ref{degh=3-lemma-label3A}) and (\ref{degh=3-lemma-label15}) we have that $\omega (a_1b_1) > 5\omega (\varphi _0)/7$, it is clear that (E\! :\! 2) gives the equality
\begin{equation}
\label{degh=3-lemma-label17}
\omega (a_0b_2)= \omega (a_1b_1).
\end{equation}
Further as we clearly have the inequality $\omega (b_0)<\omega (a_1)$, it necessarily follows the inequality (\ref{degh=3-lemma-label10X}). Next we claim that (\ref{degh=3-lemma-label10A}) holds as well. Namely, by (\ref{degh=3-lemma-label3B}) we see that this inequality is equivalent to the inequality
$$
2\omega (b_1) < \bigl( \omega (a_0) +\omega (b_1) -\omega (b_0)\bigr) +\omega (b_2),
$$
which is further equivalent to the inequality $\omega (b_1b_0)<\omega (a_0b_2)$. But the last one is via (\ref{degh=3-lemma-label17}) equivalent to (\ref{degh=3-lemma-label16}). Now, using that $\omega (a_1b_2) > 4\omega (\varphi _0)/7$, by (E\! :\! 3), (\ref{degh=3-lemma-label10X}) and (\ref{degh=3-lemma-label10A}) it is clear that $\omega (a_0)=\omega (a_1b_2)$; cf. (\ref{degh=3-lemma-label11}). And finally we deduce that
$$
\omega (b_0)= \omega (a_0b_1)- \omega (a_1)= \bigl( \omega (a_1)+ \omega (b_2)\bigr) + \omega (b_1)- \omega (a_1)= \omega (b_1b_2) > 3\omega (\varphi _0)/7;
$$
where we used (\ref{degh=3-lemma-label15}) for the above inequality. But this is a contradiction.

\vspace{0.5ex}

$\bullet$ Our next task is to show that the {\sf Case P2} is impossible as well. In order to do that suppose to the contrary and first assume the {\sf Case S1}, when we again have (\ref{degh=3-lemma-label1}). Furthermore consider the subcase
\begin{itemize}
 \item[{\sf Case}]{\sf S1\!\! -3}. $\omega (b_1) < 2\omega (\varphi _0)/7$;
\end{itemize}
cf. (\ref{degh=3-lemma-label2}). In particular we have that $\omega (a_3b_2) < 2\omega (\varphi _0)/7$ and therefore by (E\! :\! 5) it follows that necessarily the inequality of the {\sf Case S1-1} holds as well. Hence it is immediate that both $\omega (a_2b_2)$ and $\omega (a_3b_1)$ are less than $3\omega (\varphi _0)/7$. By (E\! :\! 4) we conclude that necessarily
\begin{equation}
\label{degh=3-lemma-label18}
\omega (a_1)= \omega (b_0).
\end{equation}
And then $\omega (a_1b_0)=2\omega (b_0) >8\omega (\varphi _0)/7$, which by (E\! :\! 1) implies that we again have (\ref{degh=3-lemma-label3B}). Thus in particular
\begin{equation}
\label{degh=3-lemma-label20}
\omega (b_1)=\omega (a_1b_0)- \omega (a_0)> 5\omega (\varphi _0)/7;
\end{equation}
a contradiction. Next consider the subcase
\begin{itemize}
 \item[{\sf Case}]{\sf S1\!\! -4}. $\omega (b_1) > 2\omega (\varphi _0)/7$.
\end{itemize}
Now by (E\! :\! 5) it follows at once that we must have the equality (\ref{degh=3-lemma-label3}). Hence $\omega (a_2b_0)> 6\omega (\varphi _0)/7$. And also the inequality $\omega (a_0b_2)< 4\omega (\varphi _0)/7$ holds. By (E\! :\! 2) it is obvious that necessarily
\begin{equation}
\label{degh=3-lemma-label21}
\omega (a_2b_0)=\omega (a_1b_1),
\end{equation}
and therefore by (\ref{degh=3-lemma-label3}) we again have (\ref{degh=3-lemma-label18}). Further the same argument as before gives that also both (\ref{degh=3-lemma-label3B}) and (\ref{degh=3-lemma-label20}) do hold. Finally we consider (E\! :\! 3). By (\ref{degh=3-lemma-label1}) we have that  $\omega (a_3b_0)< 8\omega (\varphi _0)/7$, by (\ref{degh=3-lemma-label3}) and (\ref{degh=3-lemma-label20}) it is obvious that $\omega (a_2b_1)> 10\omega (\varphi _0)/7$ and by (\ref{degh=3-lemma-label18}) we deduce that also $\omega (a_1b_2)=\omega (b_0b_2)< 8\omega (\varphi _0)/7$. As a conclusion, $\omega (\varphi _3)= \omega (a_2b_1)$; which is impossible.

Next assume the {\sf Case S2}. Note that again (\ref{degh=3-lemma-label8}) holds. Besides we have that $\omega (a_3b_2) > 2\omega (\varphi _0)/7$, and so by (E\! :\! 5) we are in the {\sf Case S1-2} or {\sf Case S1-4}. Here we will again consider the before formulated three sub-subcases: {\sf Case V1} to {\sf Case V3}. Suppose the first one. Then in particular we have the equality (\ref{degh=3-lemma-label12}). Using this we easily get the inequality
\begin{equation}
\label{degh=3-lemma-label22}
\omega (a_0b_2)< \omega (a_2b_0).
\end{equation}
As we have that $\omega (a_2b_0) > 6\omega (\varphi _0)/7$, by (E\! :\! 2) we necessarily have one more time the equality (\ref{degh=3-lemma-label21}). Further note that it is equivalent to the equality
$$
\omega (a_2)-\omega (b_1) = \omega (a_1)-\omega (b_0),
$$
and therefore
\begin{equation}
\label{degh=3-lemma-label23}
\omega (a_1) > \omega (b_0) > 4\omega (\varphi _0)/7.
\end{equation}
Hence, using that $\omega (a_1b_0) > 8\omega (\varphi _0)/7$, by (E\! :\! 1) we deduce that necessarily (\ref{degh=3-lemma-label3B}) holds. And then it is immediate that (\ref{degh=3-lemma-label20}) holds as well. Thus we have that in fact
$$
\omega (a_2) > \omega (b_1) > 5\omega (\varphi _0)/7.
$$
Analogously as for the argument that the {\sf Case S1-4} is impossible, we have the inequalities $\omega (a_2b_1) > 10\omega (\varphi _0)/7 > \omega (a_3b_0)$. Using these by (E\! :\! 3) it follows that $\omega (a_2b_1)= \omega (a_1b_2)$, which is by (\ref{degh=3-lemma-label12}) further equivalent to the equality
\begin{equation}
\label{degh=3-lemma-label24}
\omega (a_1)=\omega (b_1)+ \omega (b_2).
\end{equation}
Hence by (\ref{degh=3-lemma-label20}) it follows that necessarily
$\omega (a_1) > 6\omega (\varphi _0)/7$; cf. (\ref{degh=3-lemma-label23}). Next we conclude that $\omega (a_1b_0) > 10\omega (\varphi _0)/7$, and then the same reason as for (\ref{degh=3-lemma-label20}) gives that moreover $\omega (b_1) > \omega (\varphi _0)$. So by (\ref{degh=3-lemma-label24}) we have that $\omega (a_1) > \omega (\varphi _0)$. At the same time, as we are in the {\sf Case V1}, by (\ref{degh=3-lemma-label8}) it follows that (\ref{degh=3-lemma-label12A}) holds here as well. Furthermore note that $\omega (a_1)< \omega (a_2b_2)$. As a conclusion of all that we have observed it is necessarily
$$
\omega (\varphi _4) =\omega (a_2b_2) > \omega (\varphi _0);
$$
a contradiction.

Next assume the {\sf Case V2}. Note that now we must have the {\sf Case S1-4}. Also note that both (\ref{degh=3-lemma-label6A}) and (\ref{degh=3-lemma-label9}) do hold again. We will further refine our discussion via the following three possibilities:
\begin{itemize}
 \item[{\sf Case}]{\sf W1}. $\omega (a_1) > \omega (b_0)$.
 \item[{\sf Case}]{\sf W2}. $\omega (a_1) < \omega (b_0)$.
 \item[{\sf Case}]{\sf W3}. $\omega (a_1) = \omega (b_0)$.
\end{itemize}

Assume the {\sf Case W1}. Here by (\ref{degh=3-lemma-label6A}) we get (\ref{degh=3-lemma-label10}), and then taking into account (\ref{degh=3-lemma-label9}) it follows at once that the inequality
(\ref{degh=3-lemma-label10A}) holds as well. Hence by (E\! :\! 3) we obtain that necessarily $\omega (a_3b_0)=\omega (a_1b_2)$, which is by (\ref{degh=3-lemma-label8}) further equivalent to the equality $\omega (b_0)=\omega (a_1)$; a contradiction.

Assume the {\sf Case W2}. Using (\ref{degh=3-lemma-label6A}) and (\ref{degh=3-lemma-label9}), by (E\! :\! 4) we have that
\begin{equation}
\label{degh=3-lemma-label25}
\omega (b_0)=\omega (a_3b_1)= 3\omega (b_2).
\end{equation}
Also observe that the inequality $\omega (a_1b_2)< \omega (a_3b_0)$ holds. Hence by (E\! :\! 3) we deduce that necessarily
\begin{equation}
\label{degh=3-lemma-label25A}
\omega (a_3b_0)=\omega (a_2b_1).
\end{equation}
Thus by (\ref{degh=3-lemma-label8}), (\ref{degh=3-lemma-label9}) and (\ref{degh=3-lemma-label25}) we obtain from (E\! :\! 3) that
\begin{equation}
\label{degh=3-lemma-label26}
\omega (a_2)= \omega (b_2b_0) -\omega (b_1)= 2\omega (b_2);
\end{equation}
cf. (\ref{degh=3-lemma-label12}). Next consider (E\! :\! 2). We clearly have that $\omega (a_0)< 4\omega (b_2)$, which is by (\ref{degh=3-lemma-label25}) and (\ref{degh=3-lemma-label26}) equivalent to the inequality (\ref{degh=3-lemma-label22}). Also note that by (\ref{degh=3-lemma-label9}) and (\ref{degh=3-lemma-label26}) we have that
$$
\omega (a_1b_1) < \omega (a_2b_0) = 5\omega (b_2).
$$
As a conclusion it follows that
\begin{equation}
\label{degh=3-lemma-label26B}
\omega (\varphi _2) =\omega (a_2b_0) > 5\omega (\varphi _0)/7;
\end{equation}
a contradiction.

Now assume the {\sf Case W3}. Here it is easy to check that both $\omega (a_2b_0)$ and $\omega (a_0b_2)$ are less than $\omega (a_1b_1)$. Therefore, as we are in the {\sf Case S2}, by (E\! :\! 2) and (\ref{degh=3-lemma-label9}) we obtain that
\begin{equation}
\label{degh=3-lemma-label26C}
\omega (\varphi _2) =\omega (a_1b_1)= \omega (b_0b_1)=\omega (b_0) +2\omega (b_1) > 6\omega (\varphi _0)/7;
\end{equation}
a contradiction.

It remains to treat the {\sf Case V3}. Again we consider the stated three possibilities: {\sf Case W1} to {\sf Case W3}. As we will see below here a new type of argument is needed. First assume the {\sf Case W1}. Then
\begin{equation}
\label{degh=3-lemma-label26A}
\omega (a_1b_0) > 2\omega (b_0) > 8\omega (\varphi _0)/7,
\end{equation}
and so by (E\! :\! 1) both of the equalities (\ref{degh=3-lemma-label3B}) and (\ref{degh=3-lemma-label20}) do hold. Observe that again we have (\ref{degh=3-lemma-label16}), and by
(\ref{degh=3-lemma-label20}) that
\begin{equation}
\label{degh=3-lemma-label28}
\omega (a_1b_1) > \omega (b_0) +5\omega (\varphi _0)/7 > 9\omega (\varphi _0)/7.
\end{equation}
By (E\! :\! 2) necessarily (\ref{degh=3-lemma-label17}) holds. And as a consequence,
\begin{equation}
\label{degh=3-lemma-label29}
\omega (b_2)= \omega (a_1b_1)- \omega (a_0) > 6\omega (\varphi _0)/7.
\end{equation}
Next consider (E\! :\! 5) and then note that by the last inequality we must have that
\begin{equation}
\label{degh=3-lemma-label29}
\omega (a_2)= \omega (b_1)\geq \omega (a_3b_2)= 2\omega (b_2) > 12\omega (\varphi _0)/7.
\end{equation}
Exactly as we did for (\ref{degh=3-lemma-label28}), now we have that $\omega (a_1b_1) > 16\omega (\varphi _0)/7$. And so $\omega (b_2) > 13\omega (\varphi _0)/7$, while $\omega (a_2) > 26\omega (\varphi _0)/7$. Proceeding inductively in particular we get that $\omega (b_2)$ is greater than any natural number, which is impossible.

Assume the {\sf Case W2}. Here first observe that analogously as for (\ref{degh=3-lemma-label29}) we have that $\omega (a_2)= \omega (b_1) > 2\omega (\varphi _0)/7$. Hence it follows that $\omega (a_2b_0) > 6\omega (\varphi _0)/7$. As clearly $\omega (a_2b_0) > \omega (a_1b_1)$, by (E\! :\! 2) we obtain that
\begin{equation}
\label{degh=3-lemma-label30}
\omega (a_2b_0)= \omega (a_0b_2).
\end{equation}
And hence
$$
\omega (b_2)= \omega (a_2b_0)- \omega (a_0) > 3\omega (\varphi _0)/7.
$$
But again as for (\ref{degh=3-lemma-label29}) here we get that $\omega (a_2)=\omega (b_1) > 6\omega (\varphi _0)/7$. Using the last inequality, it follows that $\omega (a_2b_0)> 10\omega (\varphi _0)/7$, and then by (\ref{degh=3-lemma-label30}) it is immediate that $\omega (b_2)> \omega (\varphi _0)$. Proceeding in the same way as for the {\sf Case W1} we conclude that the {\sf Case W2} is impossible as well.

The last one possibility is the {\sf Case W3}. The same argument as for the {\sf Case W1} gives that (\ref{degh=3-lemma-label20}) holds again. Thus, as we are in the {\sf Case V3}, it follows that $\omega (a_2b_1)=2\omega (b_1) > 10\omega (\varphi _0)/7$. At the same time it is obvious that $\omega (a_3b_0)= \omega (a_1b_2)$. Therefore by (E\! :\! 3) one concludes that necessarily $\omega (a_2b_1)\leq \omega (a_3b_0)$. And so it follows that
\begin{equation}
\label{degh=3-lemma-label31}
\omega (a_3)=\omega (b_2) \geq \omega (a_2b_1)- \omega (b_0) \geq \omega (a_2b_1)- \omega (\varphi _0) > 3\omega (\varphi _0)/7.
\end{equation}
Further as for (\ref{degh=3-lemma-label29}) now we have that $\omega (a_2)= \omega (b_1)> 6\omega (\varphi _0)/7$, and so $\omega (a_2b_1)> 12\omega (\varphi _0)/7$. Analogously as for (\ref{degh=3-lemma-label31}) we deduce that necessarily $\omega (b_2)> 5\omega (\varphi _0)/7$. The same conclusion as for the previous two cases gives that the {\sf Case W3} is impossible as well.

\vspace{0.5ex}

$\bullet$ Our last step to finish this proof is to show that the {\sf Case P3} is also impossible. Again suppose to the contrary and first assume the {\sf Case S1}. Observe that now also $3\omega (\varphi _0)/7< \varphi (b_0)< 4\omega (\varphi _0)/7$, and (\ref{degh=3-lemma-label1}) holds. One more time we will treat the subcases {\sf Case S1-3} and {\sf Case S1-4}. We begin with the first one of them. Note that we are in the {\sf Case S1-1} as well and therefore the equality (\ref{degh=3-lemma-label18}) holds, with the same argument as there. Then we have that $\omega (a_1b_0)> 6\omega (\varphi _0)/7$, and so (E\! :\! 1) implies that (\ref{degh=3-lemma-label3B}) holds. Thus we conclude that $\omega (b_1)> 2\omega (\varphi _0)/7$, which is a contradiction; cf. (\ref{degh=3-lemma-label20}).

Next assume the {\sf Case S1-4}. Observe that we again have both (\ref{degh=3-lemma-label3}) and (\ref{degh=3-lemma-label21}), and therefore (\ref{degh=3-lemma-label18}) as well. Hence it is immediate that (\ref{degh=3-lemma-label3B}) holds. By (\ref{degh=3-lemma-label3B}) and (\ref{degh=3-lemma-label18}) it follows that $2\omega (b_0)= \omega (a_0) +\omega (b_1)$. At the same time $\omega (a_0)< 2\omega (b_1)$. So we deduce that
\begin{equation}
\label{degh=3-lemma-label32}
2\omega (b_0) < 3\omega (b_1).
\end{equation}
Now suppose that $\omega (b_2) < \omega (a_3)$. Then we clearly have that $\omega (a_3b_0) > \omega (a_1b_2)$. Also note that by (\ref{degh=3-lemma-label3}) we have the inequality $\omega (a_2b_1) > 4\omega (\varphi _0)/7$. Therefore, as we have that $\omega (a_0b_3) < 4\omega (\varphi _0)/7$, by (E\! :\! 3) it follows that necessarily (\ref{degh=3-lemma-label25A}) holds. Further note that $\omega (a_3b_1) > \omega (a_2b_2)$. Besides by (\ref{degh=3-lemma-label3}), (\ref{degh=3-lemma-label25A}) and (\ref{degh=3-lemma-label32}) we have that
$$
2\omega (b_0)< \omega (a_2)+2 \omega (b_1)= \omega (a_3b_0)+ \omega (b_1);
$$
and so $\omega (b_0)<\omega (a_3b_1)$. Taking into account all that we have obtained, by (E\! :\! 4) it follows that
$$
\omega (\varphi _4)= \omega (a_3b_1) > \omega (b_0) > 3 \omega (\varphi _0)/7;
$$
a contradiction. As the second possibility, suppose that $\omega (b_2) \geq \omega (a_3)$. Then by (\ref{degh=3-lemma-label18}) we have the inequality $\omega (a_3b_0)\leq \omega (a_1b_2)$, and by
(\ref{degh=3-lemma-label3}) the inequality $\omega (a_2b_1)> 4\omega (\varphi _0)/7 > \omega (a_0b_3)$. Also, as we are in the {\sf Case S1}, it follows that
$$
\omega (a_0) + 2\omega (b_2) < 6\omega (\varphi _0)/7 < 3\omega (b_1).
$$
Further, using (\ref{degh=3-lemma-label3}), (\ref{degh=3-lemma-label3B}) and (\ref{degh=3-lemma-label18}), the last inequality is equivalent to the inequality
$$
2\omega (a_1b_2)= 2\omega (b_0b_2)= \omega (a_0b_1) + 2\omega (b_2) < 4\omega (b_1) = 2\omega (a_2b_1);
$$
i.e., we have that (\ref{degh=3-lemma-label5}) holds. Then as a conclusion we have that
\begin{equation}
\label{degh=3-lemma-label32A}
\omega (\varphi _3)= \omega (a_2b_1)= 2\omega (b_1) > 4\omega (\varphi _0)/7;
\end{equation}
a contradiction.

It remains to treat the {\sf Case S2}. Here we have (\ref{degh=3-lemma-label8}). One more time we will consider the three possibilities: {\sf Case V1} to {\sf Case V3}.

First assume the {\sf Case V1}. As we have that $\omega (a_3b_2) > 2\omega (\varphi _0)/7$, by (E\! :\! 5) it is clear that now (\ref{degh=3-lemma-label12}) holds. At this point we further refine our discussion by treating the three possibilities: {\sf Case W1} to {\sf Case W3}. Assume the first one. Then it is immediate that we must have (\ref{degh=3-lemma-label14}). By (\ref{degh=3-lemma-label12}) it follows the inequality $-2\omega (b_2)= -\omega (a_2) < -\omega (b_1)$, and hence by (\ref{degh=3-lemma-label14}) we have that
$$
\omega (b_2)< 3\omega (b_2)- \omega (b_1)= \omega (a_1)- \omega (b_1).
$$
As $\omega (a_0)- \omega (b_0)< \omega (\varphi _0)/7$ it follows at once that $\omega (a_1b_0) > \omega (a_0b_1)$. Using that $\omega (a_1b_0)> 2\omega (b_0)> 6\omega (\varphi _0)/7$, we conclude that
$$
\omega (\varphi _1)= \omega (a_1b_0) > 6\omega (\varphi _0)/7,
$$
which is a contradiction; cf. (\ref{degh=3-lemma-label11A}). Next assume the {\sf Case W2}. Here we have that
\begin{equation}
\label{degh=3-lemma-label33}
\omega (a_2b_0)> \omega (a_1b_1).
\end{equation}
Using the obvious inequality $\omega (a_0)- \omega (b_0)< \omega (b_2)$, by (\ref{degh=3-lemma-label12}) it is immediate that (\ref{degh=3-lemma-label22}) holds as well. As a conclusion,
\begin{equation}
\label{degh=3-lemma-label33A}
\omega (\varphi _2)= \omega (a_2b_0) = 2\omega (b_2) +\omega (b_0) > 5\omega (\varphi _0)/7,
\end{equation}
which is again a contradiction; cf. the argument for (\ref{degh=3-lemma-label26B}). Assume the third possibility, the {\sf Case W3}. As we have the inequality $\omega (a_3b_2)= 2\omega (b_2) > 2\omega (\varphi _0)/7$, by (E\! :\! 5) it is clear that
\begin{equation}
\label{degh=3-lemma-label34}
\omega (a_2)=\omega (a_3b_2)= 2\omega (b_2).
\end{equation}
Next it is obvious that again (\ref{degh=3-lemma-label33}) holds. Also, by (\ref{degh=3-lemma-label34}) the inequality (\ref{degh=3-lemma-label22}) holds. Now the same conclusion as for the {\sf Case W2} works here as well.

As the second step here, assume the {\sf Case V2}. Observe that then we have both (\ref{degh=3-lemma-label6A}) and (\ref{degh=3-lemma-label9}). We proceed analogously as for the {\sf Case V1}, and so let us begin with the {\sf Case W1}. Now it is clear that (\ref{degh=3-lemma-label10}) holds. By considering (E\! :\! 3) first note that
$$
\omega (a_3b_0)=\omega (b_2)+ \omega (b_0)> 4\omega (\varphi _0)/7 > \omega (a_0b_3).
$$
And we clearly have the inequality (\ref{degh=3-lemma-label10X}). Using the equalities (\ref{degh=3-lemma-label9}) and (\ref{degh=3-lemma-label10}) we have at once that the inequality (\ref{degh=3-lemma-label10A}) holds ass well. By all we have noted it follows that
$$
\omega (\varphi _3)= \omega (a_1b_2)= 4\omega (b_2)> 4\omega (\varphi _0)/7;
$$
a contradiction. Next assume the {\sf Case W2}. We have (\ref{degh=3-lemma-label25}). And thus also both
$$
\omega (a_2b_1)< 2\omega (b_1)= 4\omega (b_2)
$$
and
$$
\omega (a_1b_2)< \omega (b_0b_2)= 4\omega (b_2)= \omega (a_3b_0).
$$
Further observe that $\omega (a_0b_3)< 4\omega (\varphi _0)/7 < \omega (a_3b_0)$, and therefore by (E\! :\! 3) we conclude that
$$
\omega (\varphi _3) =\omega (a_3b_0) > 4\omega (\varphi _0)/7,
$$
which is a contradiction; cf. (\ref{degh=3-lemma-label2A}). Assume the third possibility, the {\sf Case W3}. Then we have (\ref{degh=3-lemma-label6}), and the inequality $\omega (a_0b_2)< \omega (a_1b_1)$ as well. Therefore, as we have that $\omega (a_1b_1)= \omega (b_0b_1)> 5\omega (\varphi _0)/7$, by (E\! :\! 2) we deduce that $\omega (\varphi _2)= \omega (a_1b_1)$, which is impossible; cf. (\ref{degh=3-lemma-label26C}).

As the last step assume the {\sf Case V3}. First note that here again (\ref{degh=3-lemma-label15}) holds.
As several times before we treat the three subcases: {\sf Case W1} to {\sf Case W3}. Assume the first one. We clearly have the inequality (\ref{degh=3-lemma-label16}). Moreover, using (\ref{degh=3-lemma-label15}), we have that
$$
\omega (a_0)- \omega (b_0) <\omega (\varphi _0)/7 < \omega (b_2) \leq \omega (b_1) -\omega (b_2),
$$
and therefore
$$
\omega (a_0b_2) < \omega (b_0b_1) < \omega (a_1b_1).
$$
Now by (E\! :\! 2), the last inequality and (\ref{degh=3-lemma-label16}) we conclude that
$$
\omega (\varphi _2) = \omega (a_1b_1) > \omega (b_0) +\omega (b_1) > 5\omega (\varphi _0)/7;
$$
a contradiction. Next assume the {\sf Case W2}. It is obvious that now (\ref{degh=3-lemma-label33}) holds. Also by (\ref{degh=3-lemma-label15}) we have that
$$
\bigl( \omega (a_2) -\omega (b_2) \bigr) +\omega (b_0) \geq \omega (b_2) +\omega (b_0) > 4\omega (\varphi _0)/7 > \omega (a_0) ;
$$
and so the inequality (\ref{degh=3-lemma-label22}) holds. Hence we conclude that necessarily
$\omega (\varphi _2)=\omega (b_0)$, which is impossible; cf. (\ref{degh=3-lemma-label33A}). It remains to treat the {\sf Case W3}. Here it is easy to see that (\ref{degh=3-lemma-label3B}) holds again. We also have that
\begin{equation}
\label{degh=3-lemma-label35}
\omega (a_3b_0) =\omega (a_1b_2) = \omega (b_0b_2) > 4\omega (\varphi _0)/7 > \omega (a_0).
\end{equation}
Furthermore,
\begin{equation}
\label{degh=3-lemma-label36}
\omega (a_2b_1) > \omega (a_3b_0)
\end{equation}
holds as well. Namely by (\ref{degh=3-lemma-label3B}), (\ref{degh=3-lemma-label8}) and (\ref{degh=3-lemma-label15}) the above inequality is equivalent to
$$
2\omega (b_1) > \omega (b_2b_0)= \omega (b_2) +\bigl( \omega (a_0b_1) -\omega (a_1)\bigr) ,
$$
which is further equivalent to the inequality
$$
\omega (b_1) -\omega (b_2) > \omega (a_0) - \omega (a_1)= \omega (a_0) -\omega (b_0).
$$
Here one just have to observe that $\omega (b_1) -\omega (b_2) >\omega (\varphi _0)/7$; and so (\ref{degh=3-lemma-label36}) follows. Finally by (\ref{degh=3-lemma-label35}) and (\ref{degh=3-lemma-label36}) we get (\ref{degh=3-lemma-label32A}), which cannot be. Thus we have completed our proof of the lemma.
\end{proof}

Our Theorem \ref{THMDumas1kind} can in particular be very helpful while studying the reducibility problem for a large class of multivariate polynomials. As one related result we have the following obvious corollary. Note that if our Conjecture is true, then this gives a much more stronger result than for example the \cite[Cor. 4.10]{Ga} does.

\begin{cor}
\label{Dumas1-cor}
Let $A$ be an integral domain and $R=A[X_1,\ldots ,X_k]$ be the ring of $A$-polynomials in $k\geq 1$ variables. Let $\sigma$ be either the total degree $\deg$ on $R$ or $\deg _i$, the degree with respect to some variable $X_i$. Suppose we have $n>1$ and any polynomials $\varphi _0,\varphi _1,\ldots ,\varphi _{n-1}\in R$ so that $\gcd (n,\sigma (\varphi _0))=1$ and also
$$
\sigma (\varphi _j) \leq \Bigl\lfloor \frac{(n-j)}{n}\sigma (\varphi _0)\Bigr\rfloor \qquad \text{for $j=1,\ldots ,n-1$}.
$$
Then for $n\leq 7$ and any $a\in A^{\times}$ the polynomial
$$
f=aX^n+\varphi _{n-1}X^{n-1} +\cdots + \varphi _1X + \varphi  _0
$$
cannot be written as a product $f=gh$ for some polynomials $g,h\in R[X]=A[X,X_1,\ldots ,X_k]$ of degrees at least one.
\end{cor}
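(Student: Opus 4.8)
The plan is to reduce the statement directly to Theorem \ref{THMDumas1kind} by taking the Dumas valuation $\omega = \sigma$ and verifying that the hypotheses of the corollary are exactly the conditions {\sf (A)} and {\sf (B1)} of the Conjecture. First I would record that $\sigma$, whether it is the total degree $\deg$ or a partial degree $\deg_i$, is a nontrivial Dumas valuation of the first kind on the integral domain $R=A[X_1,\ldots,X_k]$. For $k\geq 2$ this is precisely Corollary \ref{corV1} applied to $\boldsymbol{\nu}=(1,\ldots,1)$ and to $\boldsymbol{\nu}=(0,\ldots,1,\ldots,0)$, in the notation of Definition \ref{nu}; for $k=1$ one has only the ordinary degree on $A[X_1]$, which satisfies (D0), (D1), (D2-1) and (D3-1) by the elementary properties of the degree over an integral domain, and so is again a Dumas valuation of the first kind.

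Next, setting $\varphi_n:=a$, I would check condition {\sf (A)}. Since $a\in A^{\times}$ is a nonzero constant we have $\sigma(\varphi_n)=\sigma(a)=0$. Moreover $\varphi_0\neq 0$, for otherwise $\sigma(\varphi_0)$ would not be a natural number and the condition $\gcd(n,\sigma(\varphi_0))=1$ would be meaningless; and since $n>1$ this same condition forces $\sigma(\varphi_0)\geq 1$, because $\sigma(\varphi_0)=0$ would give $\gcd(n,0)=n>1$. Thus $\sigma(\varphi_0)\in\mathbb{N}$ with $\gcd(n,\sigma(\varphi_0))=1$, so {\sf (A)} holds.

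For {\sf (B1)} I would simply unwind the given bound: for each $1\leq j\leq n-1$ the hypothesis yields
$$
\sigma(\varphi_j)\leq \Bigl\lfloor \frac{n-j}{n}\sigma(\varphi_0)\Bigr\rfloor \leq \frac{n-j}{n}\sigma(\varphi_0),
$$
and multiplying through by $n$ gives $n\sigma(\varphi_j)\leq (n-j)\sigma(\varphi_0)$, which is exactly the inequality in {\sf (B1)}. Should some $\varphi_j$ vanish, then $\sigma(\varphi_j)=-\infty$ and the inequality holds trivially under the usual conventions for $-\infty$. With both {\sf (A)} and {\sf (B1)} verified and $n\leq 7$, Theorem \ref{THMDumas1kind} applies to $f=\sum_{j=0}^n\varphi_jX^j$ and shows that $f$ admits no factorization into two polynomials of $R[X]=A[X,X_1,\ldots,X_k]$ of positive degree in $X$, which is the assertion.

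I expect no real obstacle here: all the substance is contained in Theorem \ref{THMDumas1kind}, and the corollary is just the observation that its abstract hypotheses are met by the degree valuations. The only points deserving a word of care are the positivity $\sigma(\varphi_0)\geq 1$ extracted from the coprimality condition, and the legitimacy of $\sigma$ as a Dumas valuation in the univariate case $k=1$, which falls slightly outside the literal scope of Definition \ref{nu} but is immediate from the standard degree axioms.
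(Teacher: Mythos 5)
Your proof is correct and is exactly the argument the paper intends: the paper states this corollary as an immediate (``obvious'') specialization of Theorem \ref{THMDumas1kind}, taking $\omega=\sigma$, which is a Dumas valuation of the first kind by Corollary \ref{corV1}, and supplies no further proof. Your added care --- extracting $\sigma(\varphi_0)\geq 1$ from the coprimality hypothesis, handling $\varphi_j=0$ via the $-\infty$ conventions, and covering the univariate case $k=1$ outside the literal scope of Definition \ref{nu} --- simply makes explicit the routine verifications the paper leaves implicit.
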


With our method sometimes we can get even more than the previous corollary gives. Let us present two simple, but illustrative, examples.

\begin{ex}
\label{Dumas1-ex}
Let $A$ be any integral domain and define the ring of polynomials $R=A[Y,Z]$.

(1) Consider polynomials $\varphi _0,\varphi _1,\psi _0,\psi _1 \in R$ defined by $\varphi _0=Y^4Z^3 +\psi _0$ and $\varphi _1=Y^3Z^3 +\psi _1$, where the total degrees of $\psi _0$ and $\psi _1$ satisfy $\deg \psi _0\leq 4$ and $\deg \psi _1\leq 3$. And then consider the polynomial
$$
f=X^6 +\varphi _1(Y,Z)X +\varphi _0(Y,Z)\in A[X,Y,Z].
$$
We claim that $f$ cannot be written as a product of two nonconstant polynomials. First observe that $\gcd (6,\deg _Y\varphi _0)=2$, $\gcd (6,\deg _Z\varphi _0)=3$ and also that $\deg \varphi _1=6>35/6=5\deg \varphi _0/6$. Thus we clearly cannot apply the previous corollary. But if we choose $\boldsymbol{\nu}=(2,1)$ and consider the corresponding Dumas valuation $\omega =\deg _{\boldsymbol{\nu}} :A[Y,Z]\to \mathbb N_0^{-\infty}$, we have the following:
$$
\omega (\varphi _1)=9 < 5\cdot 11/6 = (6-1)\omega (\varphi _0)/6.
$$
Thus for $f$ we have both of the conditions {\sf (A)} and {\sf (B1)} of our Conjecture fulfilled. And so by Theorem \ref{THMDumas1kind} our claim follows.

(2) Suppose we have polynomials $\varphi _0, \varphi _1, \varphi _2, \varphi _3\in R$ and elements $a_0,a_1,a_2,a_3\in A^{\times}$ satisfying the following. We have $\varphi _0=a_0Y^6Z^5+\psi _0$, where $\psi _0\in R$ is any polynomial such that $\deg \psi _0\leq 6$. We have $\varphi _1=a_1Y^3Z+\psi _1$, where $\psi _1\in R$ satisfies $\deg \psi _1\leq 3$. We have $\varphi _2=a_2Y^2Z+\psi _2$, where $\psi _2\in R$ satisfies $\deg \psi _2\leq 2$. And we have $\varphi _3=a_3YZ^4+\psi _3$, where $\psi _3\in A[Y]$ is such that $\deg \psi _3\leq 10$. Then the polynomial $f\in A[X,Y,Z]$, given by
$$
f=Z^4+\varphi _3Z^3+ \varphi _2Z^2+ \varphi _1Z + \varphi _0,
$$
cannot be written as a product of two nonconstant polynomials. To see this one just have to choose $\boldsymbol{\nu}=(7,1)$ and then define the corresponding Dumas valuation $\omega =\deg _{\boldsymbol{\nu}}$. It remains to apply our theorem. Observe that again we cannot directly apply the above corollary.
\end{ex}

In fact it is worth to formulate the following generalization of the above given corollary.

\begin{cor}
\label{Dumas1-cor2}
Let $A$ be an integral domain and $R=A[X_1,\ldots ,X_k]$, for $k\geq 2$. Suppose we have $n>1$, some $\boldsymbol{\nu}\in \mathbb N_0^k$ and any polynomials $\varphi _0,\varphi _1,\ldots ,\varphi _n\in R$ so that $\deg _{\boldsymbol{\nu}} \varphi _n=0$, $\mu =\deg _{\boldsymbol{\nu}}\varphi _0$ is a positive number satisfying $\gcd (n,\mu )=1$ and also
$$
\deg _{\boldsymbol{\nu}} \varphi _j \leq \Bigl\lfloor \frac{(n-j)}{n}\mu \Bigr\rfloor \qquad \text{for $j=1,\ldots ,n-1$}.
$$
Then for $n\leq 7$ the polynomial $f=\sum _{j=0}^n\varphi _jX^j$ is irreducible in the ring of polynomials $A[X,X_1,\ldots ,X_k]$.
\end{cor}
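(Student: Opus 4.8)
The plan is to obtain this as an immediate application of Theorem \ref{THMDumas1kind}, the only work being the choice of the right Dumas valuation and the verification of its hypotheses. First I would set $\omega = \deg_{\boldsymbol{\nu}}$ on the ring $R = A[X_1,\ldots,X_k]$. By Corollary \ref{corV1} the map $\deg_{\boldsymbol{\nu}} : R \to \mathbb{N}_0^{-\infty}$ is a Dumas valuation of the first kind, so the standing hypothesis on $\omega$ in the theorem is automatic. It then suffices to check that the given data $\varphi_0,\ldots,\varphi_n$ satisfy conditions {\sf (A)} and {\sf (B1)} of the Conjecture for this $\omega$, after which Theorem \ref{THMDumas1kind} applies verbatim for $n \leq 7$.

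For condition {\sf (A)} I would simply translate the hypotheses: the assumption $\deg_{\boldsymbol{\nu}}\varphi_n = 0$ gives $\omega(\varphi_n)=0$; the assumption that $\mu = \deg_{\boldsymbol{\nu}}\varphi_0$ is a positive number gives $\omega(\varphi_0) = \mu \in \mathbb{N}$ (and in particular $\varphi_0 \neq 0$, since $\deg_{\boldsymbol{\nu}} 0 = -\infty$); and $\gcd(n,\mu)=1$ is assumed outright. For condition {\sf (B1)} the key observation is that a floor never exceeds its argument: the hypothesis $\deg_{\boldsymbol{\nu}}\varphi_j \leq \bigl\lfloor \frac{n-j}{n}\mu \bigr\rfloor$ yields $\deg_{\boldsymbol{\nu}}\varphi_j \leq \frac{n-j}{n}\mu$ for $j=1,\ldots,n-1$, and multiplying through by $n$ gives exactly $n\,\omega(\varphi_j) \leq (n-j)\,\omega(\varphi_0)$, which is {\sf (B1)}.

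Having verified {\sf (A)} and {\sf (B1)}, Theorem \ref{THMDumas1kind} shows that $f = \sum_{j=0}^n \varphi_j X^j$ cannot be written as a product of two nonconstant polynomials in $R[X] = A[X,X_1,\ldots,X_k]$, i.e.\ $f$ is irreducible, which is the assertion. I do not expect any genuine obstacle: the entire content is the reduction to the theorem, and the only computation involved is the single elementary step passing from the floor inequality to {\sf (B1)}. (One may note in passing, via the remark preceding Lemma \ref{degh=2-lemma}, that since $\gcd(n,\mu)=1$ the inequalities in {\sf (B1)} are in fact strict, though this is not needed for the deduction.)
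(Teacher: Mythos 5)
Your proposal is correct and is exactly the paper's intended argument: the paper states Corollary \ref{Dumas1-cor2} without proof as an evident generalization of Corollary \ref{Dumas1-cor}, the content being precisely your reduction---take $\omega=\deg_{\boldsymbol{\nu}}$, invoke Corollary \ref{corV1} to see it is a Dumas valuation of the first kind, check {\sf (A)} and pass from the floor inequality to {\sf (B1)}, and apply Theorem \ref{THMDumas1kind}. Your parenthetical remark on the strictness of the inequalities also matches the paper's remark preceding Lemma \ref{degh=2-lemma}.
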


We conclude this section with few more remarks.

\begin{rmk}
\label{Dumas1-rmk}
Let $A$ be any integral domain and define the ring of polynomials $R=A[X,Z]$. Take $m,n,i\in\mathbb N$ where $i<m$, and any polynomials $\varphi _0,\varphi _i\in R$ satisfying $\deg _X\varphi _0=n$ and $\deg _X\varphi _i \leq (m-i)n/m$. If we consider the Dumas valuation $\omega =\deg _X$ on $R$, then for $m\leq 7$ our Theorem \ref{THMDumas1kind} implies at once that the polynomial
\begin{equation}
\label{Dumas1-rmk-lab1}
f=Y^m +\varphi _i(X,Z)Y^i +\varphi _0(X,Z)
\end{equation}
cannot be written as a product of two nonconstant polynomials provided that $\gcd (m,n)=1$.

Note that Gao on p.~517 of \cite{Ga} consider a special case of our polynomial $f$ by taking $\varphi _i(X,Z)=Z^j$ and $\varphi _0(X,Z)=X^n +X^uZ^v +Z^w+1$, where $u<n$ while $j$, $v$ and $w$ are arbitrary. (Of course with no restrictions on $n$, as we have.) And then concludes via polytope method that this special $f$ is (absolutely) irreducible if $A$ is a field. Also note that here for the chosen polynomials we have $\deg _X\varphi _i=0$ and $\deg _X\varphi _0=n$; and therefore the condition {\sf (B1)} is trivially fulfilled.
\end{rmk}

Perhaps this might be a good place for one more observation. Assume the setting as in the above Remark. Then take a positive divisor $d$ of $n$ so that $\gcd (m,n/d)=1$. And then consider the $\Gamma$-Dumas valuation $\omega _1= \frac{1}{d}\deg _X :R\to \Gamma$, for $\Gamma =\frac{1}{d}\mathbb N_0$; see Lemma \ref{ba0}. It would be interesting to know whether our Conjecture is true if a Dumas valuation there is replaced by a $\Gamma$-Dumas valuation, for $\Gamma$ as above. Namely, in case of the positive answer one could at least sometimes be able to drop the ``$\gcd$-assumption'' in the condition {\sf (A)}. Polynomials of the type as in (\ref{Dumas1-rmk-lab1}) might be appropriate for testing this.

\section{The case of Dumas valuations of the second kind}
\label{Dumas2}
In this section we complete a proof od Theorem \ref{mainTHM}, by treating the case when $\omega$ is a Dumas valuation of the second kind.  Our goal is to prove the following.

\begin{thm}
\label{THMDumas2kind}
Let $R$ be an integral domain, $n>1$ a natural number and elements $\varphi _0,\varphi _1,\ldots ,\varphi _n\in R$. Suppose $\omega$ is a Dumas valuation of the second kind, defined on $R$ and satisfying the conditions {\sf (A)} and {\sf (B2)} of the Conjecture. Then for every $n\leq 6$ the polynomial $f=\sum _{j=0}^n\varphi _jX^j$ cannot be written as a product of two nonconstant polynomials from $R[X]$.
\end{thm}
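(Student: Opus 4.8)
The plan is to argue by contradiction and reduce everything to a case analysis on $t=\deg h$, exactly paralleling the structure already used for Theorem \ref{THMDumas1kind}. Suppose $f=gh$ with $g,h$ nonconstant, written as in (\ref{gh-poly}) with $1\le t=\deg h\le s=\deg g$ and $s+t=n$. Since $n\le 6$, the constraint $t\le s$ forces $t\le n/2\le 3$, so only the three cases $t\in\{1,2,3\}$ can occur. By Lemma \ref{obvious lemma} we have $\omega(a_s)=0=\omega(b_t)$, and comparing constant terms gives $\omega(a_0)+\omega(b_0)=\omega(\varphi_0)=:\ell$, where by {\sf (A)} we have $\ell\in\mathbb N$ and $\gcd(n,\ell)=1$. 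This last fact is the arithmetic engine of the whole argument: because $\gcd(n,\ell)=1$ and $n>1$, the number $\ell/n$ is never an integer, so any value $\omega(b_0)\in\mathbb N_0$ must lie strictly above or strictly below $\ell/n$, and more generally the bounds furnished by {\sf (B2)} hold strictly whenever they involve a genuinely fractional multiple of $\ell$ (cf.\ the Remark preceding Lemma \ref{degh=2-lemma}).

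With this reduction in hand, the case $t=1$ is already settled: Lemma \ref{degh=1-lemma} was proved for a Dumas valuation of \emph{either} kind, so it applies verbatim and disposes of $t=1$ for every $n>1$. It therefore remains to rule out $t=2$ (which can occur only when $n\ge 4$) and $t=3$ (which can occur only when $n\ge 6$). These two are precisely the content of the two new auxiliary results, Lemma \ref{2Ddegh=2-lemma} and Lemma \ref{2Ddegh=3-lemma}: invoking the former for $n=4,5,6$ and the latter for $n=6$ completes the deduction. As in the first-kind case, I would state Lemma \ref{2Ddegh=2-lemma} in the sharper form valid for all $n\ge 4$, since the proof costs no more and clarifies the pattern. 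Note also that the ceiling $n\le 6$ here, as opposed to $n\le 7$ for the first kind, reflects exactly the fact that Lemma \ref{2Ddegh=3-lemma} is established only up to $n=6$.

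The internal mechanism of each lemma is the ``dual'' of the first-kind computation: one keeps precise track of $\omega$ on the coefficients $a_i,b_j$ by running through the coefficient equations $\varphi_k=\sum_i a_ib_{k-i}$, now using the second-kind rules $\omega(xy)=\omega(x)+\omega(y)$ and $\omega(x+y)\ge\min\{\omega(x),\omega(y)\}$, with equality when the summands have distinct values, as collected in Lemma \ref{ba1}(iv). Concretely, I would open a trichotomy on $\omega(b_0)$ relative to $\frac{t}{n}\ell$; for $t=1$ this is exactly the split into Claim 1 and Claim 2 seen in Lemma \ref{degh=1-lemma}. In each branch one propagates the resulting inequalities inductively, either downward from the top equation or upward from the equation $\varphi_1=a_1b_0+a_0b_1$, until one reaches a relation of the form $\omega(\varphi_j)<\frac{n-j}{n}\ell$ or $\omega(\varphi_0)\ne\ell$, contradicting {\sf (B2)} or {\sf (A)}. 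The non-integrality of $\ell/n$ automatically kills the ``boundary'' branch where $\omega(b_0)$ would have to equal $\frac{t}{n}\ell$, so only strict inequalities need to be chased.

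The main obstacle, exactly as for the first kind, is the bookkeeping in the case $t=3$, $n=6$. There the analysis forks repeatedly according to the comparison of $\omega(a_i)$ with $\omega(b_j)$, producing a tree of subcases analogous to the {\sf Case P1}--{\sf Case W3} hierarchy in the proof of Lemma \ref{degh=3-lemma}; each leaf requires its own inductive valuation-propagation before a contradiction surfaces. Conceptually this is nothing more than the Newton-polygon incompatibility between a single segment of slope $-\ell/n$ with $\gcd(n,\ell)=1$ and a proper factor $h$ of degree $t<n$, but because $\omega$ is merely a Dumas valuation of the second kind---not a genuine valuation on a field, and $R$ need not be a UFD---this incompatibility cannot be imported from the classical Eisenstein--Dumas theory and must be re-established by hand through these elementary but lengthy estimates.
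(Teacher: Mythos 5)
Your proposal is correct and follows essentially the same route as the paper: reduce to $t=\deg h\in\{1,2,3\}$ since $t\leq n/2\leq 3$, dispose of $t=1$ via Lemma \ref{degh=1-lemma} (proved there for Dumas valuations of either kind), and of $t=2$ for $n\geq 4$ and $t=3$ for $n=6$ via Lemmas \ref{2Ddegh=2-lemma} and \ref{2Ddegh=3-lemma}, whose internal mechanism---case splits on the valuations of $a_0,b_0$ against fractional multiples of $\omega(\varphi _0)$, inductive propagation through the coefficient equations using Lemma \ref{ba1}(iv), and strict inequalities forced by $\gcd (n,\omega (\varphi _0))=1$---you describe accurately. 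The only cosmetic deviation is that the paper's trichotomy in the $t=2$ case is phrased on $\omega (a_0)$ rather than $\omega (b_0)$, which is equivalent since $\omega (a_0)+\omega (b_0)=\omega (\varphi _0)$.
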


Our main task here is to prove the following analogue of Lemma \ref{degh=2-lemma}. As it will be seen, our arguments for the lemma below are similar to the ones of the mentioned lemma. And so many details will be omitted. One more time Lemma \ref{ba1}, and in particular its claim (iv), will play a crucial role.

\begin{lem}
\label{2Ddegh=2-lemma}
Let $\omega$ be a Dumas valuation of the second kind, defined on an integral domain $R$. Let $f\in R[X]$ be a polynomial as in the Conjecture, of degree $n\geq 4$. Then there are no polynomials $g,h\in R[X]$ such that $f=gh$ and $\deg h=2$.
\end{lem}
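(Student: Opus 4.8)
The plan is to carry out the mirror image of the proof of Lemma~\ref{degh=2-lemma}, systematically reversing every $\omega$-inequality: the second-kind clauses of Lemma~\ref{ba1}(iv) together with the equality case of (D3-2) now play the role that the first-kind clauses and the $\max$-rule played there, so that $\min$ replaces $\max$ throughout and condition {\sf (B2)} replaces {\sf (B1)}. First I would suppose to the contrary that $f=gh$ with $\deg h=2$, writing $g=a_{n-2}X^{n-2}+\cdots+a_1X+a_0$ and $h=b_2X^2+b_1X+b_0$. Lemma~\ref{obvious lemma} gives $\omega(a_{n-2})=0=\omega(b_2)$, and $\varphi_0=a_0b_0$ gives $\omega(a_0)+\omega(b_0)=\omega(\varphi_0)$ with both summands nonnegative. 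The coefficient equations produced by $f=gh$ are literally the same system written out in the proof of Lemma~\ref{degh=2-lemma}. As there, the gcd-assumption in {\sf (A)} forces every threshold $\tfrac{\lambda}{n}\omega(\varphi_0)$ with $0<\lambda<n$ to be a non-integer, so each comparison of such a threshold with an honest valuation is automatically strict; this keeps the case distinctions disjoint and exhaustive.

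Next I would split on the location of $\omega(a_0)$ relative to the two thresholds $\tfrac{2}{n}\omega(\varphi_0)$ and $\tfrac{n-2}{n}\omega(\varphi_0)$, obtaining the same three cases {\sf P1}, {\sf P2}, {\sf P3} as in Lemma~\ref{degh=2-lemma} (since $\omega(b_0)=\omega(\varphi_0)-\omega(a_0)$, this is symmetric in $a_0$ and $b_0$), and within each case I would subdivide on $\omega(b_1)$ against $\tfrac1n\omega(\varphi_0)$, the analogues of {\sf Case S1} and {\sf Case S2}. In every branch the mechanism is the same: reading the coefficient equations from the extreme ends inward and repeatedly invoking the equality clause of Lemma~\ref{ba1}(iv), one pins down the valuation of an intermediate coefficient whenever two summands of an equation have distinct valuations, and a short induction then yields formulas of the type $\omega(a_{n-k-2})=k\,\omega(b_1)$ or lower bounds $\omega(a_j)>\tfrac{n-j-2}{n}\omega(\varphi_0)$ along a run of indices. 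Each branch closes with a contradiction: typically some $\omega(\varphi_j)$ is forced strictly below the bound $\tfrac{n-j}{n}\omega(\varphi_0)$ demanded by {\sf (B2)}, or a forced valuation contradicts the standing case hypothesis, or the accumulated lower bounds on $\omega(a_0)$ and $\omega(b_0)$ push their sum past $\omega(\varphi_0)$, contradicting $\omega(a_0)+\omega(b_0)=\omega(\varphi_0)$.

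The hard part will be the middle case {\sf P3} together with its subcase {\sf Case S2} (where $\omega(b_1)>\tfrac1n\omega(\varphi_0)$), exactly as in Lemma~\ref{degh=2-lemma}: here neither endpoint of the coefficient string is extremal, and one must branch on the sign of $\omega(a_{n-4})-\omega(b_0)$ and then, in the borderline sub-subcase $\omega(a_{n-4})=\omega(b_0)$, split once more according to whether $2\omega(b_1)<\omega(b_0)$ or $2\omega(b_1)=\omega(b_0)$. This borderline analysis carries the genuine induction, the mirror of the two internal Claims of Lemma~\ref{degh=2-lemma}: by induction on $k\le\lceil\tfrac{n-3}{2}\rceil$ I would prove the paired statement $\omega(a_{n-2k-2})=k\,\omega(b_0)$ and $\omega(a_{n-2k-3})\ge k\,\omega(b_0)+\omega(b_1)$, and then read off the contradiction from the lowest-order equations after distinguishing the parity of $n$. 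Since only $n\ge4$ is assumed, the smallest instances $n=4$ and $n=5$ lie outside the range where the induction bites and must be checked directly, just as the first-kind argument reserved $n=4$ for separate treatment; I would dispatch these by hand. Once every branch is closed, no factorization with $\deg h=2$ can exist.
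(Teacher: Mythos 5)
Your global strategy is exactly the paper's: assume $f=gh$ with $\deg h=2$, reuse verbatim the coefficient system (E\!:\!1)--(E\!:\!\text{n\!\!--\!1}) from Lemma~\ref{degh=2-lemma}, run the same trichotomy {\sf P1}/{\sf P2}/{\sf P3} on $\omega(a_0)$ with the subcases {\sf S1}/{\sf S2} on $\omega(b_1)$, replace the max-clauses of Lemma~\ref{ba1}(iv) and {\sf (B1)} by the min-clauses and {\sf (B2)}, and rely on the gcd-condition in {\sf (A)} to make all threshold comparisons strict. All the machinery you list genuinely occurs in the paper's proof, and your paired borderline statement coincides with the paper's (\ref{2Ddegh=2-lemma-label3}) after the reindexing $k\mapsto k+1$, including the $\bigl\lceil\frac{n-3}{2}\bigr\rceil$ range and the parity-of-$n$ endgame.

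The one concrete misprediction is \emph{where} the hard work lands: reversing the inequalities swaps which regime is rigid, so the case tree is not obtained by keeping the labels of Lemma~\ref{degh=2-lemma} and flipping signs. In the second-kind setting {\sf Case P3}, subcase {\sf S2}, is the \emph{easy} branch: a single min-rule induction gives $\omega(a_{n-k})\geq\frac{k-2}{n}\omega(\varphi_0)$ for $3\leq k\leq n$, and $k=n$ contradicts {\sf P3} outright --- no split on $\omega(a_{n-4})-\omega(b_0)$ and no analogue of {\sf T1}/{\sf T2} is needed, or even available, since the borderline hypothesis $\omega(a_{n-4})=\omega(b_0)$ yields no constraint comparing $2\omega(b_1)$ with $\omega(b_0)$ in that regime (the only surviving $\omega(a_{n-4})$-versus-$\omega(b_0)$ split sits under {\sf P3}/{\sf S1}, and both of its branches close in a few lines). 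The genuine difficulty migrates to {\sf Case P1}, subcase {\sf S1}, which the paper trichotomizes by comparing $\omega(b_0)$ with $\omega(a_{n-3}b_1)=2\omega(b_1)$: the branch $\omega(b_0)<2\omega(b_1)$ carries the paired induction (\ref{2Ddegh=2-lemma-label3}) with the parity split, the branch $\omega(b_0)>2\omega(b_1)$ the monotone formula $\omega(a_j)=(n-j-2)\omega(b_1)$, and the borderline $\omega(b_0)=2\omega(b_1)$ the run-of-strict-inequalities claim for (\ref{2Ddegh=2-lemma-label4}). Your sketch disposes of {\sf P1} with a generic ``short induction,'' which covers only the middle branch; executed literally, your plan would stall there, though since you hold all the needed tools (merely filed under {\sf P3}) you would presumably relocate them once the branch refused to close. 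Two minor points: the paper reserves only $n=4$ (inside {\sf P2-2}) for separate checking, so your additional hand-treatment of $n=5$ is harmless but unnecessary; and your remark that $\omega(a_0)+\omega(b_0)=\omega(\varphi_0)$ makes the trichotomy on $\omega(a_0)$ symmetric and exhaustive is correct and is used implicitly throughout the paper's argument.
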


\begin{proof}
Again suppose to the contrary, and let the polynomials $g$ and $h$ satisfying $f=gh$ be as in the proof of Lemma \ref{degh=2-lemma}. In what follows we refer all the time to the coefficients equalities (E\! :\! 1) to (E\! :\! \text{n\!\! --\! 1}) given there. And we will treat the same three possibilities formulated as there: {\sf Case P1} to {\sf Case P3}.

$\bullet$ Let us begin by showing that the {\sf Case P1} is impossible. And so suppose to the contrary. We distinguish the same two subcases, the {\sf Case S1} and {\sf Case S2}, as before.

Suppose the {\sf Case S1}; i.e., that $\omega (b_1) <\omega (\varphi _0)/n$. Then it is immediate that (\ref{degh=2-lemma-label1}) holds here as well. Further we refine our analysis by considering the following three sub-subcases:

\begin{itemize}
 \item[{\sf Case}]{\sf S1\!\! -1}. $\omega (b_0) < \omega (a_{n-3}b_1)$.
 \item[{\sf Case}]{\sf S1\!\! -2}. $\omega (b_0) > \omega (a_{n-3}b_1)$.
 \item[{\sf Case}]{\sf S1\!\! -3}. $\omega (b_0) = \omega (a_{n-3}b_1)$.
\end{itemize}

\vspace{0.5ex}

\noindent
Suppose the first sub-subcase. Then by the condition {\sf (B2)} and (E\! :\! \text{n\!\! --\! 2}) it is clear that
\begin{equation}
\label{2Ddegh=2-lemma-label1}
\omega (a_{n-4}) = \omega (b_0);
\end{equation}
cf. the {\sf Case S2-3} in the proof of Lemma \ref{degh=2-lemma}. Next, as we have that $\omega (a_{n-3}b_1)= 2\omega (b_1)$, it is clear that
$$
\omega (a_{n-4}b_1)= \omega (a_{n-3}b_0)< 3\omega (\varphi _0)/n.
$$
Therefore by (E\! :\! \text{n\!\! --\! 3}) we obtain that
\begin{equation}
\label{2Ddegh=2-lemma-label2}
\omega (a_{n-5}) \geq \omega (a_{n-3}b_0)= \omega (b_0) +\omega (b_1).
\end{equation}
Then consider (E\! :\! \text{n\!\! --\! 4}). By (\ref{2Ddegh=2-lemma-label2}) and then (\ref{2Ddegh=2-lemma-label1}) it follows that $\omega (a_{n-5}b_1)> \omega (a_{n-4}b_0)$. As we have that $\omega (a_{n-4}b_0)< 4\omega (b_1) < 4\omega (\varphi _0)/n$, it is immediate that necessarily (\ref{degh=2-lemma-label17}) holds. Further consider (E\! :\! \text{n\!\! --\! 5}). Here by (\ref{degh=2-lemma-label1}), (\ref{degh=2-lemma-label17}) and (\ref{2Ddegh=2-lemma-label2}) one can easily see that $\omega (a_{n-6}b_1)\leq  \omega (a_{n-5}b_0)$. Therefore,
$$
\omega (a_{n-7})\geq \omega (a_{n-6}b_1)= 2\omega (b_0)+ \omega (b_1).
$$
Proceeding by induction we prove that for $k\geq 2$ the following hold:
\begin{equation}
\label{2Ddegh=2-lemma-label3}
\omega (a_{n-2k})= (k-1) \omega (b_0) \quad \text{ and } \quad \omega (a_{n-2k-1})\geq (k-1) \omega (b_0) +\omega (b_1);
\end{equation}
cf. the Claim 1 in the proof of Lemma \ref{degh=2-lemma}. Now suppose $n=2m$ is even. Then
$$
\omega (a_0) =(m-1)\omega (b_0) < (2m-2) \omega (b_1) <(n-2)\omega (\varphi _0)/n;
$$
a contradiction to the {\sf Case P1}. Next suppose $n=2m-1$ is odd. Then $\omega (a_1) =(m-2)\omega (b_0)$ and therefore $\omega (a_1b_0) <(n-1)\omega (\varphi _0)/n$. By the inequality in (\ref{2Ddegh=2-lemma-label3}) we also have that $\omega (a_0) \geq (m-2)\omega (b_0)+ \omega (b_1)$ and so
$$
\omega (a_0b_1) > (m-1)\omega (b_0) =\omega (a_1b_0).
$$
Finally, by (E\! :\! 1) it follows that $\omega (\varphi _1) =\omega (a_1b_0)$, which is impossible.

Now consider the {\sf Case S1-2}. Note that by the inequality $\omega (a_{n-3}b_1)< \omega (b_0)< 2\omega (\varphi _0)/n$, (E\! :\! \text{n\!\! --\! 2}) gives that (\ref{degh=2-lemma-label2}) holds. A similar argument as for the {\sf Case P1} and the subcase {\sf Case S2} in the proof of Lemma  \ref{degh=2-lemma} gives that (\ref{degh=2-lemma-label4}) holds as well. Then in particular we have that
$$
\omega (a_0)= (n-2)\omega (b_1)< (n-2)\omega (\varphi _0)/n;
$$
a contradiction.

It remains to treat the third possibility, the {\sf Case S1-3}. Here it is immediate that
$\omega (a_{n-4}) \geq 2\omega (b_1)$. Hence it follows that
$$
\omega (a_{n-4}b_1) \geq 3\omega (b_1) =\omega (a_{n-3}b_0),
$$
where at the same time $\omega (a_{n-3}b_0)< 3\omega (\varphi _0)/n$. Thus by (E\! :\! \text{n\!\! --\! 3}) we conclude that
$$
\omega (a_{n-5}) \geq \omega (a_{n-3}b_0) =3\omega (b_1).
$$
By induction one can show that
\begin{equation}
\label{2Ddegh=2-lemma-label4}
\omega (a_{n-k})\geq (k-2) \omega (b_1), \quad \text{for $k=4,\ldots ,n$};
\end{equation}
cf. (\ref{degh=2-lemma-label20}). A similar argument as for the Claim 2 in the proof of Lemma \ref{degh=2-lemma} gives that the following hold.

\begin{claim}
We have \textup{(\ref{2Ddegh=2-lemma-label4})} and there is no $4\leq k<n$ such that for both $k$ and $k+1$ we have the strict inequalities in \textup{(\ref{2Ddegh=2-lemma-label4})}.
\end{claim}
Thus in particular we have that $\omega (a_0)=(n-2)\omega (b_1)$ or $\omega (a_1)= (n-3)\omega (b_1)$. The former possibility clearly cannot hold. And for the latter one  we have that $\omega (a_1b_0)=(n-1)\omega (b_1)< (n-1)\omega (\varphi _0)/n$. So by (E\! :\! 1) it is obvious that $\omega (a_1b_0)= \omega (a_0b_1)$ and then we deduce that again the first possibility necessarily holds; which we have shown that is impossible.

Next suppose the {\sf Case S2}; i.e., that $\omega (b_1)> \omega (\varphi _0)/n$.
Then $\omega (a_0b_1)> (n-1)$ $\omega (\varphi _0)/n$ and therefore by (E\! :\! 1) it follows that necessarily $\omega (a_1b_0)\geq (n-1)\omega (\varphi _0)/n$. Hence we deduce that
$$
\omega (a_1) \geq (n-1)\omega (\varphi _0)/n -\omega (b_0) > (n-3)\omega (\varphi _0)/n.
$$
Proceeding by induction we show that
\begin{equation}
\label{2Ddegh=2-lemma-label5}
\omega (a_j) > \frac{n-j-2}{n} \omega (\varphi _0), \quad \text{for $1\leq j\leq n-3$};
\end{equation}
cf. (\ref{degh=2-lemma-label0}). Thus in particular we have that both $\omega (a_{n-4}) > 2\omega (\varphi _0)/n$ and $\omega (a_{n-3}) > \omega (\varphi _0)/n$. Hence it follows that both $\omega (a_{n-3}b_1)$ and $\omega (a_{n-4}b_2)$ are greater than $2\omega (\varphi _0)/n$. By (E\! :\! \text{n\!\! --\! 2}) we conclude that necessarily
$$
\omega (b_0) =\omega (a_{n-2}b_0)\geq 2\omega (\varphi _0)/n,
$$
which cannot be.

\vspace{0.5ex}

$\bullet$ Next we prove that the {\sf Case P2} is impossible. For that purpose again suppose to the contrary and note that now $\omega (b_0)> (n-2)\omega (\varphi _0)/n$. We will consider the same two subcases, the {\sf Case P2-1} and {\sf Case P2-2}, as in the proof of Lemma \ref{degh=2-lemma}.

Assume the first one subcase; i.e., that $\omega (a_1)> \omega (\varphi _0)/n$. Then $\omega (a_1b_0)> (n-1)\omega (\varphi _0)/n$ and so by (E\! :\! 1) it is necessarily $\omega (a_0b_1)\geq  (n-1)\omega (\varphi _0)/n$. Hence, using the fact $n\geq 4$, it is immediate that
\begin{equation}
\label{2Ddegh=2-lemma-label6}
\omega (b_1)> (n-3)\omega (\varphi _0)/n \geq \omega (\varphi _0)/n;
\end{equation}
i.e., we are in the {\sf Case S2}. Now by (E\! :\! \text{n\!\! --\! 1}) we have that
\begin{equation}
\label{2Ddegh=2-lemma-label7}
\omega (a_{n-3})\geq \omega (\varphi _0)/n.
\end{equation}
Further observe that $\omega (a_{n-2}b_0)> 2\omega (\varphi _0)/n$, and also by (\ref{2Ddegh=2-lemma-label6}) and (\ref{2Ddegh=2-lemma-label7}) that $\omega (a_{n-3}b_1)> 2\omega (\varphi _0)/n$. Thus by (E\! :\! \text{n\!\! --\! 2}) we conclude that
$$
\omega (a_{n-4})= \omega (a_{n-4}b_2)\geq 2\omega (\varphi _0)/n.
$$
Proceeding by induction we obtain that
$$
\omega (a_j) \geq \frac{n-j-2}{n} \omega (\varphi _0), \quad \text{for $0\leq j\leq n-3$};
$$
cf. (\ref{2Ddegh=2-lemma-label5}). In particular, using the above inequality for $j=0$, we have that $$
\omega (a_0) \geq (n-2)\omega (\varphi _0)/n \geq 2\omega (\varphi _0)/n >\omega (a_0);
$$
a contradiction.

As the second step, assume the {\sf Case P2-2}. Here we have that
\begin{equation*}
\omega (a_3b_0) \geq \omega (b_0)> (n-2)\omega (\varphi _0)/n > \omega (\varphi _0)/n > \omega (a_1b_2).
\end{equation*}
As it is obviously
$$
\omega (\varphi _3)\geq (n-3)\omega (\varphi _0)/n \geq \omega (\varphi _0)/n,
$$
by (E\! :\! 3) it follows that necessarily $\omega (a_1)=\omega (a_2b_1)$. And therefore
$$
\omega (a_2)=\omega (a_1)-\omega (b_1)\leq \omega (a_1)< \omega (\varphi _0)/n.
$$
Similarly by considering (E\! :\! 4) we can show that then $\omega (a_2)=\omega (a_3b_1)$ and hence that $\omega (a_3)< \omega (\varphi _0)/n$. In fact by induction one can prove that
$$
\omega (a_j) <\frac{\omega (\varphi _0)}{n}, \quad \text{for $1\leq j\leq n-3$}.
$$
Thus in particular we deduce that $\omega (a_{n-3}b_2) < \omega (\varphi _0)/n$ and so by
(E\! :\! \text{n\!\! --\! 1}) that necessarily
\begin{equation*}
\omega (a_{n-3})=\omega (b_1) < \omega (\varphi _0)/n.
\end{equation*}
Now by the last inequality we have that
$$
\omega (a_0b_1)< 3\omega (\varphi _0)/n \leq (n-1)\omega (\varphi _0)/n,
$$
and so by (E\! :\! 1) we deduce that $\omega (a_1b_0)= \omega (a_0b_1)$; i.e., that
\begin{equation}
\label{2Ddegh=2-lemma-label8}
\omega (b_0)- \omega (a_0)= \omega (b_1)- \omega (a_1).
\end{equation}
But as we are in the {\sf Case P2-2} it is clear that $\omega (b_1)- \omega (a_1)< \omega (\varphi _0)/n$. At the same time for $n\geq 5$ we obtain that
$$
\omega (b_0)- \omega (a_0) > (n-2)\omega (\varphi _0)/n -2\omega (\varphi _0)/n \geq \omega (\varphi _0)/n.
$$
But thus we get that (\ref{2Ddegh=2-lemma-label8}) is impossible. Observe that the case $n=4$ must be treated separately. This easy checking is left to the reader.

\vspace{0.5ex}

$\bullet$ Our last step is to prove that the {\sf Case P3} is impossible as well. Suppose to the contrary and consider the {\sf Case S1}. First suppose that
\begin{itemize}
\item[]  $\omega (a_{n-4}) <\omega (b_0)$;
\end{itemize}
which is in fact the {\sf Case S2-1} in the proof of Lemma \ref{degh=2-lemma}. As (\ref{degh=2-lemma-label1}) holds, then we have that $\omega (a_{n-3}b_1)< 2\omega (\varphi _0)/n$ and therefore it immediately follows that (\ref{degh=2-lemma-label2}) holds. Furthermore with a little effort one can check that (\ref{degh=2-lemma-label4}) holds as well. Now consider (E\! :\! 1). As we have that $\omega (a_0b_1)= (n-1)\omega (b_1)< (n-1)\omega (\varphi _0)/n$, one moretime it follows the equality $\omega (a_1b_0)= \omega (a_0b_1)$. Hence we deduce that $\omega (b_0) =2\omega (b_1) < 2\omega (\varphi _0)/n$, which is impossible. Next suppose
\begin{itemize}
\item[]  $\omega (a_{n-4}) \geq\omega (b_0)$;
\end{itemize}
which covers both the {\sf Case S2-2} and {\sf Case S2-3} in the proof of Lemma \ref{degh=2-lemma}. Here we have that
$$
\omega (a_{n-4}b_2)\geq \omega (a_{n-2}b_0)> 2\omega (\varphi _0)/n > \omega (a_{n-3}b_1),
$$
and therefore by (E\! :\! \text{n\!\! --\! 2}) we obtain that $\omega (\varphi _2)= \omega (a_{n-3}b_1)$, which is impossible by the last inequality.

What remains to do is to treat the {\sf Case S2}. Now by (E\! :\! \text{n\!\! --\! 1}) it is clear that necessarily $\omega (a_{n-3})\geq \omega (\varphi _0)/n$. By inductive argument one can show that
$$
\omega (a_{n-k}) \geq \frac{k-2}{n} \omega (\varphi _0), \quad \text{for $3\leq k\leq n$};
$$
cf. (\ref{2Ddegh=2-lemma-label5}). In particular for $k=n$ we get the inequality $\omega (a_0) \geq (n-2)\omega (\varphi _0)/n$; a contradiction to the {\sf Case P3}.
Thus we have our lemma proved.
\end{proof}

It remains to establish the following fact.

\begin{lem}
\label{2Ddegh=3-lemma}
Let $\omega$ be a Dumas valuation of the second kind, defined on an integral domain $R$. Let $f\in R[X]$ be a polynomial as in the Conjecture, of degree $n=6$. Then there are no polynomials $g,h\in R[X]$ such that $f=gh$ and $\deg h=3$.
\end{lem}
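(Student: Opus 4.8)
The plan is to mirror the argument of Lemma~\ref{degh=3-lemma}, with every inequality reversed (replacing $\max$ by $\min$, $\leq$ by $\geq$, and so on), exactly in the way Lemma~\ref{2Ddegh=2-lemma} was obtained from Lemma~\ref{degh=2-lemma}. Since $n=6$ and $1\leq t=\deg h\leq s=\deg g$ with $s+t=6$, we are forced to have $s=t=3$, so we may write
$$
g=a_3X^3+a_2X^2+a_1X+a_0 \quad\text{and}\quad h=b_3X^3+b_2X^2+b_1X+b_0,
$$
and by Lemma~\ref{obvious lemma} we have $\omega(a_3)=0=\omega(b_3)$. Comparing coefficients in $f=gh$ yields, besides $\varphi_0=a_0b_0$ and $\varphi_6=a_3b_3$, the equations
$$
\varphi_1=a_0b_1+a_1b_0,\qquad \varphi_2=a_0b_2+a_1b_1+a_2b_0,
$$
$$
\varphi_3=a_0b_3+a_1b_2+a_2b_1+a_3b_0,\qquad \varphi_4=a_1b_3+a_2b_2+a_3b_1,\qquad \varphi_5=a_2b_3+a_3b_2.
$$
Since $\gcd(6,\omega(\varphi_0))=1$, the integer $\omega(\varphi_0)$ is coprime to $6$; hence each bound in {\sf (B2)} is in fact a strict inequality, and moreover $\omega(\varphi_0)/2\notin\mathbb N_0$. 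This last remark is the conceptual heart of the matter: were both ``Newton polygons'' (of $g$ and of $h$ with respect to $\omega$) single segments, their common slope would be $\omega(\varphi_0)/6$ and we would be forced to have $\omega(a_0)=\omega(b_0)=\omega(\varphi_0)/2$, which is impossible. The proof amounts to ruling out all the degenerate configurations as well.

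First I would exploit the symmetry of the situation. Because $f=gh=hg$ and $\deg g=\deg h$, the roles of $g$ and $h$ are interchangeable; relabelling if necessary we may assume $\omega(a_0)\leq\omega(b_0)$. As $\omega(a_0)+\omega(b_0)=\omega(\varphi_0)$ is odd, the two values are distinct, so in fact
$$
\omega(a_0)<\omega(\varphi_0)/2<\omega(b_0).
$$
Note that this already collapses the three main cases of Lemma~\ref{degh=3-lemma}: the analogue of {\sf Case P3} (the ``middle'' range, squeezed here between $3\omega(\varphi_0)/6$ and $3\omega(\varphi_0)/6$) is empty, and the analogue of {\sf Case P1} follows from that of {\sf Case P2} by the very symmetry just used. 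Hence it suffices to treat the single situation above, which makes the case tree genuinely shorter than in Lemma~\ref{degh=3-lemma}.

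Within this situation I would branch, as in the earlier proofs, on the valuation of the next-to-leading coefficient $b_2$ of $h$, distinguishing {\sf Case S1}: $\omega(b_2)<\omega(\varphi_0)/6$ from {\sf Case S2}: $\omega(b_2)>\omega(\varphi_0)/6$, equality being excluded by the $\gcd$ condition. In each branch one then refines further by comparing $\omega(a_2)$ with $\omega(b_1)$ and $\omega(a_1)$ with $\omega(b_0)$ (the analogues of Cases {\sf V} and {\sf W} there), and propagates the valuations through the equations for $\varphi_5,\varphi_4,\dots,\varphi_1$ by repeated use of Lemma~\ref{ba1}(iv): whenever a sum of products has exactly one summand of strictly smallest $\omega$-value, that value is transmitted, and otherwise one reads off a useful equality between two $\omega$-values. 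Carrying this out, each branch terminates either by producing some $\varphi_j$ with $\omega(\varphi_j)<(6-j)\omega(\varphi_0)/6$, contradicting {\sf (B2)}, or --- as in {\sf Case W1} of Lemma~\ref{degh=3-lemma} --- by forcing the valuation of a fixed coefficient (typically $b_2$ or $b_1$) to exceed every natural number, which is absurd.

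The main obstacle is not any single computation but the bookkeeping: one must check that the chosen comparisons are exhaustive and mutually exclusive, so that every degenerate configuration is covered. The delicate branches are those in which neither $g$ nor $h$ has a ``straight'' Newton polygon, for then no valuation is read off directly and one must chase equalities of the shape $\omega(a_ib_j)=\omega(a_kb_\ell)$ through several of the equations before a contradiction with {\sf (B2)} surfaces; these are precisely the places where the infinite-ascent argument is invoked.
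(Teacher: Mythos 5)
Your plan coincides with the paper's own proof in all essentials: the paper too reduces to the single case $\omega (a_0)<\omega (\varphi _0)/2$ via the $g\leftrightarrow h$ symmetry (available precisely because $s=t=3$), then branches on $\omega (b_2)$ versus $\omega (\varphi _0)/6$ and propagates valuations through the equations (E\! :\! 5) down to (E\! :\! 1) by Lemma \ref{ba1}(iv) until some $\omega (\varphi _j)$ violates {\sf (B2)}. The only deviations are cosmetic: the paper's sub-branches compare $\omega (b_1)$ with the threshold $\omega (\varphi _0)/3$ rather than running the {\sf V}/{\sf W}-type comparisons you list, and every branch ends in a direct {\sf (B2)} contradiction, so the infinite-ascent terminal you anticipate is never needed here (it occurs only in the first-kind Lemma \ref{degh=3-lemma}).
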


\begin{proof}
We will again use the same notation as before. Now suppose that such $g=a_3X^3+a_2X^2+a_1X+a_0$ and $h=b_3X^3+b_2X^2+b_1X+b_0$ do exist; i.e., analogously as in the proof of Lemma \ref{degh=3-lemma}, we have the following equalities:
 \begin{equation*}
  \begin{split}
    a_1b_0+a_0b_1 & =\varphi _1 \qquad\qquad\qquad\qquad\qquad\qquad   (E\! :\! 1) \\
    a_2b_0 + a_1b_1 +a_0b_2 & =\varphi _2 \qquad\qquad\qquad\qquad\qquad\qquad   (E\! :\! 2) \\
    a_3b_0 +a_2b_1 +a_1b_2 + a_0b_3 & =\varphi _3 \qquad\qquad\qquad\qquad\qquad\qquad  (E\! :\! 3) \\
    a_3b_1 +a_2b_2 + a_1b_3 & =\varphi _4 \qquad\qquad\qquad\qquad\qquad\qquad  (E\! :\! 4) \\
    a_3b_2 +a_2b_3 & =\varphi _5 \qquad\qquad\qquad\qquad\qquad\qquad  (E\! :\! 5)
   \end{split}
  \end{equation*}
Consider the two possibilities, that are analogues of the previously formulated ones:
\begin{itemize}
 \item[{\sf Case}]{\sf P1}. $\omega (a_0) > \omega (\varphi _0)/2$.
 \item[{\sf Case}]{\sf P2}. $\omega (a_0) < \omega (\varphi _0)/2$.
\end{itemize}

\vspace{0.5ex}

We will treat the second one, where we note that thus it will be settled the first one case as well. (Namely, as we have $s=3=t=6/2$ the roles of the polynomials $g$ and $h$ are ``symmetric''; i.e., we just have to ``replace the coefficient symbols $a_i$ and $b_i$'' in our arguments given below.) Then assume the following subcase:
\begin{itemize}
 \item[{\sf Case}]{\sf S1}. $\omega (b_2) < \omega (\varphi _0)/6$.
\end{itemize}

\vspace{0.5ex}

At the beginning note that by (E\! :\! 5) we have the equality
\begin{equation}
\label{2Ddegh=3-lemma-label1}
\omega (a_2) = \omega (b_2).
\end{equation}
Further, consider this sub-subcase (cf. {\sf Case S1-3} in the proof of Lemma \ref{degh=3-lemma}):
\begin{itemize}
 \item[{\sf Case}] $\omega (b_1) < \omega (\varphi _0)/3$.
\end{itemize}
As now we have the inequality $\omega (a_0b_1) < 5\omega (\varphi _0)/6$, by (E\! :\! 1) it is immediate that $\omega (a_1b_0)=\omega (a_0b_1)$. Hence it is clear that in particular
\begin{equation}
\label{2Ddegh=3-lemma-label2}
\omega (b_1) > \omega (a_1).
\end{equation}
Next, by (\ref{2Ddegh=3-lemma-label1}) it follows that $\omega (a_2b_2) < \omega (\varphi _0)/3$. And then by (E\! :\! 4) and the inequality $\omega (a_3b_1) > \omega (a_1b_3)$, which is plain by (\ref{2Ddegh=3-lemma-label2}), we deduce that
\begin{equation}
\label{2Ddegh=3-lemma-label3}
\omega (a_1) =\omega (a_2b_2)= 2\omega (b_2).
\end{equation}
Further we have that $\omega (a_0b_3)< \omega (a_3b_0)$ and also by (\ref{2Ddegh=3-lemma-label1}) and (\ref{2Ddegh=3-lemma-label2}) the inequality $\omega (a_1b_2)< \omega (a_2b_1)$. Therefore by (E\! :\! 3) it is immediate that
\begin{equation}
\label{2Ddegh=3-lemma-label4}
\omega (a_0) =\omega (a_1b_2)= 3\omega (b_2).
\end{equation}
Now consider (E\! :\! 2). Observe that $\omega (a_0b_2)< 2\omega (\varphi _0)/3$ and $\omega (a_2b_0)> \omega (a_0b_2)$. And also by (\ref{2Ddegh=3-lemma-label2}), (\ref{2Ddegh=3-lemma-label3}) and (\ref{2Ddegh=3-lemma-label4}) we have that
$$
\omega (a_1b_1) > 2\omega (a_1) = 4\omega (b_2)= \omega (a_0b_2).
$$
Thus we obtain that $\omega (\varphi _2)=\omega (a_0b_2)< 2\omega (\varphi _0)/3$, which is impossible.

As the second possibility we treat the following analogue of the {\sf Case S1-4}, established in the proof of Lemma \ref{degh=3-lemma}):
\begin{itemize}
 \item[{\sf Case}] $\omega (b_1) > \omega (\varphi _0)/3$.
\end{itemize}
Using that $\omega (a_2b_2) < \omega (\varphi _0)/3 < \omega (a_3b_1)$, by (E\! :\! 4) we again have (\ref{2Ddegh=3-lemma-label3}). And therefore it follows at once that (\ref{2Ddegh=3-lemma-label2}) holds as well. We proceed in exactly the same way as for the previous sub-subcase.

Next assume this subcase:
\begin{itemize}
 \item[{\sf Case}]{\sf S2}. $\omega (b_2) > \omega (\varphi _0)/6$.
\end{itemize}

\vspace{0.5ex}

Now by (E\! :\! 5) we clearly have that necessarily
\begin{equation}
\label{2Ddegh=3-lemma-label5}
\omega (a_2) \geq \omega (\varphi _0)/6.
\end{equation}
Again we consider the same two sub-subcases as for the {\sf Case S1}. Let us begin with the first one; i.e., when $\omega (b_1)< \omega (\varphi _0)/3$. Then the same argument as for (\ref{2Ddegh=3-lemma-label2}) gives that this inequality holds here as well. As one more time we have that $\omega (a_3b_1)> \omega (a_1b_3)$ and by (\ref{2Ddegh=3-lemma-label5}) the inequality
$\omega (a_2b_2)> \omega (\varphi _0)/3$, by (E\! :\! 4) it follows that necessarily
\begin{equation}
\label{2Ddegh=3-lemma-label6}
\omega (a_1)\geq \omega (\varphi _0)/3.
\end{equation}
And therefore by (\ref{2Ddegh=3-lemma-label2}) we have that also $\omega (b_1)> \omega (\varphi _0)/3$; a contradiction.

It remains to treat the second sub-subcase, when $\omega (b_1)> \omega (\varphi _0)/3$. Then we again have (\ref{2Ddegh=3-lemma-label6}). Now consider (E\! :\! 3). Observe that by using (\ref{2Ddegh=3-lemma-label5}) and (\ref{2Ddegh=3-lemma-label6}) we obtain at once that $\omega (a_1b_2)$, $\omega (a_2b_1)$ and $\omega (a_3b_0)$ are greater than $\omega (\varphi _0)/2$. And so it follows that
$$
\omega (\varphi _3) =\omega (a_0b_3) =\omega (a_0)< \omega (\varphi _0)/2,
$$
which is impossible. This concludes our proof of the lemma.
\end{proof}

As we did by Corollary \ref{Dumas1-cor2} for the Dumas valuations of the first kind given by $\omega =\deg _{\boldsymbol{\nu}}$ here we state its close analogue for the $\codeg _{\boldsymbol{\nu}}$. Also recall that now our method gives something novel only for polynomials in three or more variables.

\begin{cor}
\label{Dumas2-cor1}
Let $A$ be an integral domain and $R=A[X_1,\ldots ,X_k]$, for $k\geq 2$. Suppose we have $n>1$, some $\boldsymbol{\nu}\in \mathbb N_0^k$ and any polynomials $\varphi _0,\varphi _1,\ldots ,\varphi _n\in R$ so that $\codeg _{\boldsymbol{\nu}} \varphi _n=0$, $\mu =\codeg _{\boldsymbol{\nu}} \varphi _0$ is a positive number satisfying $\gcd (n,\mu )=1$ and also
\begin{equation}
\label{Dumas2-cor1-label1}
\codeg _{\boldsymbol{\nu}} \varphi _j \geq \Bigl\lceil \frac{(n-j)}{n}\mu \Bigr\rceil \qquad \text{for $j=1,\ldots ,n-1$}.
\end{equation}
Then for $n\leq 6$ the polynomial $f=\sum _{j=0}^n\varphi _jX^j$ is irreducible in the ring of polynomials $A[X,X_1,\ldots ,X_k]$.
\end{cor}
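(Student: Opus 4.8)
The plan is to deduce the corollary directly from Theorem~\ref{THMDumas2kind} by choosing the right Dumas valuation. I would set $\omega=\codeg_{\boldsymbol{\nu}}$. By Corollary~\ref{corZ1} the map $\omega\colon R\to\mathbb N_0^{+\infty}$ is a Dumas valuation of the second kind, so all that remains is to check that the hypotheses of the corollary translate into conditions {\sf (A)} and {\sf (B2)} of the Conjecture for the polynomial $f=\sum_{j=0}^n\varphi_jX^j$, and then to invoke the theorem in the range $n\le 6$.

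First I would verify {\sf (A)}. The assumption $\codeg_{\boldsymbol{\nu}}\varphi_n=0$ is exactly $\omega(\varphi_n)=0$; in particular $\varphi_n\neq 0$, so $\deg_X f=n$. Next, $\omega(\varphi_0)=\mu$ is by hypothesis a positive integer with $\gcd(n,\mu)=1$, which is the remaining half of {\sf (A)}. For {\sf (B2)} I would simply drop the ceiling: for each $1\le j\le n-1$ the hypothesis (\ref{Dumas2-cor1-label1}) gives
$$
\omega(\varphi_j)=\codeg_{\boldsymbol{\nu}}\varphi_j\ \ge\ \Bigl\lceil\tfrac{(n-j)}{n}\mu\Bigr\rceil\ \ge\ \tfrac{(n-j)}{n}\mu,
$$
and multiplying by $n$ yields $n\,\omega(\varphi_j)\ge(n-j)\mu=(n-j)\,\omega(\varphi_0)$, which is precisely {\sf (B2)}. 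The ceiling is merely the integral shadow of the real bound $\tfrac{n-j}{n}\mu$; since $\gcd(n,\mu)=1$ and $1\le j\le n-1$ this real number is never an integer, so the inequality is in fact strict, in agreement with the Remark preceding Lemma~\ref{degh=2-lemma}.

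With {\sf (A)} and {\sf (B2)} in hand and $n\le 6$, Theorem~\ref{THMDumas2kind} asserts that $f$ admits no factorization $f=gh$ with $g,h\in R[X]$ both of positive degree in $X$; since $R[X]=A[X,X_1,\ldots,X_k]$ this is the claimed irreducibility in the sense fixed by the Conjecture. I do not expect any genuine obstacle, as the entire content has been pushed into Theorem~\ref{THMDumas2kind}. The one point deserving care---and the closest thing to a subtlety---is the passage from ``no factorization into two factors of positive $X$-degree'' to irreducibility in the full ring $A[X,X_1,\ldots,X_k]$: a priori one must still exclude a factor $c\in R$ of $X$-degree $0$ but positive total degree, i.e.\ a nonconstant common divisor of all the $\varphi_j$. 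Such a $c$ would divide $\varphi_n$, whence $\codeg_{\boldsymbol{\nu}}(c)=0$ by (D0) and (D1) for $\omega$, but this alone does not force $c$ to be a unit. Accordingly I would read ``irreducible'' here in the sense fixed by the Conjecture; alternatively, full irreducibility in $A[X,X_1,\ldots,X_k]$ follows immediately once one additionally assumes $f$ primitive over $R$, which is automatic in the applications where $\varphi_n$ is a unit.
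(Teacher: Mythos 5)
Your proof is correct and is precisely the paper's (implicit) argument: the corollary is presented as an immediate consequence of Theorem \ref{THMDumas2kind} applied with $\omega=\codeg_{\boldsymbol{\nu}}$, which is a Dumas valuation of the second kind by Corollary \ref{corZ1}, the hypotheses translating verbatim into conditions {\sf (A)} and {\sf (B2)} (with the ceiling dropped exactly as you do). Your closing caveat is also the correct reading: ``irreducible'' here must be taken in the sense fixed by the Conjecture (no factorization with both factors of positive $X$-degree), since a common nonconstant divisor of all the $\varphi_j$ of $\boldsymbol{\nu}$-codegree zero is indeed not excluded by the hypotheses --- just as in the parallel Corollary \ref{Dumas1-cor2}.
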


Let us conclude this section by one more instructive example.

\begin{ex}
\label{Dumas2-ex1}
Let $A$ be any integral domain and define $R=A[Y,Z]$. Let $\varphi _3\in R$ be a polynomial such that its constant term is nonzero. Let also $\psi _0,\psi _1,\psi _2\in R$ be polynomials satisfying the following. The total degree of each monomial of $\psi _0$ is greater than $4$; i.e., $\codeg \psi _0\geq 4$. And also assume that both $\codeg \psi _1\geq 3$ and $\codeg \psi _2\geq 2$. We claim that for any $a,b,c\in A^{\times}$ the polynomial
$$
f=\varphi _3(Y,Z)X^3+\bigl( aY^2+\psi _2(Y,Z)\bigr) X^2 + \bigl( bZ +\psi _1(Y,Z)\bigr) X + \bigl( cYZ +\psi _0(Y,Z)\bigr)
$$
is irreducible in the ring $A[X,Y,Z]$.

First observe that here we cannot choose the Dumas valuation $\omega =\codeg$, as then for $\varphi _1=bZ +\psi _1$ we have $\omega (\varphi _1)=1$ while for $\varphi _0=cYZ+\psi _0$ we have that $\omega (\varphi _0)=2$. Therefore (\ref{Dumas2-cor1-label1}) for $j=1$ does not hold. We also cannot take neither $\omega =\codeg _Y$ nor $\omega =\codeg _Z$. For example in the former possibility we can have the monomial $eZ^4$, for some $e\in A^{\times}$, and therefore $\omega (\varphi _0)=0$; which cannot be. But we can take $\boldsymbol{\nu}=(1,3)$ and then consider the Dumas valuation $\omega =\codeg _{\boldsymbol{\nu}}$. Then we have $\omega (\varphi _0)=4$, $\omega (\varphi _1)=3$ and for the polynomial $\varphi _2=aY^2+\psi _2$ the equality
$\omega (\varphi _2)=2$. Thus we also have both
$$
\omega (\varphi _1) \geq \Bigl\lceil \frac{(3-1)}{3}\omega (\varphi _0) \Bigr\rceil =3 \quad \text{ and } \quad \omega (\varphi _2) \geq \Bigl\lceil \frac{(3-2)}{3}\omega (\varphi _0) \Bigr\rceil =2.
$$
So we can apply the above corollary.
\end{ex}

\section{Polynomials over orders in algebraic number fields}
\label{Orders}

As we already noted in the Introduction, the classical Eisenstein-Dumas irreducibility criterion holds true if $\mathbb Z$ is replaced by any UFD. Now we would like to formulate its partial generalization which works within one very interesting and quite large class of integral domains, that often will not be UFDs. These are orders in algebraic number fields; see \cite[Ch$.$ I, \S 12]{N}.

Here first recall the following. Given a finite field extension $\mathbb K|\mathbb F$, for any $x\in \mathbb K$ consider an $\mathbb F$-linear map $f_x:\mathbb K\to \mathbb K$, $f_x(y)=xy$. Then the norm map $N=N_{\mathbb K|\mathbb F} :\mathbb K\to \mathbb F$ is defined by $N(x)=\det f_x$.

Assume $\mathbb K$ is an algebraic number field and let $\mathcal O_{\mathbb K}$ be its ring of integers. Let us emphasize that in what follows we always assume that our algebraic number field is contained in $\mathbb C$. Suppose $R\leq \mathcal O_{\mathbb K}$ is an order. In particular if $R\neq \mathcal O_{\mathbb K}$, then it is well known that $R$ is not a UFD; see, e.g., \cite[Ex$.$ 6 for Ch$.$ II]{FT}. Thus for the set of all irreducibles $\Irr R$ and the set of all primes $\Pr R$, for such $R$, we have the strict inclusion $\Pr R\subset \Irr R$; see \cite[Prop$.$ 1.6]{Sch}. Also recall some standard facts about the maximal orders $R=\mathcal O_{\mathbb K}$; see \cite[Ch$.$ VII, \S\S 2 and 3]{Bo}. These rings are Dedekind domains. A Dedekind domain is a UFD if and only if it is a principal ideal domain (PID). Thus we in particular have that the following are equivalent: (a) The class number $h_{\mathbb K}$ equals $1$; (b) $\mathcal O_{\mathbb K}$ is a UFD; (c) $\mathcal O_{\mathbb K}$ is a PID. In any case, whether $R$ is the maximal order or not, it is desirable to gain our knowledge about the corresponding sets of primes $\Pr R$. For that purpose let us state  here the following lemma, which is one of the main results of \cite{Sch}; see Remark \ref{Orders-rmk1} for related observations.

\begin{lem}
\label{Orders-lem1}
Let $\mathbb K$ be an algebraic number field and assume $R$ is an order in $\mathbb K$. If $\mathfrak p \in R$ is an element such that for the norm $N=N_{\mathbb K|\mathbb Q}$ we have $N(\mathfrak p )=p$, for some $p\in \Pr \mathbb Z$, then $\mathfrak p\in \Pr R$.
\end{lem}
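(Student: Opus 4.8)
The plan is to prove the equivalent statement that the principal ideal $\mathfrak p R$ is a nonzero proper prime ideal of $R$; by the definition of a prime element recalled just before the lemma, this is exactly the assertion $\mathfrak p\in\Pr R$. The two tools I will use are the multiplicativity of the norm $N=N_{\mathbb K|\mathbb Q}$ (immediate from $N(x)=\det f_x$) and the fact that an order $R$ in $\mathbb K$ is a free $\mathbb Z$-module of rank $m=[\mathbb K:\mathbb Q]$ whose quotient field is $\mathbb K$.

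First I would dispose of the trivial conditions. Since $N(\mathfrak p)=p\neq 0$ we have $\mathfrak p\neq 0$. Moreover $\mathfrak p$ is not a unit: if $\mathfrak p u=1$ for some $u\in R$, then $N(\mathfrak p)N(u)=N(1)=1$, and since $R\leq\mathcal O_{\mathbb K}$ every element of $R$ is an algebraic integer, so both $N(\mathfrak p)$ and $N(u)$ lie in $\mathbb Z$; this forces $N(\mathfrak p)=\pm 1$, contradicting $N(\mathfrak p)=p$. Hence $\mathfrak p R\neq 0$ and $\mathfrak p R\neq R$.

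The decisive step is to compute the index $[R:\mathfrak p R]$ of abelian groups. I would note that the multiplication map $f_{\mathfrak p}\colon y\mapsto\mathfrak p y$, which on $\mathbb K$ is the $\mathbb Q$-linear map defining $N(\mathfrak p)=\det f_{\mathfrak p}$, restricts to an injective $\mathbb Z$-linear endomorphism of $R$ (injective because $R$ is a domain and $\mathfrak p\neq 0$) with image precisely $\mathfrak p R$. Choosing a $\mathbb Z$-basis of $R$, which is simultaneously a $\mathbb Q$-basis of $\mathbb K$, the matrix of $f_{\mathfrak p}$ has integer entries and determinant $N(\mathfrak p)$; by the elementary divisor (Smith normal form) theorem its image has index $|\det f_{\mathfrak p}|=|N(\mathfrak p)|=p$. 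Thus the quotient ring $S=R/\mathfrak p R$ has exactly $p$ elements. I expect this identification of the abstractly defined norm $\det f_{\mathfrak p}$ with the lattice index $[R:\mathfrak p R]$ to be the one point needing genuine care; everything else is then formal.

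It remains to observe that a commutative unital ring $S$ of prime order $p$ is a field, hence in particular an integral domain. Indeed $(S,+)$ is cyclic of order $p$, and since $S\neq 0$ its identity generates $(S,+)$, so the natural ring map $\mathbb Z\to S$ is surjective with kernel $p\mathbb Z$, giving $S\cong\mathbb F_p$. Therefore $\mathfrak p R$ is a prime ideal, and together with $\mathfrak p\neq 0$ and $\mathfrak p\notin R^{\ast}$ this shows $\mathfrak p\in\Pr R$, as required.
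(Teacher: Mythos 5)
Your proposal is correct and complete. Note, however, that the paper itself contains no proof of this lemma: it is quoted as one of the main results of the companion paper \cite{Sch}, so there is no internal argument here to compare against line by line. Your route --- identify $N(\mathfrak p)=\det f_{\mathfrak p}$ with the lattice index $[R:\mathfrak pR]$ via Smith normal form, then observe that a unital commutative ring of prime order is $\mathbb F_p$ --- is the standard one, and you correctly isolate the only point needing care: a $\mathbb Z$-basis of the order $R$ is simultaneously a $\mathbb Q$-basis of $\mathbb K$, so the integer matrix of multiplication by $\mathfrak p$ on $R$ has determinant exactly $N(\mathfrak p)$, whence $R/\mathfrak pR$ has $p$ elements. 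This is precisely what makes the conclusion work for an \emph{arbitrary} order, not just $\mathcal O_{\mathbb K}$: freeness of rank $m$ over $\mathbb Z$ is all that is used, and the quotient $R/\mathfrak pR\cong\mathbb F_p$ is a domain whether or not $R$ is maximal. That is exactly the strengthening the paper emphasizes (primality rather than the classically known irreducibility, cf.\ the remark following the lemma citing \cite{NZM} and \cite{AW}), and your argument delivers it directly. One cosmetic point: your separate verification that $\mathfrak p\notin R^{\ast}$ via $N(\mathfrak p)N(u)=1$ is redundant, since $[R:\mathfrak pR]=p>1$ already forces $\mathfrak pR\neq R$; it is harmless, but the proof reads more cleanly with the index computation doing both jobs.
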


\noindent
Observe that this lemma is a nontrivial generalization of the known fact which claims that for $R=\mathcal O_{\mathbb K}$ and $\mathfrak p,p$ as above we can conclude that $\mathfrak p \in \Irr R$; see \cite[Thm. 9.24]{NZM} and \cite[Thm. 9.2.3]{AW}.

For later use recall some further standard facts. Suppose $\mathbb K|\mathbb Q$ is an algebraic number field of degree $m$. Then we have $m$ embeddings $\sigma _j:\mathbb K\to \mathbb C$, for $j=0,1,\ldots ,m-1$, over the inclusion $\mathbb Q \hookrightarrow \mathbb C$; where we can put $\sigma _0$ to be the identity. Here the norm $N=N_{\mathbb K|\mathbb Q}: \mathbb K\to \mathbb Q$ is given as $N(x)=\prod _{i=0}^{m-1}\sigma _i(x)$, for $x\in\mathbb K$. If $\mathbb K$ is moreover a normal extension, then by $G_{\mathbb K} ={\it Gal}(\mathbb K|\mathbb Q)$ denote its Galois group; i.e., $G_{\mathbb K}=\{ \sigma _0,\sigma _1,\ldots ,\sigma _{m-1}\}$. Also observe that $N(x)=N(\sigma (x))$ for every $\sigma \in G_{\mathbb K}$ and recall that $N(x)\in \mathbb Z$ if $x\in \mathcal O_{\mathbb K}$. In particular for a quadratic field $\mathbb K=\mathbb Q(\sqrt{d})$, where always $1\neq d\in\mathbb Z^{\times}$ is square-free, for $x=a+b\sqrt{d}$ we define the conjugate $x^{\ast}=a-b\sqrt{d}$. And thus the norm $N(x)$ equals $xx^{\ast}$.

\begin{rmk}
\label{Orders-rmk1}
Let $\mathbb K$ be an algebraic number field.

(1) If we have two orders $R_1\leq R_2\leq \mathcal O_{\mathbb K}$ and for some $x\in R_1$ we have $N(x)=p$, for some $p\in \Pr \mathbb Z$, then $x\in \Pr R_1$ and $x\in \Pr R_2$ as well. But in general the two sets of primes will not be equal. For example take $\mathbb K=\mathbb Q(\sqrt{5})$ and the non-maximal order $R=\mathbb Z[\sqrt{5}]$ in $\mathcal O_{\mathbb K}=\mathbb Z[\omega ]$, where $\omega =(1+\sqrt{5})/2$. Then the number $\mathfrak p=2+\omega$ is a prime in $\mathcal O_{\mathbb K}$, but at the same time it is not an element of $R$.

(2) For $\mathbb K$ and $R$ as in the above lemma in \cite[Thm. 0.1]{Sch} we have also proved this claim: If $\mathfrak q\in R$ satisfies $N(\mathfrak q)=a$, where $a\in \mathbb Z^{\times}$ is a composite number which is not of the form $a=p^k$ for some $p\in \Pr \mathbb Z$ and $k\geq 2$, then $\mathfrak q\not\in \Pr R$. Related to that note the following. Suppose $\mathfrak q\in R$ is such that $N(\mathfrak q)=p^k$, for some $p\in \Pr \mathbb Z$ and $k\geq 2$, so that there is no $x\in R$ and $\ell <k$ satisfying $N(x)=p^{\ell}$. Then we can conclude that $\mathfrak q$ is an irreducible element, but in general it will not be a prime. Namely as $R$ is a Noetherian ring, every element of $R$ can be written as a product of irreducible ones; see \cite[Thm. 3.2.2]{AW}. Hence it is clear that $\mathfrak q\in \Irr R$. On the other hand consider again $R=\mathbb Z[\sqrt{5}]$. It is easy to see that there is no $x\in R$ satisfying $N(x)=2c$, for some odd integer $c$. Thus it follows that for example the element $\gamma =3+\sqrt{5}$ is irreducible. But $\gamma$ is not a prime. Namely we have that $\gamma \gamma ^{\ast} =-4=2(-2)$, but it is easy to check that $\gamma \notdiv 2$. \end{rmk}

Having in mind what we have observed about the orders in algebraic number fields, let us state the following result which is a direct consequence of our Theorem \ref{mainTHM}.

\begin{thm}
\label{Orders-thm1}
Let $\mathbb K$ be an algebraic number field and $R$ be an order in $\mathbb K$. Let $f=c_nX^n+\cdots +c_1X+c_0$ be a polynomial in $R[X]$. Suppose there is some $\mathfrak p\in \Pr R$ such that: {\sf (C1)} $\ord _{\mathfrak p}(c_n)=0$; {\sf (C2)} $\gcd (n,\ord _{\mathfrak p}(c_0))=1$; {\sf (C3)} $(n-j)\ord _{\mathfrak p}(c_0) \leq n\ord _{\mathfrak p}(c_j)$, for every $1\leq j\leq n-1$. Then for $n\leq 6$ the polynomial $f$ is irreducible over $R$.
\end{thm}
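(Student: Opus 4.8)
The plan is to realize $\ord_{\mathfrak p}$ as a Dumas valuation of the second kind on $R$ and then invoke part (II) of Theorem \ref{mainTHM}; the hypotheses {\sf (C1)}--{\sf (C3)} are tailored to reproduce exactly the conditions {\sf (A)} and {\sf (B2)} of the Conjecture. I would first dispose of the trivial case $n=1$, where $f$ has degree one and so cannot be a product of two nonconstant polynomials, and henceforth assume $2\le n\le 6$.

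The key preliminary step is to make $\ord_{\mathfrak p}$ available. An order $R$ in $\mathbb K$ is a subring of $\mathcal O_{\mathbb K}$ which is a full $\mathbb Z$-lattice, hence finitely generated as a $\mathbb Z$-module; any ideal of $R$ is in particular a $\mathbb Z$-submodule of this finitely generated module over the Noetherian ring $\mathbb Z$, thus itself finitely generated, so $R$ is a Noetherian integral domain. Since $\mathfrak p\in\Pr R$, Lemma \ref{lemU1} guarantees that $\ord_{\mathfrak p}$ is well defined and Corollary \ref{corU2} shows that $\omega:=\ord_{\mathfrak p}:R\to\mathbb N_0^{+\infty}$ is a Dumas valuation of the second kind. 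Note that this works even when $R$ is not a UFD, which is the whole point of treating orders.

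It then remains to check {\sf (A)} and {\sf (B2)} for $\omega$, taking $\varphi_j=c_j$. By {\sf (C1)} we have $\omega(\varphi_n)=0$. Condition {\sf (C2)} gives $\gcd(n,\omega(\varphi_0))=1$, and since $n\ge 2$ this forces $\omega(\varphi_0)\neq 0$, because otherwise the greatest common divisor would equal $n>1$; hence $\omega(\varphi_0)\in\mathbb N$, so {\sf (A)} holds. Rewriting {\sf (C3)} as $n\omega(\varphi_j)\ge(n-j)\omega(\varphi_0)$ for $1\le j\le n-1$ is precisely {\sf (B2)}. As $\omega$ is of the second kind and $n\le 6$, part (II) of Theorem \ref{mainTHM}, i.e. Theorem \ref{THMDumas2kind}, yields that $f$ cannot be written as a product of two nonconstant polynomials in $R[X]$, which is the asserted irreducibility. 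The closest thing to an obstacle is the structural observation that an arbitrary order is Noetherian (so that the second-kind valuation exists without any UFD hypothesis) together with the small but essential remark that {\sf (C2)} and $n>1$ force $\omega(\varphi_0)$ to be a positive integer; beyond these, the argument is a direct translation of the hypotheses into the Conjecture and an appeal to the already-established main theorem.
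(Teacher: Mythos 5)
Your proposal is correct and is exactly the route the paper takes: the paper states Theorem \ref{Orders-thm1} as a direct consequence of part (II) of Theorem \ref{mainTHM}, with $\ord_{\mathfrak p}$ made available as a Dumas valuation of the second kind via Lemma \ref{lemU1} and Corollary \ref{corU2}, since an order is a Noetherian integral domain (no UFD hypothesis needed). Your two supplementary checks --- disposing of $n=1$ and noting that {\sf (C2)} with $n>1$ forces $\ord_{\mathfrak p}(c_0)\in\mathbb N$ --- are precisely the details the paper leaves implicit.
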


\noindent
Note that when the considered order $R$ is not a UFD, this theorem (for $n\leq 6$) presents  nontrivial generalization of the Eisenstein-Dumas criterion given in the Introduction; see also Theorem \ref{Orders-thm2} given below. Let us give an illustrative example.

\begin{ex}
\label{Orders-ex1}
Consider the ring $R=\mathbb Z[\sqrt{10}]$. We claim that the polynomial
\begin{equation}
\label{Orders-ex1-label1}
f=X^n +(b\, 33!)X +45!
\end{equation}
is irreducible over $R$ for any $b\in\mathbb Z$ and $n\leq 6$. Here first observe that $R=\mathcal O_{\mathbb K}$ for $\mathbb K=\mathbb Q(\sqrt{10})$, where it is well known that $R$ is not a UFD. Further, by knowing some basic facts about generalized Pell equations one can show that $p=31$ is the minimal natural prime number such that the Diophantine equation $N(w)=p$ has solutions in $x,y\in \mathbb Z$, where $w=x+y\sqrt{10}$. For example, $w=\mathfrak p_1= 11+3\sqrt{10}$ and $w=\mathfrak p_2=29+9\sqrt{10}$ are two solutions. And therefore $\mathfrak p_1,\mathfrak p_2\in \Pr R$. Now take $\mathfrak p$ to be any of these two primes. We claim that $\ord _{\mathfrak p}(45!)=1$. To see this suppose that $\mathfrak p^2| 45!$; i.e., that $45! =\mathfrak p^2z$ for some $z\in R$. If we define $\ell =45!/31$ it is clear that $\mathfrak p \notdiv \ell$. And therefore by the equality $\mathfrak p\mathfrak p^{\ast}\ell =\mathfrak p^2z$ we conclude that necessarily $\mathfrak p|\mathfrak p^{\ast}$. But this is not the case and so the proposed equality holds; cf. the proof of the corollary below. At the same time obviously $\ord _{\mathfrak p}(b\, 33!)\geq 1$. By the previous theorem our claim about $f$ follows.
\end{ex}

In fact for the polynomial $f$ of the previous example we can show even more. But in order to be more precise we need the following corollary; here for the map $\ord _p$ see Lemma \ref{lemU1}. Note that for particular polynomials under consideration this corollary will be more appropriate for use than the last theorem.

\begin{cor}
\label{Orders-cor1}
Let $d\in \mathbb Z$ be square-free and $\mathbb K=\mathbb Q(\sqrt{d})$ be the corresponding quadratic field. Let $R$ be an order in $\mathbb K$ and $f=c_nX^n+\cdots +c_1X+c_0$ be a polynomial in $R[X]$. Assume that $2\neq p\in \Pr \mathbb Z$ is such that $\gcd (p,d)=1$ and there is some $\mathfrak q\in R$ satisfying $N(\mathfrak q)=p$. Also assume: {\sf (1)} $\ord _p(N(c_n))=0$; {\sf (2)} $\gcd (n,\ord _p(c_0))=1$; {\sf (3)} $(n-j)\ord _p(c_0) \leq n\ord _p(c_j)$, for every $1\leq j\leq n-1$. Then for $n\leq 6$ the polynomial $f$ is irreducible over $R$.
\end{cor}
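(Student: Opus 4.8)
The plan is to reduce the statement to Theorem \ref{Orders-thm1} by producing a prime of $R$ at which the hypotheses (C1)--(C3) of that theorem hold. The two candidates are $\mathfrak q$ and its conjugate $\mathfrak q^{\ast}$. Since an order in a quadratic field is stable under conjugation, $\mathfrak q^{\ast}\in R$, and $N(\mathfrak q^{\ast})=N(\mathfrak q)=p$; hence by Lemma \ref{Orders-lem1} both $\mathfrak q,\mathfrak q^{\ast}\in\Pr R$. First I would record that $p=\mathfrak q\mathfrak q^{\ast}$ and that, $R$ being Noetherian, each of $\ord_{\mathfrak q},\ord_{\mathfrak q^{\ast}}$ is a Dumas valuation of the second kind by Corollary \ref{corU2}, while $\ord_p$ on $R$ is well defined by Lemma \ref{lemU1}. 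The whole argument rests on comparing these three order functions, together with the remark that conjugation intertwines the first two: $\ord_{\mathfrak q}(c^{\ast})=\ord_{\mathfrak q^{\ast}}(c)$ for all $c\in R$.

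The key structural input --- and the step I expect to require the hypotheses $p\neq 2$ and $\gcd(p,d)=1$ --- is that $\mathfrak q$ and $\mathfrak q^{\ast}$ are non-associate primes, i.e. $\mathfrak q\notdiv\mathfrak q^{\ast}$. Writing $\mathfrak q=a+b\sqrt d$, set $T=\operatorname{Tr}(\mathfrak q)=2a\in\mathbb Z$ and $\beta=2b\in\mathbb Z$, so that $T^2-\beta^2 d=4N(\mathfrak q)=4p$. Since $\mathfrak q^{\ast}=T-\mathfrak q$, one checks $\mathfrak q\mid\mathfrak q^{\ast}$ iff $\mathfrak q\mid T$ iff $p\mid T$ (for $T\in\mathbb Z$, taking norms gives $p\mid T^2\Leftrightarrow p\mid T$). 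But $p\mid T$ yields $p\mid\beta^2 d=T^2-4p$, and then $p\neq2$, $\gcd(p,d)=1$ force $p\mid\beta$, whence $p^2\mid T^2-\beta^2 d=4p$, i.e. $p\mid 4$ --- impossible. Granting this coprimality and the multiplicativity of $\ord_{\mathfrak q},\ord_{\mathfrak q^{\ast}}$, I would establish the two comparison identities, valid for every $c\in R^{\times}$,
\[
\ord_p(N(c))=\ord_{\mathfrak q}(c)+\ord_{\mathfrak q^{\ast}}(c),
\qquad
\ord_p(c)=\min\{\ord_{\mathfrak q}(c),\ord_{\mathfrak q^{\ast}}(c)\}.
\]
The first follows from $N(c)=cc^{\ast}$ and $\ord_{\mathfrak q}(z)=\ord_p(z)$ for $z\in\mathbb Z$; the second from $\ord_{\mathfrak q}(p)=\ord_{\mathfrak q^{\ast}}(p)=1$ and $\ord_{\mathfrak q^{\ast}}(\mathfrak q)=0$, by dividing $c$ first by $\mathfrak q^{\min}$ and then by $(\mathfrak q^{\ast})^{\min}$.

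Finally I would select the prime to feed into Theorem \ref{Orders-thm1}. By the second identity, $\ord_p(c_0)=\min\{\ord_{\mathfrak q}(c_0),\ord_{\mathfrak q^{\ast}}(c_0)\}$; after interchanging $\mathfrak q\leftrightarrow\mathfrak q^{\ast}$ if necessary, assume the minimum is attained at $\mathfrak q$, so $\ord_{\mathfrak q}(c_0)=\ord_p(c_0)$. Then (C2) is immediate from hypothesis (2), and (C3) follows from the chain $(n-j)\ord_{\mathfrak q}(c_0)=(n-j)\ord_p(c_0)\le n\,\ord_p(c_j)\le n\,\ord_{\mathfrak q}(c_j)$, where the last step uses $\ord_p(c_j)=\min\{\ord_{\mathfrak q}(c_j),\ord_{\mathfrak q^{\ast}}(c_j)\}\le\ord_{\mathfrak q}(c_j)$. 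Condition (C1) drops out of the first identity: $0=\ord_p(N(c_n))=\ord_{\mathfrak q}(c_n)+\ord_{\mathfrak q^{\ast}}(c_n)$ with both summands nonnegative forces $\ord_{\mathfrak q}(c_n)=0$. Thus $\mathfrak q\in\Pr R$ (or $\mathfrak q^{\ast}$) satisfies all hypotheses of Theorem \ref{Orders-thm1}, which for $n\le 6$ gives the irreducibility of $f$ over $R$. The only genuine subtlety is the coprimality step; the rest is bookkeeping with the two order identities.
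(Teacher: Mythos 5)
Your proposal is correct, and its overall architecture coincides with the paper's: both reduce to Theorem \ref{Orders-thm1} by first showing $\mathfrak q\notdiv \mathfrak q^{\ast}$ and then feeding in whichever of $\mathfrak q,\mathfrak q^{\ast}$ realizes $\ord _p(c_0)$ as its order of $c_0$. The paper phrases the final dichotomy slightly differently (either $\ord _{\mathfrak q}(c_0)=\ell$, or $\mathfrak q^{\ell +1}\mid c_0$ forces $\ord _{\mathfrak q^{\ast}}(c_0)=\ell$); your identities $\ord _p(c)=\min \{ \ord _{\mathfrak q}(c),\ord _{\mathfrak q^{\ast}}(c)\}$ and $\ord _p(N(c))=\ord _{\mathfrak q}(c)+\ord _{\mathfrak q^{\ast}}(c)$ are an equivalent but more systematic packaging, and the second one neatly reproduces the check of {\sf (C1)} that the paper delegates to the proof of Theorem \ref{Orders-thm2}. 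Where you genuinely diverge is the key coprimality step. The paper proves $\mathfrak q\notdiv \mathfrak q^{\ast}$ by writing out the unit equation $\mathfrak qu=\mathfrak q^{\ast}$ in coordinates, splitting into the cases $d\not\equiv 1$ and $d\equiv 1\pmod 4$, and applying Cramer's rule to the resulting linear systems to force $p$ to divide both coordinates of $\mathfrak q$, contradicting $N(\mathfrak q)=p$. Your argument --- $\mathfrak q^{\ast}=\operatorname{Tr}(\mathfrak q)-\mathfrak q$, so $\mathfrak q\mid \mathfrak q^{\ast}$ forces $p\mid T$ via norms, and then $T^2-\beta ^2d=4p$ with $\gcd (p,d)=1$ gives $p\mid \beta$ and hence $p\mid 4$ --- is shorter, uniform in $d\bmod 4$, and makes transparent exactly where the hypotheses $p\neq 2$ and $\gcd (p,d)=1$ enter; it is a genuine gain in economy over the paper's computation. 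In a final write-up you should spell out the small facts underlying your two identities: $\ord _{\mathfrak q}(z)=\ord _p(z)$ for $z\in\mathbb Z$ uses $\ord _{\mathfrak q}(p)=1$ (i.e.\ the coprimality just proved), the norm argument showing $\mathfrak q\notdiv m$ whenever $m\in\mathbb Z$ and $p\notdiv m$, and $R\cap \mathbb Q=\mathbb Z$ so that the $p$-order of an integer is the same computed in $\mathbb Z$ or in $R$; all of this is routine and your sketch already indicates it.
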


\begin{proof}
As an auxiliary fact observe that there is no unit $u$ such that $\mathfrak qu=\mathfrak q^{\ast}$; i.e., $\mathfrak q$ does not divide $\mathfrak q^{\ast}$. First consider the easier case $d\not\equiv 1\pmod{4}$. Then suppose to the contrary and let us write $u=s+t\sqrt{d}$ and $\mathfrak q=\alpha +\beta \sqrt{d}$. Then we would have that $\alpha s+\beta dt=\alpha$ and $\beta s+\alpha t=-\beta$. Having in mind the Cramer's rule, define
$$
\Delta _0= \begin{vmatrix} \alpha & \beta d \\ \beta & \alpha \end{vmatrix}=N(\mathfrak q)=p \quad \text{ and } \quad \Delta _1= \begin{vmatrix} \alpha & \beta d \\ -\beta & \alpha \end{vmatrix} =2\alpha ^2 -p.
$$
As we have both $p\neq 2$ and $s=\Delta _1/\Delta _0\in\mathbb Z$, it is immediate that necessarily $p|\alpha$. Hence, using the fact that $\gcd (p,d)=1$, it follows that $p|\beta$ as well. But then $p^2|N(\mathfrak q)$; which cannot be. Now let $d\equiv 1\pmod{4}$ and then put $\omega =(1+\sqrt{d})/2$. Again suppose that for some $u=s+t\omega \in R^{\ast}$ and $\mathfrak q=\alpha +\beta \omega$ we have $\mathfrak qu=\mathfrak q^{\ast}$. It is immediate that then
$$
\alpha s+\beta t(d-1)/4= \alpha +\beta \quad \text{ and } \quad \beta s +(\alpha +\beta )t= -\beta .
$$
Here define
$$
\Delta _0= \begin{vmatrix} \alpha & \beta (d-1)/4 \\ \beta & \alpha +\beta \end{vmatrix} , \qquad \Delta _1= \begin{vmatrix} \alpha +\beta & \beta (d-1)/4 \\ -\beta & \alpha +\beta \end{vmatrix} , \qquad \Delta _2= \begin{vmatrix} \alpha & \alpha +\beta \\ \beta & -\beta \end{vmatrix}.
$$
We compute that
\begin{equation}
\label{Orders-cor1-label1}
\Delta _0=\alpha ^2 +\alpha \beta -\beta ^2\Bigl( \frac{d-1}{4}\Bigr) =N(\mathfrak q)=p,
\end{equation}
and also $\Delta _1=2\alpha ^2 +3\alpha \beta +\beta ^2-p$ and $\Delta _2=-\beta (\beta +2\alpha )$. As we have that $t=\Delta _2/\Delta _0\in\mathbb Z$ it follows that $p|\beta$ or $p|\beta +2\alpha$. In the former case by the above equality (\ref{Orders-cor1-label1}) it is clear that $p|\alpha$ as well. And so again $p^2|N(\mathfrak q)$, which is impossible. Next assume that $p$ does not divide $\beta$, but does $\beta +2\alpha$. And then rewrite the equality (\ref{Orders-cor1-label1}) as
$$
\alpha ^2 -\alpha \beta +\beta (\beta +2\alpha ) -\beta ^2\Bigl( \frac{d-1}{4}+1\Bigr) =p.
$$
Hence it is clear that $p$ divides
$$
4\alpha ^2 -4\alpha \beta -\beta ^2d-3\beta ^2= 2(\Delta _1+p) -5(2\alpha\beta +\beta ^2 ) -\beta ^2d,
$$
and therefore that $p|\beta$ as well; a contradiction.

Now define $\ell =\ord _p(c_0)$. As we have $\mathfrak q\mathfrak q^{\ast}=p$ it follows that both $\mathfrak q^{\ell}| c_0$ and $(\mathfrak q^{\ast})^{\ell} |c_0$. If $\ord _{\mathfrak q}(c_0)=\ell$, then using the fact that clearly $\ord _{\mathfrak q}(c_j)\geq \ord _p(c_j)$ for every $1\leq j\leq n-1$, the claim of the corollary follows by Theorem \ref{Orders-thm1} for the prime $\mathfrak p=\mathfrak q$. (See the proof of theorem which follows for more details.) The other possibility is that $\mathfrak q^{\ell +1} |c_0$. But then, using that $\mathfrak q\notdiv \mathfrak q^{\ast}$, it is easy to conclude that necessarily $\ord _{\mathfrak q^{\ast}}(c_0)=\ell$. Thus we can again apply Theorem \ref{Orders-thm1}, this time for the prime $\mathfrak p=\mathfrak q^{\ast}$.
\end{proof}

Let $f$ be given as in (\ref{Orders-ex1-label1}). If $d\in \mathbb N$ is square-free and we have some $p$ in $\Pi =\{ \pm 23, \pm 29, \pm 31\}$ and $\mathfrak p$ in $R=\mathbb Z[\sqrt{d}]$ such that $N(\mathfrak p)=p$, then for $n\leq 6$ the polynomial $f$ is irreducible over $R$. For example this is so for any $2\leq d\leq 10$. Namely we have the following: $N(5+\sqrt{2})=23$,
$N(2+3\sqrt{3})=-23$, $N(6+\sqrt{5})=31$, $N(1+2\sqrt{6})=-23$ and $N(6+\sqrt{7})=29$. Further let $\mathcal S$ be the set of all square-free positive integers $d<100$ satisfying $d\equiv 1\pmod{4}$. This set has $19$ elements, where the corresponding orders $R=\mathbb Z[\sqrt{d}]$ are not UFDs. For $12$ of these values $d$, when $d$ is not in the subset $\mathcal S_1= \{ 17,21, 29,37, 61,85, 89\}$ of $\mathcal S$, we have that $f$ is irreducible over $R$. Namely we have the following: $N(6+\sqrt{13})=23$, $N(8+\sqrt{33})=31$, $N(8+\sqrt{41})=23$, $N(36+5\sqrt{53})=29$, $N(22+3\sqrt{57})=-29$,   $N(6+\sqrt{65})=-29$, $N(10+\sqrt{69})=31$, $N(60+7\sqrt{73})=23$, $N(10+\sqrt{77})=23$, $N(8+\sqrt{93})=-29$ and $N(39+4\sqrt{97})=-31$. Also note that for example there are no $p\in\Pi$ and $x\in \mathbb Z[\sqrt{17}]$ so that $N(x)=p$; and similarly for other values $d\in\mathcal S_1$. Thus there is no help here of our corollary.

However the previous corollary can be further generalized. Here note that if our Conjecture is true, then the following theorem will take a form which presents a significant generalization of the classical Eisenstein-Dumas irreducibility criterion. Besides in many interesting situations we will have a polynomial $f\in\mathbb Z[X]$ and a $G_{\mathbb K}$-invariant order $R$ in a normal algebraic number field $\mathbb K$. And then it will be worth to know whether $f$ is irreducible over $R$. Having such setting this theorem might be of great help.

\begin{thm}
\label{Orders-thm2}
Let $\mathbb K$ be a normal algebraic number field of degree $m$ and $R$ be an order in $\mathbb K$. Suppose that $R$ is $G_{\mathbb K}$-invariant. Suppose $p\in \Pr \mathbb Z$ is such that there is some $\mathfrak p\in R$ whose norm equals $p$; and so $\mathfrak p\in\Pr R$. Let $f=c_nX^n+\cdots +c_1X+c_0$ be a polynomial in $R[X]$. Further suppose that $\gcd (n,m)=1$ and also for $\ell =\ord _p(c_0)$ the following three conditions: {\sf (1)} $\ord _p(N(c_n))= 0= \ord _p(N(c_0/p^{\ell}))$; {\sf (2)} $\gcd (n,\ell )=1$; {\sf (3)} $(n-j)\ell \leq n\ord _p(c_j)$, for every $1\leq j\leq n-1$. Then for $n\leq 6$ the polynomial $f$ is irreducible over $R$.
\end{thm}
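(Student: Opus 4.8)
The plan is to deduce this theorem from Theorem \ref{Orders-thm1} by verifying its three hypotheses (C1)--(C3) for the single prime $\mathfrak{p}\in\Pr R$ itself; the whole point will be to convert the rational data $\ord_p(\cdot)$ and $\ord_p(N(\cdot))$ into the $\mathfrak{p}$-adic data $\ord_{\mathfrak p}(\cdot)$ that Theorem \ref{Orders-thm1} requires. Throughout I use that an order $R$ is Noetherian, so that $\ord_{\mathfrak p}$ is a genuine Dumas valuation of the second kind (Corollary \ref{corU2}), and I set $e=\ord_{\mathfrak p}(p)$ and $\ell=\ord_p(c_0)$.

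First I would record the basic norm trick. Since $\mathbb{K}$ is normal and $R$ is $G_{\mathbb{K}}$-invariant, every conjugate $\sigma(\mathfrak{p})$ lies in $R$ and has $N(\sigma(\mathfrak{p}))=N(\mathfrak{p})=p$, hence is prime by Lemma \ref{Orders-lem1}; moreover $p=\prod_{\sigma\in G_{\mathbb{K}}}\sigma(\mathfrak{p})$. Consequently, if $\mathfrak{p}\mid x$ then $N(x)=p\cdot N(x/\mathfrak{p})$, so $p\mid N(x)$. Applying the contrapositive to condition (1) gives $\mathfrak{p}\nmid c_n$, i.e. $\ord_{\mathfrak p}(c_n)=0$, which is (C1). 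Writing $c_0=p^{\ell}c_0'$ (legitimate since $p^{\ell}\mid c_0$), condition (1) says $p\nmid N(c_0')$, whence $\mathfrak{p}\nmid c_0'$ by the same trick; therefore $\ord_{\mathfrak p}(c_0)=\ell\,\ord_{\mathfrak p}(p)+\ord_{\mathfrak p}(c_0')=e\ell$. For the intermediate coefficients, $p^{\ord_p(c_j)}\mid c_j$ forces $\ord_{\mathfrak p}(c_j)\geq e\,\ord_p(c_j)$, so multiplying (3) by $e$ yields $(n-j)\ord_{\mathfrak p}(c_0)=e(n-j)\ell\leq n\,e\,\ord_p(c_j)\leq n\,\ord_{\mathfrak p}(c_j)$, which is (C3).

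The crux is then (C2): I must show $\gcd(n,\ord_{\mathfrak p}(c_0))=\gcd(n,e\ell)=1$. Since $\gcd(n,\ell)=1$ is assumed, it suffices to prove $\gcd(n,e)=1$, and for this I would identify $e$ group-theoretically. Let $H=\{\sigma\in G_{\mathbb{K}}:\sigma(\mathfrak{p})\text{ is an associate of }\mathfrak{p}\}$. Because each $\sigma$ is a ring automorphism of $R$ and hence preserves units, $H$ is a subgroup of $G_{\mathbb{K}}$. For $\sigma\in H$ one has $\ord_{\mathfrak p}(\sigma(\mathfrak{p}))=1$, while for $\sigma\notin H$ the prime $\sigma(\mathfrak{p})$ is irreducible and non-associate to $\mathfrak{p}$, so $\mathfrak{p}\nmid\sigma(\mathfrak{p})$ and $\ord_{\mathfrak p}(\sigma(\mathfrak{p}))=0$. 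Summing over the factorization $p=\prod_{\sigma}\sigma(\mathfrak{p})$ gives $e=\ord_{\mathfrak p}(p)=|H|$, which divides $m=|G_{\mathbb{K}}|$ by Lagrange. As $\gcd(n,m)=1$, also $\gcd(n,e)=1$, completing (C2). With (C1)--(C3) verified for $\mathfrak{p}$ and $n\leq 6$, Theorem \ref{Orders-thm1} then gives that $f$ is irreducible over $R$.

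The step I expect to be the main obstacle is the identification $\ord_{\mathfrak p}(p)=|H|$: it is exactly here that both normality and $G_{\mathbb{K}}$-invariance of $R$ are essential, since they guarantee that the conjugates $\sigma(\mathfrak{p})$ are primes of $R$ whose product is $p$, and that the ``associate'' relation organizes them into the cosets of a subgroup whose order divides $m$. Everything else is the routine conversion between $p$-adic and $\mathfrak{p}$-adic orders already rehearsed in the proof of Corollary \ref{Orders-cor1}.
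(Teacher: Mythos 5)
Your proposal is correct and follows essentially the same route as the paper: both reduce to Theorem \ref{Orders-thm1} by verifying {\sf (C1)}--{\sf (C3)} for $\mathfrak p$, using the factorization $p=\prod_{\sigma\in G_{\mathbb K}}\sigma(\mathfrak p)$ into primes of $R$ (via Lemma \ref{Orders-lem1} and $G_{\mathbb K}$-invariance) to get $\ord_{\mathfrak p}(c_n)=0$, $\ord_{\mathfrak p}(c_0)=e\ell$, and $\ord_{\mathfrak p}(c_j)\geq e\,\ord_p(c_j)$. The only deviation is cosmetic but welcome: where the paper partitions $\mathcal A(\mathfrak p)$ into associate classes of equal cardinality $e$ with $ke=m$, you identify $e=\ord_{\mathfrak p}(p)=|H|$ for the subgroup $H=\{\sigma : \sigma(\mathfrak p)\sim\mathfrak p\}$ and invoke Lagrange, which counts automorphisms rather than distinct conjugates and so handles cleanly the possibility that conjugates of $\mathfrak p$ coincide.
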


\begin{proof}
Recall that two elements $a,b\in R$ are associated, which we denote by $a\sim b$, if $a=ub$ for some $u\in R^*$. For any $x\in R$ define $\mathcal A(x)=\{ \sigma (x) \mid \sigma \in G_{\mathbb K}\}$. As $R$ is $G_{\mathbb K}$-invariant we have that $\mathcal A(x)\subseteq R$. Clearly, $\sim$ is an equivalence relation on $\mathcal A(x)$. The elements of the corresponding quotient set $\mathcal A(x)/\!\!\sim$ will be denoted by $[y]$, for $y\in \mathcal A(x)$. It is easy to see that every two classes have the same cardinality; i.e., that
$$
\card \, [y] =\card \, [\sigma (y)], \quad \text{ for } \sigma \in G_{\mathbb K}.
$$
Thus in particular for $\mathfrak p\in R$ there is some $e\in\mathbb N$ so that $\card \, [\sigma (\mathfrak p)] =e$, for every $\sigma\in G_{\mathbb K}$. If we define $k$ to be the cardinality of $\mathcal A(\mathfrak p)/\!\!\sim$, then $ke=m$.

Now we have to show that the conditions {\sf (C1)}\! -- {\sf (C3)} of Theorem \ref{Orders-thm1} do hold. Concerning the first one suppose that $\ord _{\mathfrak p}(c_n)\geq 1$. Then there is some $x_n\in R$ so that $c_n=\mathfrak px_n$ and therefore $N(c_n)=pN(x_n)$, which contradicts to the first equality in {\sf (1)}.

Next we prove that {\sf (C2)} holds. Here $c_0=p^{\ell}x_0$, for some $x_0\in R$ such that $p\notdiv x_0$. We claim that $\mathfrak p\notdiv x_0$ as well. Namely suppose to the contrary; i.e., that $x_0=\mathfrak pw_0$ for some $w_0\in R$. But then we deduce that $p$ divides $N(x_0)$, which contradicts to the second equality in {\sf (1)}. Now choose $k$ automorphisms $\tau _i\in G_{\mathbb K}$ so that
$$
\{ [\tau _i(\mathfrak p)] \mid i=1,\ldots ,k\} =\mathcal A(\mathfrak p)/\!\!\sim ,
$$
where we can take $\tau _1$ to be the identity. Then there is some $v_0\in R^*$ so that
$$
c_0 =\Bigl( \prod _{\sigma \in G_{\mathbb K}} \sigma (\mathfrak p)^{\ell} \Bigr) x_0= \prod _{i=1}^k \bigl( \tau _i (\mathfrak p)^e\bigr) ^{\ell} x_0v_0= \mathfrak p^{e\ell} \Omega ,
$$
where we put $\Omega = \prod _{i=2}^k \bigl( \tau _i (\mathfrak p)^e\bigr) ^{\ell} x_0v_0$. Observe that by the $G_{\mathbb K}$-invariance of $R$ we have that $\Omega \in R$. As a conclusion we have that
\begin{equation}
\label{Orders-thm2-label1}
\ord _{\mathfrak p}(c_0)= e\ell .
\end{equation}
Now assume that there is some $\pi \in \Pr \mathbb Z$ such that $\pi$ divides $\gcd (n,\ord _{\mathfrak p}(c_0))$. We distinguish two possibilities. If $\pi |e$, then it follows that $\pi$ divides $\gcd (n,m)=1$; which is impossible. The second possibility is that $\pi |\ell$, which is again impossible by the condition {\sf (2)}. Thus we have proved that {\sf (C2)} holds.

In order to show {\sf (C3)} define $\ell _j=\ord _p(c_j)$, for $1\leq j\leq n-1$. So we have that $c_j=p^{\ell _j}y_j$, for some $y_j\in R$. Analogously as before we conclude that in particular $\mathfrak p^{e\ell _j}$ divides $c_j$, in $R$; i.e., that
\begin{equation}
\label{Orders-thm2-label2}
\ord _{\mathfrak p}(c_j)\geq  e\ell _j.
\end{equation}
Now the condition {\sf (3)} of the theorem reads as $(n-j)\ell \leq n\ell _j$, which is by the equality (\ref{Orders-thm2-label1}) equivalent to the inequality
$$
(n-j)\ord _{\mathfrak p}(c_0) \leq ne\ell _j.
$$
But then by (\ref{Orders-thm2-label2}) it is immediate that the condition {\sf (C3)} does hold.
\end{proof}

Every quadratic field $\mathbb K=\mathbb Q(\sqrt{d})$ is normal. Also it is well known that for $d\not\equiv 1\pmod{4}$ every order in $\mathbb K$ is of the form $R_k^{(1)}=\{ x+ky\sqrt{d} \mid x,y\in\mathbb Z\}$, while for $d\equiv 1\pmod{4}$ it is of the form $R_k^{(2)}=\{ x+ky\omega \mid x,y\in\mathbb Z\}$, where $\omega =(1+\sqrt{d})/2$. Thus it is clear that every order in $\mathbb K$ is $G_{\mathbb K}$-invariant. Further for any normal algebraic number field $\mathbb K$ its ring of integers $\mathcal O_{\mathbb K}$ is obviously $G_{\mathbb K}$-invariant. Therefore the following corollary is now obvious. Observe that in particular if the class number $h_{\mathbb K}$ of the field under consideration is greater than $1$, and so $\mathcal O_{\mathbb K}$ is not a UFD, then this corollary presents a novel useful result. Of course there are infinitely many such number fields. For examples of three normal cubic fields $\mathbb K$ having $h_{\mathbb K}\geq 2$ see \cite[App$.$ B, Table B.4]{C}; these have the discriminants $1957$, $2597$ and $2777$.

\begin{cor}
\label{Orders-cor2}
Let $\mathbb K$ be a normal algebraic number field of degree $m$. Suppose $p\in \Pr \mathbb Z$ is such that there is some $\mathfrak p\in \mathcal O_{\mathbb K}$ satisfying $N(\mathfrak p)=p$. Let $f=c_nX^n+\cdots +c_1X+c_0$ be a polynomial in $\mathcal O_{\mathbb K}[X]$. Further suppose that $\gcd (n,m)=1$ and for $\ell =\ord _p(c_0)$ the conditions {\sf (1)}\! -- {\sf (3)} of the previous theorem do hold. Then for $n\leq 6$ the polynomial $f$ is irreducible over $\mathcal O_{\mathbb K}$.
\end{cor}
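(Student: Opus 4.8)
The plan is to deduce this statement directly from Theorem \ref{Orders-thm2} by specializing the order $R$ there to the maximal order $R=\mathcal O_{\mathbb K}$. Indeed, Theorem \ref{Orders-thm2} already applies to \emph{any} order in the normal field $\mathbb K$, provided that order is $G_{\mathbb K}$-invariant and contains an element of norm $p$. Thus every numerical hypothesis of the corollary — the polynomial $f\in\mathcal O_{\mathbb K}[X]$, the coprimality $\gcd(n,m)=1$, and the three conditions {\sf (1)}--{\sf (3)} governing $\ell=\ord_p(c_0)$ together with the norms and $p$-orders of the coefficients — is literally a hypothesis of the theorem with $R=\mathcal O_{\mathbb K}$. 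The only real task is to check that the particular choice $R=\mathcal O_{\mathbb K}$ satisfies the two structural assumptions of Theorem \ref{Orders-thm2}.

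First I would verify that $\mathcal O_{\mathbb K}$ is $G_{\mathbb K}$-invariant. This is the standard fact that the Galois action permutes the ring of integers: for each $\sigma\in G_{\mathbb K}$ and each $x\in\mathcal O_{\mathbb K}$, the element $\sigma(x)$ shares the monic integer minimal polynomial of $x$ over $\mathbb Q$, hence is again an algebraic integer. This gives $\sigma(\mathcal O_{\mathbb K})\subseteq\mathcal O_{\mathbb K}$, and applying the same reasoning to $\sigma^{-1}$ yields equality. Since $\mathcal O_{\mathbb K}$ is itself an order (the maximal one), the assumptions ``$R$ is an order in $\mathbb K$'' and ``$R$ is $G_{\mathbb K}$-invariant'' both hold; this $G_{\mathbb K}$-invariance is also exactly what was recorded for the maximal order in the discussion preceding the corollary.

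Next I would observe that the hypothesised $\mathfrak p\in\mathcal O_{\mathbb K}$ with $N(\mathfrak p)=p$ is in fact a prime of $R=\mathcal O_{\mathbb K}$, which is precisely Lemma \ref{Orders-lem1} applied to the order $\mathcal O_{\mathbb K}$ (the classical statement for Dedekind domains would equally suffice here). With these two verifications in place, all hypotheses of Theorem \ref{Orders-thm2} are met, and the irreducibility of $f$ over $\mathcal O_{\mathbb K}$ for $n\leq 6$ follows at once by invoking that theorem. There is no genuine obstacle to overcome: the entire content of the proof is the $G_{\mathbb K}$-invariance of $\mathcal O_{\mathbb K}$, which is standard, and this is exactly why the corollary may be regarded as immediate once Theorem \ref{Orders-thm2} has been established.
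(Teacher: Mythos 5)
Your proposal is correct and matches the paper's own (implicit) proof: the paper simply notes that $\mathcal O_{\mathbb K}$ is $G_{\mathbb K}$-invariant for any normal field $\mathbb K$ and then declares the corollary an immediate consequence of Theorem \ref{Orders-thm2}, which is exactly your specialization $R=\mathcal O_{\mathbb K}$. Your explicit check of the invariance (Galois conjugates of algebraic integers are algebraic integers) and your appeal to Lemma \ref{Orders-lem1} for the primality of $\mathfrak p$ only spell out details the paper leaves to the reader.
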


Concerning Theorem \ref{Orders-thm2} and its corollary stated above observe the following. Suppose $\mathbb K$ is a normal algebraic number field of degree $\geq 2$ and $R$ is a $G_{\mathbb K}$-invariant order. It is possible to have a polynomial $f\in R[X]$ so that some of the conditions of 
Theorem \ref{Orders-thm2} do not hold; and therefore we cannot apply the theorem. But we can have a larger order $R\subset \widetilde{R}\subset \mathcal O_{\mathbb K}$ so that it is possible to conclude via our theorem that $f$ is irreducible over $\widetilde{R}$. Of course then $f$ is irreducible over $R$ as well. A simple but quite instructive example for that is the following one.

\begin{ex}
\label{Orders-ex1A}
Consider the quadratic field $\mathbb K=\mathbb Q(\sqrt{11})$ and, using the notation of the paragraph before the previous corollary, the order $R=R^{(1)}_{5^3\cdot 17}$ in $\mathbb K$. Also define the number 
$$
c_0= 999250 + 1846625\sqrt{11} = 5^3(2\cdot 7\cdot 571 + 11\cdot 17\cdot 79 \sqrt{11})
$$
of $R$ and then the cubic polynomial $f=X^3-c_0\in R[X]$. First note that a unique positive $p\in \Pr \mathbb Z$ satisfying $\ord _p(c_0)\in\mathbb N$, when $c_0$ is understood as an element of $\mathcal O_{\mathbb K}$, is $p=5$. Namely for such $p$ we would have some $w\in \mathcal O_{\mathbb K}$
so that $c_0=pw$. And then $N(c_0)= p^2N(w)$. It is easy to compute that $N(c_0)= -5^6\cdot 1327^3$, where $1327\in \Pr \mathbb Z$. Thus it is clear that necessarily $p\in \{ 5,1327\}$, where it is easy to rule out the possibility $p=1327$. 

Next observe that if we consider $c_0$ as an element of $\mathcal O_{\mathbb K}$, then $\ell = \ord _5(c_0)=3$ and so we cannot apply our theorem. More precisely, $f$ is reducible over $\mathcal O_{\mathbb K}$ as now $f$ can be written as $X^3-z_0^3$, for $z_0= 10+55\sqrt{11}$.

Further consider $c_0$ as an element of $R$. It is immediate that now we have that $\ord _5(c_0)=0$ and so again we cannot apply the theorem. A simple trick is to take $R\subset \widetilde{R}= R^{(1)}_{5^2\cdot 17}$. It is an easy exercise to check that here $\ord _5(c_0)=1$. And then we can conclude via our theorem thet $f$ is irreducible over $\widetilde{R}$; and so $f$ is irreducible over $R$ as well.
\end{ex}

Suppose $f$ is a polynomial defined over some order $R$ in a number field $\mathbb K$. In order to apply our results on reducibility of $f$ over $R$ one has to have a nontrivial insight into the set of primes $\Pr R$. But in general this might be a notoriously difficult problem. In particular this involves a problem to understand for which primes $p\in \Pr \mathbb Z$ we can find some (prime) $\mathfrak p\in \Pr R$ so that for the corresponding norm $N=N_{\mathbb K|\mathbb Q}$ we have $N(\mathfrak p)=p$. Finally, related to the last theorem it is reasonable to ask the following.
\begin{question}
{\em Given a normal algebraic number field $\mathbb K$, can we obtain some new (non-maximal) orders in $\mathbb K$ that are $G_{\mathbb K}$-invariant?}
\end{question}

Here we present a nontrivial observation about this question which might be interesting in its own right.

\begin{prop}
\label{Orders-prop1}
Let $\mathbb K$ be a normal algebraic number field of degree $m$. Assume that $\mathbb K=\mathbb Q(\alpha )$ for some $\alpha \in\mathbb K$ and let $\mu$ be the minimal polynomial of $\alpha$; where we can assume that $\mu \in \mathbb Z[X]$. Let $\alpha =\alpha _1,\alpha _2,\ldots ,\alpha _m$ be all the roots of $\mu$. Define the sets $\Gamma _k= \{ k\alpha _1,\ldots ,k\alpha _m\}$, for $k\in\mathbb N$, and then the rings $R_k= \mathbb Z[\Gamma _k]$; i.e., $R_k$ is the subring of $\mathbb C$ generated by $\Gamma _k$ over $\mathbb Z$. Then every $R_k$ is a $G_{\mathbb K}$-invariant order in $\mathbb K$.
\end{prop}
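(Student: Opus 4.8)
The plan is to verify the three defining requirements of a $G_{\mathbb{K}}$-invariant order in turn: that $R_k$ is contained in $\mathcal{O}_{\mathbb{K}}$, that it is a full $\mathbb{Z}$-lattice in $\mathbb{K}$, and that it is stable under the Galois action. The structural input that makes everything work is the normality of $\mathbb{K}$: since $\mathbb{K}|\mathbb{Q}$ is normal and $\mu$ is the minimal polynomial of $\alpha\in\mathbb{K}$, the field $\mathbb{K}$ is a splitting field of $\mu$, so all the roots $\alpha_1,\ldots,\alpha_m$ in fact lie in $\mathbb{K}$. Consequently $\Gamma_k\subseteq\mathbb{K}$ and hence $R_k=\mathbb{Z}[\Gamma_k]\subseteq\mathbb{K}$. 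Moreover, as the statement permits, we may take $\mu$ monic with integer coefficients, so $\alpha$---and therefore each conjugate $\alpha_i$---is an algebraic integer; thus each $k\alpha_i\in\mathcal{O}_{\mathbb{K}}$ and $R_k\subseteq\mathcal{O}_{\mathbb{K}}$.

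Next I would show that $R_k$ is a full lattice. For finite generation as a $\mathbb{Z}$-module, I note that $R_k$ is a $\mathbb{Z}$-submodule of $\mathcal{O}_{\mathbb{K}}$, which is free of rank $m$ over the Noetherian ring $\mathbb{Z}$; every submodule of a finitely generated module over a Noetherian ring is finitely generated, so $R_k$ is finitely generated. For the spanning (full-rank) condition, I single out that $R_k$ contains the subring $\mathbb{Z}[k\alpha]$; since $\mathbb{Q}(k\alpha)=\mathbb{Q}(\alpha)=\mathbb{K}$, the element $k\alpha$ has degree $m$ over $\mathbb{Q}$, so $1,k\alpha,\ldots,(k\alpha)^{m-1}\in R_k$ form a $\mathbb{Q}$-basis of $\mathbb{K}$ and hence $\mathbb{Q}\cdot R_k=\mathbb{K}$. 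Together with the previous paragraph this shows that $R_k$ is an order.

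Finally, for $G_{\mathbb{K}}$-invariance I would use that each $\sigma\in G_{\mathbb{K}}$ fixes $\mathbb{Q}$ pointwise and therefore permutes the roots of $\mu$; that is, $\sigma$ induces a permutation of $\{\alpha_1,\ldots,\alpha_m\}$, hence of $\Gamma_k=\{k\alpha_1,\ldots,k\alpha_m\}$ as a set. Since $\sigma$ is a ring automorphism of $\mathbb{C}$ restricting to the identity on $\mathbb{Z}$, it carries the subring generated by a set onto the subring generated by the image set, so $\sigma(R_k)=\mathbb{Z}[\sigma(\Gamma_k)]=\mathbb{Z}[\Gamma_k]=R_k$.

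I do not expect a serious obstacle: the argument is essentially a bookkeeping check of the three conditions. The one point that genuinely requires the hypotheses---and where I would be careful---is the inclusion $\Gamma_k\subseteq\mathbb{K}$, which fails for non-normal $\mathbb{K}$; without it $R_k$ need not even be contained in $\mathbb{K}$. The only remaining subtlety is the full-rank assertion, which is precisely why the single generator $k\alpha$, rather than the whole set $\Gamma_k$, is the convenient element to exhibit.
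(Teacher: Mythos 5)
Your proof is correct, but it takes a genuinely different route from the paper's at the decisive step. On the Galois-invariance you and the paper do essentially the same thing: the paper packages the permutation-of-roots observation in a family $\mathcal S$ of all subrings of $\mathbb C$ containing $\mathbb Z\cup\Gamma _k$, which is just a heavier formulation of your remark that an automorphism carries the subring generated by a set onto the subring generated by the image set (one small inaccuracy to fix: $\sigma \in G_{\mathbb K}$ is an automorphism of $\mathbb K$, not of $\mathbb C$; since normality gives $\Gamma _k\subseteq \mathbb K$ and hence $R_k\subseteq \mathbb K$, applying $\sigma$ on $\mathbb K$ is all you need). The real divergence is in proving that $R_k$ is an order. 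The paper never invokes your Noetherian argument: it first shows $\mathbb Z[\alpha ]\subseteq R_1\subseteq \mathcal O_{\mathbb K}$ via elementary symmetric functions, so that $R_1$ is an order, then identifies $R_k$ explicitly as the $\mathbb Z$-module $M_k=\sum _{I\in\mathcal I(m)}\mathbb Z k^{|I|}\alpha _I$ spanned by the weighted monomials with all exponents $i_j<m$, checks that $M_k$ is a ring and that $R_k=M_k$, and finally proves by a rather long inductive filtration argument that the additive group $M_1/M_k$ is finite, i.e.\ that the index $(R_1:R_k)$ is finite. Your route --- finite generation because $R_k\subseteq \mathcal O_{\mathbb K}$ is a submodule of a finitely generated module over the Noetherian ring $\mathbb Z$, plus full rank because $\mathbb Z[k\alpha ]\subseteq R_k$ with $k\alpha$ an integral primitive element of degree $m$ --- is shorter, cleaner and entirely standard; it correctly uses the two hypotheses that matter (normality for $\Gamma _k\subseteq\mathbb K$, and $\mu$ monic in $\mathbb Z[X]$ for integrality). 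What the paper's computation buys in exchange is explicit structural information: a concrete additive generating set $R_k=M_k$ and the finiteness of the index $(R_1:R_k)$, which is exactly the kind of data relevant to the subsequent Remark \ref{Orders-rmk2} on whether the orders $R_k$ are pairwise distinct; your argument, while fully sufficient for the proposition as stated, yields none of that.
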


\begin{proof}
We will first show that
$$
\sigma (R_k)=R_k, \quad \text{ for every $\sigma \in G_{\mathbb K}$}.
$$
For that purpose observe that it is sufficient to show the inclusion $\sigma (R_k)\subseteq R_k$. Now define $\mathcal S$ to be the set of all subrings $S\leq \mathbb C$ which contain $\mathbb Z\cup \Gamma _k$. What we have to prove is the equality of sets
\begin{equation}
\label{Orders-prop1-label1}
\mathcal S= \{ \sigma (S) \mid S\in\mathcal S\} ,
\end{equation}
for every $\sigma$. Clearly $\sigma (S)$ is a subring of $\mathbb C$ containing $\mathbb Z$. And further we have that $\Gamma _k=\sigma (\Gamma _k)\subseteq \sigma (S)$. So it follows that $\sigma (S)\in\mathcal S$ for every $S\in\mathcal S$; i.e, we have the inclusion $\{ \sigma (S) \mid S\in\mathcal S\} \subseteq \mathcal S$. Hence it is immediate that (\ref{Orders-prop1-label1}) holds.

Now write
$$
\mu =X^m +a_{m-1}X^{m-1}+\cdots +a_1X+a_0, \qquad a_i\in\mathbb Z.
$$
Also define
$$
\mu _1 =\prod _{j=2}^m (X-\alpha _j) =X^{m-1}- \mathcal E _1X^{m-2}+\cdots +(-1)^{m-1} \mathcal E _{m-1},
$$
where $\{ \mathcal E _j\}$ are elementary symmetric functions in $\alpha _2,\ldots ,\alpha _m$. Thus we have that $\mu =(X-\alpha )\mu _1$ and hence that
$$
\alpha \mathcal E _{j-1} +\mathcal E _j = (-1)^j a_{m-j}, \quad \text{ for $j=1,\ldots ,m$},
$$
where we put $\mathcal E _0=1$ and $\mathcal E _m=0$. Obviously $\alpha \in R_1$. Further we have that $\alpha ^2 =\mathcal E _2 -\alpha a_{m-1} -a_{m-2}$ and therefore $\alpha ^2\in R_1$ as well. Similarly we see that every $\alpha ^j\in R_1$. Hence we conclude that $\mathbb Z[\alpha ]\subseteq R_1$. Next let $\preceq$ be the total ordering on $\mathbb N_0^m$, defined in the paragraph following Lemma \ref{ba1}; with there considered $\boldsymbol{\nu}\in \mathbb N_0^m$ taken to be $(1,\ldots ,1)$. For any $I=(i_1,\ldots ,i_m)$ define $\alpha _I=\alpha _1^{i_1}\cdots \alpha _m^{i_m}$, and then for fixed $z\in\mathbb Z$ the cyclic modules $\mathbb Zz\alpha _I$. Also define the set of multi-indices
$$
\mathcal I(m)=\{ I=(i_1,\ldots ,i_m) \mid i_j<m, \text{ for every $j$}\} ,
$$
and then the $\mathbb Z$-modules
$$
M_k=\sum _{I\in\mathcal I(m)}\mathbb Z k^{|I|} \alpha _I, \quad \text{ for $k\in \mathbb N$};
$$
where in particular for $\boldsymbol{0}=(0,\ldots ,0)$ we take $\alpha _{\boldsymbol{0}}=1$. Given any $I,J\in\mathcal I(m)$, define
$$
\Pi =(k^{|I|} \alpha _I)(k^{|J|} \alpha _J) = k^{|I+J|} \alpha _{I+J} = k^{|I+J|} \alpha _1^{i_1+j_1} \cdots \alpha _m^{i_m+j_m}.
$$
If for example $m\leq i_1+j_1<2m-1$, then
$$
\alpha _1^{i_1+j_1} = \alpha _1^{i_1+j_1-m}\Bigl( -\sum _{t=0}^{m-1} a_t\alpha _1^t\Bigr) = - \sum _{t=1}^m a_{m-t}\alpha _1^{i_1+j_1-t}.
$$
By performing further such computations we easily deduce that $\Pi \in M_k$. And therefore that every $M_k$ is a subring of $\mathbb C$.

Our next step is to show that moreover
\begin{equation}
\label{Orders-prop1-label2}
R_k=M_k, \quad \text{ for every $k\in\mathbb N$}.
\end{equation}
Indeed, as it is clear that the ring $M_k$ contains the set $\Gamma _k$, it follows that $R_K\subseteq M_k$. On the other hand for $I=(i_1,\ldots ,i_m)$ we have that
$$
k^{|I|} \alpha _I= (k\alpha _1)^{i_1} \cdots (k\alpha _m)^{i_m} \in R_k,
$$
and thus $M_k\subseteq R_k$ as well.

Our final step is to show thet every ring $R_k$ is an order in $\mathbb K$. For that purpose first note that by the inclusions $\mathbb Z[\alpha ] \subseteq R_1\subseteq \mathcal O_{\mathbb K}$ we know that $R_1$ is an order. For $k>1$ we have to show that the index $(R_1:R_k)$, of the additive group $R_k$ in the additive group $R_1$, is finite. More precisely, in our argument we will explore the fact (\ref{Orders-prop1-label2}). Now let $r\in \mathbb N$ be such that $r+1$ is the cardinality of the set $\mathcal I(m)$. Furthermore let us totally order this set of indices as
$$
\boldsymbol{0} =I_0 \preceq I_1\preceq \cdots \preceq I_r=\rho ,
$$
where we put $\rho =(m-1,\ldots ,m-1)$. And the define the $\mathbb Z$-modules $X_0=Y_0=\mathbb Z$, $X_1=\mathbb Z +\mathbb Z\alpha _m$,  $Y_1=\mathbb Z +\mathbb Zk\alpha _m$ and in general
$$
X_j=\sum _{i=0}^j \mathbb Z\alpha _{I_i} \quad \text{ and } \quad Y_j=\sum _{i=0}^j \mathbb Zk^{|I_i|}\alpha _{I_i}, \quad \text{ for $0\leq j\leq r$}.
$$
Observe that in particular $X_1/Y_1 =\{ t\alpha _m+Y_1 \mid 0\leq t<k\}$, and so $\card (X_1/Y_1)\leq k$. (Moreover the last inequality is in fact the equality.)  Further we note that
$$
X_2/(X_1+\mathbb Zk^{|I_2|}\alpha _{I_2}) \cong (X_2/Y_2) \bigl/(X_1+\mathbb Zk^{|I_2|}\alpha _{I_2})/Y_2.
$$
Denote the left-hand side of the above isomorphism by $A$ and observe that $\card (A)\leq k$ as well. Next define the module $W=\mathbb Z + \mathbb Zk^{|I_2|}\alpha _{I_2}$, and then also
$$
B=(X_1+\mathbb Zk^{|I_2|}\alpha _{I_2})/Y_2 =(W+\mathbb Z\alpha _{I_1})\bigl/ (W+\mathbb Zk^{|I_1|}\alpha _{I_1}).
$$
Obviously $B$ is a finite additive group too. Thus we have finite groups $A$ and $B$ such that $A\cong (X_2/Y_2)\bigl/ B$. And therefore $X_2/Y_2$ is finite as well. By inductive argument it follows that the groups $X_j/Y_j$ are finite, for every $j\in \{ 1,\ldots ,r-1\}$. Now note that
$$
M_1=X_{r-1} +\mathbb Z\alpha _{\rho} \quad \text{ and } \quad M_k=Y_{r-1} +\mathbb Zk^{|\rho |}\alpha _{\rho}.
$$
In particular let $0=x_0,x_1,\ldots ,x_s\in X_{r-1}$ be such that
$$
X_{r-1}/Y_{r-1} =\{ x_t+Y_{r-1} \mid 0\leq t\leq s\} .
$$
Any element $m\in M_1$ can be written as $m=x+b\alpha _{\rho}$, for some $x\in X_{r-1}$ and $b\in\mathbb Z$. At the same time there is some $v\in \{ 0,1,\ldots ,s\}$ so that $x-x_v\in Y_{r-1}$. And also there is a unique $j\in \{ 0,1,\ldots , k^{|\rho |}-1\}$ so that $b\equiv j\pmod{ k^{|\rho |}}$. Therefore we have that $(b-j)\alpha _{\rho}\in \mathbb Z  k^{|\rho |}\alpha _{\rho}$. And then
$$
x-x_v + (b-j) \alpha _{\rho} \in M_k,
$$
which further implies that
$$
m+M_k = x_v +j\alpha _{\rho} +M_k.
$$
As a conclusion we see that
$$
M_1/M_k =\{  x_v +j\alpha _{\rho} +M_k \mid v\in \{ 0,1,\ldots ,s\} \text{ and } 0\leq j<  k^{|\rho |} \} .
$$
Hence, $M_1/M_k$ is a finite additive group, as we had to show.
\end{proof}

\begin{rmk}
\label{Orders-rmk2}
Note that for $m=2$ and $d\not\equiv 1\pmod{4}$  our rings $R_k$ of the proposition are the before mentioned orders $R_k^{(1)}$ in $\mathbb K=\mathbb Q(\sqrt{d})$, while for $d\equiv 1\pmod{4}$ we have the corresponding orders $R_{2k}^{(2)}$. And obviously then for different values of $k$ we obtain the different orders. Related to that we believe that for the above considered orders $R_k$ we have $R_k\neq R_{\ell}$ if and only if $k\neq \ell$. But we do not know how to deal with this in a general setting.
\end{rmk}

We conclude the paper with one more result about the reducibility question for polynomials over orders in pure cubic fields. Note that these cubic fields will not be normal. Next we illustrate it via some examples.

\begin{prop}
\label{Orders-prop2}
Let $\delta \in\mathbb Z$ be cube-free and $\alpha$ be the real cube root of $\delta$. Consider the order $R=\mathbb Z[\alpha ]$ in the pure cubic field $\mathbb K=\mathbb Q(\alpha )$. Assume that $3\neq p\in \Pr \mathbb Z$ is such that $\gcd (p,\delta )=1$ and there is some $\mathfrak p\in \Pr R$ satisfying $N(\mathfrak p)=p$. Further suppose that if $\mathfrak p=a+b\alpha +c\alpha ^2$, for some integers $a,b$ and $c$, then we have the following:
\begin{equation}
\label{Orders-prop2-label1}
p\notdiv ab-c^2 \delta \quad \text{ or } \quad p\notdiv ac-b^2.
\end{equation}

Let $f=c_nX^n+\cdots c_1X+c_0$ be a polynomial in $R[X]$. Define $\ell =\ord _p(c_0)$ and assume that the same three conditions {\sf (1)}\!  -- {\sf (3)} of Theorem \textup{\ref{Orders-thm2}} do hold here as well. Then for $n\leq 6$ the polynomial $f$ is irreducible over $R$.
\end{prop}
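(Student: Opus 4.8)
The plan is to deduce the proposition from Theorem \ref{Orders-thm1} by verifying its three hypotheses {\sf (C1)}--{\sf (C3)} for the prime $\mathfrak p$, working entirely inside $R=\mathbb Z[\alpha]$. The embeddings of $\mathbb K$ are $\sigma_0=\mathrm{id}$ together with the two maps $\sigma_1,\sigma_2$ sending $\alpha$ to $\omega\alpha,\omega^2\alpha$, where $\omega$ is a primitive cube root of unity in the Galois closure. Writing $\mathfrak p=a+b\alpha+c\alpha^2$ and setting $\mathfrak q=\sigma_1(\mathfrak p)\sigma_2(\mathfrak p)$, a direct expansion using $\omega+\omega^2=-1$, $\omega^3=1$ and $\alpha^3=\delta$ yields
$$
\mathfrak q=(a^2-bc\delta)+(c^2\delta-ab)\alpha+(b^2-ac)\alpha^2\in R ,
$$
and since $N(\mathfrak p)=\sigma_0(\mathfrak p)\sigma_1(\mathfrak p)\sigma_2(\mathfrak p)=\mathfrak p\mathfrak q$ we obtain the factorization $p=\mathfrak p\mathfrak q$ in $R$. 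Because $\mathfrak p\in\Pr R$, the map $\ord_{\mathfrak p}$ is a Dumas valuation of the second kind by Corollary \ref{corU2}, so it is additive; the whole argument then reduces to pinning down $\ord_{\mathfrak p}(p)=1+\ord_{\mathfrak p}(\mathfrak q)$.

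The heart of the proof will be to show, via hypothesis (\ref{Orders-prop2-label1}), that $\mathfrak p\notdiv\mathfrak q$, i.e. that $\ord_{\mathfrak p}(p)=1$. Since $N(\mathfrak p)=p$, multiplication by $\mathfrak p$ on $R\cong\mathbb Z^3$ has determinant $p$, so $R/\mathfrak pR$ is a domain of order $p$, hence $R/\mathfrak pR\cong\mathbb F_p$. Let $\bar r\in\mathbb F_p$ be the image of $\alpha$; then $\bar r^3=\bar\delta\neq0$ (as $p\notdiv\delta$) and $a+b\bar r+c\bar r^2\equiv0\pmod p$ because $\mathfrak p\equiv0$. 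Now $\mathfrak p\mid\mathfrak q$ is equivalent to $\mathfrak q\in\mathfrak pR$, that is, to the vanishing of the image $\bar{\mathfrak q}$ of $\mathfrak q$ in $\mathbb F_p$. Reducing the displayed formula for $\mathfrak q$ and eliminating $a$ via $a\equiv-b\bar r-c\bar r^2$, a short computation collapses everything to
$$
\bar{\mathfrak q}\equiv 3\bar r^2\,(b^2+bc\bar r+c^2\bar r^2)\pmod p .
$$
As $p\neq3$ and $\bar r\neq0$, this vanishes precisely when $b^2+bc\bar r+c^2\bar r^2\equiv0$. The same elimination gives $ac-b^2\equiv-(b^2+bc\bar r+c^2\bar r^2)$ and $ab-c^2\delta\equiv-\bar r(b^2+bc\bar r+c^2\bar r^2)$, so, $\bar r$ being nonzero, each of the congruences $p\mid ac-b^2$ and $p\mid ab-c^2\delta$ is equivalent to $\bar{\mathfrak q}=0$. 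Thus hypothesis (\ref{Orders-prop2-label1}), which forbids both divisibilities simultaneously, forces $\bar{\mathfrak q}\neq0$, whence $\mathfrak p\notdiv\mathfrak q$ and $\ord_{\mathfrak p}(p)=1$.

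With $\ord_{\mathfrak p}(p)=1$ in hand, the three conditions follow routinely. For {\sf (C1)}: if $\mathfrak p\mid c_n$ then $p=N(\mathfrak p)\mid N(c_n)$, contradicting $\ord_p(N(c_n))=0$ from {\sf (1)}, so $\ord_{\mathfrak p}(c_n)=0$. For {\sf (C2)}: writing $c_0=p^{\ell}x_0$ with $x_0=c_0/p^{\ell}\in R$, condition {\sf (1)} gives $p\notdiv N(x_0)$, hence $\mathfrak p\notdiv x_0$ (otherwise $p\mid N(x_0)$), and additivity yields $\ord_{\mathfrak p}(c_0)=\ell\,\ord_{\mathfrak p}(p)+\ord_{\mathfrak p}(x_0)=\ell$; therefore $\gcd(n,\ord_{\mathfrak p}(c_0))=\gcd(n,\ell)=1$ by {\sf (2)}. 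For {\sf (C3)}: with $\ell_j=\ord_p(c_j)$ one has $p^{\ell_j}\mid c_j$ and so $\ord_{\mathfrak p}(c_j)\ge\ell_j$, whence {\sf (3)} gives $(n-j)\ord_{\mathfrak p}(c_0)=(n-j)\ell\le n\ell_j\le n\,\ord_{\mathfrak p}(c_j)$. All three hypotheses of Theorem \ref{Orders-thm1} thus hold for the prime $\mathfrak p$, and since $n\le6$ that theorem yields the irreducibility of $f$ over $R$. The only genuinely delicate point is the computation of $\bar{\mathfrak q}$ and its identification with $ac-b^2$ and $ab-c^2\delta$; everything else is bookkeeping with the valuation $\ord_{\mathfrak p}$, and notably the hypothesis $\gcd(n,m)=1$ of Theorem \ref{Orders-thm2} is not needed here precisely because we obtain $\ord_{\mathfrak p}(p)=1$ outright.
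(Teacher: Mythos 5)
Your proof is correct, but the decisive step is handled by a genuinely different mechanism than the paper's. The paper isolates the key fact as Lemma \ref{Orders-lem2} ($\mathfrak p^2 \nmid p$) and proves it by pure linear algebra: assuming $\mathfrak p^2w=p$, it applies Cramer's rule to the $3\times 3$ system for the coordinates of $w$, computes $\Delta_0=p^2$ and $\Delta_i=(-1)^{i-1}p\Sigma_i$, extracts $p\mid 3\delta(ab-c^2\delta)(ac-b^2)$ from the identity $\Sigma_1=pa+3\delta(ab-c^2\delta)(ac-b^2)$, and then uses two quadratic identities rewriting $\Sigma_1$ and $\Sigma_2$ to show that either divisibility in (\ref{Orders-prop2-label1}) forces the other, contradicting the hypothesis. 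You instead exhibit the cofactor $\mathfrak q=\sigma_1(\mathfrak p)\sigma_2(\mathfrak p)\in R$ explicitly, so that $p=\mathfrak p\mathfrak q$, and decide whether $\mathfrak p\mid\mathfrak q$ by passing to the residue field $R/\mathfrak pR\cong\mathbb F_p$ (legitimate, since $[R:\mathfrak pR]=|N(\mathfrak p)|=p$), where eliminating $a$ via $a\equiv-b\bar r-c\bar r^2$ collapses $\bar{\mathfrak q}$ to $3\bar r^2(b^2+bc\bar r+c^2\bar r^2)$ --- I have checked this computation and the two companion congruences $ac-b^2\equiv-(b^2+bc\bar r+c^2\bar r^2)$, $ab-c^2\delta\equiv-\bar r(b^2+bc\bar r+c^2\bar r^2)$, and they are right. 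Your route buys several things: it proves the sharper unconditional statement that the two divisibilities in (\ref{Orders-prop2-label1}) are each \emph{equivalent} to $\mathfrak p^2\mid p$ (the paper only derives the implications under the assumption $\mathfrak p^2\mid p$, which is all it needs); it yields $\ord_{\mathfrak p}(p)=1$ outright, which makes the verification of {\sf (C2)} a one-line valuation identity $\ord_{\mathfrak p}(c_0)=\ell\ord_{\mathfrak p}(p)+\ord_{\mathfrak p}(x_0)=\ell$, where the paper instead argues by contradiction from $p^{\ell}y_0=\mathfrak p^{\ell+1}w$; and your closing remark correctly explains why $\gcd(n,m)=1$ from Theorem \ref{Orders-thm2} is dispensable here (the paper's proof of Theorem \ref{Orders-thm2} needed it only to control the multiplicity $e$, which for you is pinned to $1$). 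What the paper's approach buys in exchange is that it never leaves $R$: it invokes no embeddings, Galois closure, or residue fields, only determinants and polynomial identities, in keeping with the paper's stated preference for purely elementary algebraic arguments. Your verifications of {\sf (C1)} and {\sf (C3)} and the final appeal to Theorem \ref{Orders-thm1} coincide with the paper's; the implicit points you use --- $x_0=c_0/p^{\ell}\in R$, $N(x)\in\mathbb Z$ for $x\in R$, and multiplicativity of $\ord_{\mathfrak p}$ via Corollary \ref{corU2} --- are all sound.
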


First we prove one auxiliary result, which is in a sense similar to the observation, given in the proof of Corollary \ref{Orders-cor1}, that there considered prime $\mathfrak q$ does not divide its conjugate $\mathfrak q^*$.

\begin{lem}
\label{Orders-lem2}
Let $\delta , \alpha, R, \mathbb K, p$ and $\mathfrak p$ be as in the above proposition. Also assume that for the prime element $\mathfrak p$ the condition \textup{(\ref{Orders-prop2-label1})} holds.
Then $\mathfrak p^2$ does not divide $p$ in $R$.
\end{lem}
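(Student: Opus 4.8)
The plan is to reduce the statement to a computation over the residue field at $\mathfrak p$. Write $P(T)=cT^2+bT+a\in\mathbb Z[T]$, so $\mathfrak p=P(\alpha)$, and let $\sigma_1,\sigma_2$ be the two (complex conjugate) embeddings of $\mathbb K$. First I would record the factorization of $p$ coming from the conjugates of $\mathfrak p$: a direct expansion using $\alpha^3=\delta$ shows that $\mathfrak q:=\sigma_1(\mathfrak p)\sigma_2(\mathfrak p)$ already lies in $R=\mathbb Z[\alpha]$ and equals
\[
\mathfrak q=(a^2-bc\delta)+(c^2\delta-ab)\alpha+(b^2-ac)\alpha^2 .
\]
Since $N(\mathfrak p)=\mathfrak p\,\sigma_1(\mathfrak p)\sigma_2(\mathfrak p)=p$, this yields the identity $p=\mathfrak p\,\mathfrak q$ in $R$ (one checks the constant coordinate of the product is $a^3+\delta b^3+\delta^2 c^3-3\delta abc=N(\mathfrak p)$). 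As $R$ is a domain and $\mathfrak p\neq0$, cancelling $\mathfrak p$ gives the key equivalence $\mathfrak p^2\mid p\iff\mathfrak p\mid\mathfrak q$. Observe that the two nonconstant coordinates of $\mathfrak q$ are exactly $-(ab-c^2\delta)$ and $-(ac-b^2)$, so the hypothesis \textup{(\ref{Orders-prop2-label1})} says precisely that not both are divisible by $p$.

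Next I would pass to the residue field. Because $\mathfrak p\in\Pr R$ and $N(\mathfrak p)=p$, the principal ideal $\mathfrak pR$ has index $p$ in $R$, so $R/\mathfrak pR$ is a finite integral domain, hence $R/\mathfrak pR\cong\mathbb F_p$, and the composite $\mathbb Z\to R\to\mathbb F_p$ is reduction mod $p$. Writing $\bar{(\,)}$ for reduction and $\bar\alpha$ for the image of $\alpha$, we have $\bar\alpha^3=\delta$ in $\mathbb F_p$ and $\bar P(\bar\alpha)=\overline{\mathfrak p}=0$. Since $p\neq3$ and $\gcd(p,\delta)=1$, the polynomial $T^3-\delta$ is separable over $\mathbb F_p$; as $\bar\alpha$ is a root, $T^3-\delta=(T-\bar\alpha)g(T)$ with $g(T)=T^2+\bar\alpha T+\bar\alpha^2$, and $g$ is coprime to $T-\bar\alpha$ because $g(\bar\alpha)=3\bar\alpha^2\neq0$. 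Since $\mathfrak q=\sigma_1(\mathfrak p)\sigma_2(\mathfrak p)$ is symmetric in the conjugate pair of roots, its coordinate polynomial reduces, on replacing that pair by the two roots $\beta_2,\beta_3$ of $g$, to
\[
\overline{\mathfrak q}=\bar P(\beta_2)\,\bar P(\beta_3).
\]
Thus $\mathfrak p\mid\mathfrak q\iff\overline{\mathfrak q}=0\iff \bar P$ and $g$ have a common root in $\overline{\mathbb F_p}$.

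The crux is then a short case analysis. By the norm condition $\bar P\neq0$ (otherwise $p\mid a,b,c$ forces $p^3\mid N(\mathfrak p)$), and $\bar P(\bar\alpha)=0$. If $\deg\bar P=1$, then $\bar\alpha$ is the only root of $\bar P$, it is not a root of $g$, and $\overline{\mathfrak q}=\bar b^2 g(\bar\alpha)=3\bar b^2\bar\alpha^2\neq0$. If $\deg\bar P=2$, write $\bar P=\bar c(T-\bar\alpha)(T-\gamma)$; then $\overline{\mathfrak q}=\bar c^2 g(\bar\alpha)g(\gamma)=3\bar c^2\bar\alpha^2\,g(\gamma)$, so $\overline{\mathfrak q}=0$ forces $g(\gamma)=0$, i.e. $\gamma$ is a second cube root of $\delta$ in $\mathbb F_p$. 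Hence $T^3-\delta$ splits completely over $\mathbb F_p$, a primitive cube root of unity $\bar\omega\in\mathbb F_p$ exists, and the roots of $\bar P$ are $\bar\alpha$ and $\bar\alpha\bar\omega$. Substituting $\bar b/\bar c=-(\bar\alpha+\bar\alpha\bar\omega)$ and $\bar a/\bar c=\bar\alpha\cdot\bar\alpha\bar\omega$ and using $1+\bar\omega+\bar\omega^2=0$ gives at once
\[
\overline{ac-b^2}=-\bar c^2\bar\alpha^2(1+\bar\omega+\bar\omega^2)=0,\qquad \overline{ab-c^2\delta}=-\bar c^2\bar\alpha^3(1+\bar\omega+\bar\omega^2)=0.
\]
So $\overline{\mathfrak q}=0$ forces $p\mid(ab-c^2\delta)$ and $p\mid(ac-b^2)$, contradicting \textup{(\ref{Orders-prop2-label1})}; therefore $\mathfrak p\nmid\mathfrak q$, that is, $\mathfrak p^2\nmid p$. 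I expect the main obstacle to be justifying the identity $\overline{\mathfrak q}=\bar P(\beta_2)\bar P(\beta_3)$ and its clean evaluation $\overline{\mathfrak q}=3\bar c^2\bar\alpha^2 g(\gamma)$, i.e. correctly tracking that reduction mod $\mathfrak p$ sends the non-real conjugate pair to the two roots of the complementary factor $g$; once this is in place, the coefficient computation collapses through $1+\bar\omega+\bar\omega^2=0$. As an alternative to the residue-field argument one can proceed by Cramer's rule, exactly as in the proof of Corollary \ref{Orders-cor1}: the relation $\mathfrak p^2\mid p$ means the integer linear system whose matrix is the multiplication-by-$\mathfrak p$ matrix (of determinant $N(\mathfrak p)=p$) and whose right-hand side is the coordinate vector of $\mathfrak q$ admits a solution in $\mathbb Z^3$, and one shows that \textup{(\ref{Orders-prop2-label1})} makes one of the Cramer numerators prime to $p$.
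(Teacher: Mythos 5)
Your main argument is correct, and it takes a genuinely different route from the paper's. The paper attacks the hypothesis $\mathfrak p^2w=p$ head-on: it writes down the multiplication-by-$\mathfrak p^2$ matrix $M$ in the basis $(1,\alpha ,\alpha ^2)$, computes $\Delta _0=\det M=N(\mathfrak p)^2=p^2$, and applies Cramer's rule, so that integrality of the coordinates of $w$ forces $p$ to divide the explicit quartics $\Sigma _1,\Sigma _2$; the rewritings $\Sigma _1=2(a^2-bc\delta )(ab-c^2\delta )-\delta (ac-b^2)^2$ and $\Sigma _2=-2(a^2-bc\delta )(ac-b^2)+(ab-c^2\delta )^2$ then show that $p$ dividing either of $ab-c^2\delta$, $ac-b^2$ forces it to divide the other, contradicting (\ref{Orders-prop2-label1}). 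Your parenthetical formula $x_0=a+3(ab-c^2\delta )(ac-b^2)\delta /p$ is literally the paper's pivot, and your closing ``alternative'' sketch is a mild variant of the paper's actual proof (the paper solves $\mathfrak p^2w=p$ rather than $\mathfrak pw=\mathfrak q$). Your residue-field route is sound at every step: the factorization $p=\mathfrak p\mathfrak q$ with the explicit cofactor $\mathfrak q=\sigma _1(\mathfrak p)\sigma _2(\mathfrak p)\in R$ plus cancellation reduces the lemma to $\overline{\mathfrak q}\neq 0$ in $R/\mathfrak pR\cong \mathbb F_p$, and the identity $\overline{\mathfrak q}=\bar P(\beta _2)\bar P(\beta _3)$ that you flag as the main obstacle needs no delicate ``tracking of conjugates'' at all: expanding $P(\beta _2)P(\beta _3)$ in the symmetric functions $\beta _2+\beta _3=-\bar\alpha$, $\beta _2\beta _3=\bar\alpha ^2$ yields $(a^2-bc\delta )+(c^2\delta -ab)\bar\alpha +(b^2-ac)\bar\alpha ^2$ over any commutative ring, which is exactly $\overline{\mathfrak q}$. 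One small point worth recording in the degree-two case: the second root $\gamma$ of $\bar P$ lies in $\mathbb F_p$ by Vieta (its sum with $\bar\alpha$ is $-\bar b/\bar c$), which is what legitimizes ``$\bar\omega \in\mathbb F_p$''; though your final evaluation is valid in $\overline{\mathbb F_p}$ regardless, so nothing breaks. What your approach buys is a conceptual reading that the paper's determinant computation hides: $\mathfrak p^2\mid p$ holds exactly when $\bar P$ shares a root with the cofactor $g$ of $T-\bar\alpha$ in $T^3-\bar\delta$, i.e.\ when the second root of $\bar P$ modulo $p$ is another cube root of $\delta$, and condition (\ref{Orders-prop2-label1}) is precisely what excludes this (one can check, conversely, that if $p$ divides both $ab-c^2\delta$ and $ac-b^2$ then, since $\bar P(\bar\alpha )=0$ and $\bar P\neq 0$ force $\bar c\neq 0$, one gets $\overline{\mathfrak q}=0$, so the hypothesis is sharp). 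What the paper's proof buys is self-containedness: it runs parallel to the Cramer's-rule argument in the proof of Corollary \ref{Orders-cor1} and uses nothing beyond $3\times 3$ determinants, in particular not the fact that $R/\mathfrak pR$ has $|N(\mathfrak p)|$ elements.
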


\begin{proof}
It is easy to see that the corresponding norm $N=N_{\mathbb K|\mathbb Q}$ equals
\begin{equation}
\label{Orders-lem2-label1}
N(x+y\alpha +z\alpha ^2)= x^3 +\delta y^3 +\delta ^2z^3 -3\delta xyz,
\end{equation}
for any $x,y,z\in\mathbb Q$. Now suppose to the contrary, that $\mathfrak p|p$, and let $w=x_0+y_0\alpha +z_0\alpha ^2\in R$ be such that $\mathfrak p^2w=p$. Then define the matrix
$$
M=\begin{pmatrix} a^2+2bc\delta & \delta (b^2+2ac) & \delta (c^2\delta +2ab) \\
   c^2\delta +2ab & a^2+2bc\delta & \delta (b^2+2ac) \\
   b^2+2ac & c^2\delta +2ab & a^2+2bc\delta \end{pmatrix}
$$
and the column vectors $\boldsymbol{v} =(x_0,y_0,z_0)^t$ and $\boldsymbol{b} =(p,0,0)^t$, where $t$ denotes the transpose. Then by the above equality $\mathfrak p^2w=p$ we obtain the system
$
M\boldsymbol{v}= \boldsymbol{b},
$
of three equations in the unknowns $x_0,y_0$ and $z_0$. Now define $M_i$ to be the matrix obtained so that the $i$-th column of $M$ is replaced by $\boldsymbol{b}$. And let then $\Delta _0=\det M$ and $\Delta _i=\det M_i$ for $1\leq i\leq 3$. Using the equality $p=a^3+\delta b^3 +\delta ^2c^3 -3\delta abc$, with a little effort one can check that $\Delta _0=p^2$. Also, if we define
 \begin{equation*}
  \begin{split}
  \Sigma _1 & = a^4 +3b^2c^2\delta ^2 -2ab^3\delta -2ac^3\delta ^2 , \\
  \Sigma _2 & = 2bc^3\delta ^2 -3a^2c^2\delta +2a^3b -b^4\delta , \\
  \Sigma _3 & = c^4\delta ^2 +3a^2b^2 - 2a^3c - 2b^3c\delta ,
   \end{split}
  \end{equation*}
then $\Delta _i=(-1)^{i-1} p\Sigma _i$, for $1\leq i\leq 3$. Using Cramer's rule one more time we in particular obtain that
$$
x_0=\Delta _1/\Delta _0= \Sigma _1/p =a+ \frac{3(ab-c^2\delta )(ac-b^2)\delta}{p} \in\mathbb Z.
$$
Taking into account that $y_0=\Delta _2/\Delta _0$ and $z_0=\Delta _3/\Delta _0$ are integers as well, we obtain at once that $p$ divides both $\Sigma _1$ and $\Sigma _2$.

Now assume that $p|ab-c^2\delta$. And then rewrite $\Sigma _1$ as
$$
\Sigma _1= 2(a^2-bc\delta )(ab-c^2\delta ) - \delta (ac-b^2)^2.
$$
Hence we get that also $p| ac-b^2$, which is impossible. The other possibility is that $p|ac-b^2$. This time we rewrite $\Sigma _2$ as
$$
\Sigma _2= -2(a^2-bc\delta )(ac-b^2) + (ab-c^2\delta )^2.
$$
Thus it follows that also $p|ab-c^2\delta$, which again cannot be.
\end{proof}

\begin{proof}[Proof of Proposition \textup{\ref{Orders-prop2}}]
Analogously as in the proof of Theorem \ref{Orders-thm2} we have to see that for $\mathfrak p$ the conditions {\sf (C1)}\! -- {\sf (C3)} of Theorem \ref{Orders-thm1} do hold. For {\sf (C1)} it is clear. In order to get the other two conditions it will suffice to show that
\begin{equation}
\label{Orders-prop2-label2}
\ord _{\mathfrak p}(c_0)= \ell .
\end{equation}
Clearly, $\ord _{\mathfrak p}(c_0)\geq \ell$. So assume that $\mathfrak p^{\ell +1}$ divides $c_0$ in $R$. And let $w\in R$ be such that $c_0=\mathfrak p^{\ell +1}w$. At the same time let $y_0\in R$ be such that $c_0=p^{\ell} y_0$, where by the condition {\sf (1)} we know that $p\notdiv N(y_0)$. Thus in particular it follows that $p^{\ell}y_0= \mathfrak p^{\ell +1}w$. As by the previous lemma we have that $\mathfrak p^2\notdiv p$, it is immediate that $\mathfrak p$ necessarily divides $y_0$ in $R$. But this means that $y_0=\mathfrak pr$, for some $r\in R$. Hence we deduce that $N(y_0) =pN(r)$, which cannot be. Thus we have (\ref{Orders-prop1-label2}) proved, and then it is clear that {\sf (C2)} holds. Besides the condition {\sf (C3)} holds as well; cf. the corresponding argument in the proof of Theorem \ref{Orders-thm2}.
\end{proof}

\begin{ex}
\label{Orders-ex2}
Suppose $k\in\mathbb Z$ is odd and square-free such that $k^2-4$ is not divisible by $9$; e.g., $|k|\in \{ 1,3,5,13,15,\ldots\}$. Define $\delta =4k$ and let $\alpha$ be the real cube root of $\delta$. By \cite[Thm$.$ 6.4.13]{C} we know that $(1,\alpha ,\alpha ^2/2)$ is an integral basis of $\mathbb K=\mathbb Q(\alpha )$. And thus $R=\mathbb Z[\alpha ]$ is a non-maximal order in $\mathbb K$; in particular it is not a UFD. Suppose there is a prime $p\in \Pr \mathbb Z$ for which we can find some (prime) $\mathfrak p=a+b\alpha +c\alpha ^2$ in $R$ so that (\ref{Orders-prop2-label1}) holds and $N(\mathfrak p)=p$; where $N$ is as in (\ref{Orders-lem2-label1}). Now let $f=c_nX^n+\cdots +c_1X+c_0$ be a polynomial in $\mathbb Z[X]$, for $n\leq 6$. Put $\ell =\ord _p(c_0)$ and assume the following: {\sf (1)} $\ord _p(c_n)=0$; {\sf (2)} $\gcd (n,\ell )=1$; {\sf (3)} $(n-j)\ell \leq n\ord _p(c_j)$ for every $1\leq j\leq n-1$. Then $f$ is irreducible over $R$. Note that for $n\leq 6$ this is a nontrivial generalization of the classical Eisenstein-Dumas criterion, stated in the Introduction.

For example suppose that for the polynomial $f$ as above there exists a prime $p\in\{ 11,13,19,29\}$ so that: $(\boldsymbol{\dagger})$  $p|c_j$ for every $0\leq j<n$, while both $p\notdiv c_n$ and $p^2\notdiv c_0$. We claim that $f$ is irreducible as a polynomial over the non-UFD over-ring $R=\mathbb Z[\sqrt[3]{12}]$ of $\mathbb Z$. Namely here $\delta =12$ and $\alpha =\sqrt[3]{12}$. If we define $\mathfrak p_1=\alpha -1$, $\mathfrak p_2=\alpha +1$, $\mathfrak p_3=7-3\alpha$ and  $\mathfrak p_4=5-2\alpha$, then $N(\mathfrak p_1)=11$, $N(\mathfrak p_2)=13$, $N(\mathfrak p_3)=19$ and $N(\mathfrak p_4)=29$. Clearly for every $\mathfrak p_i$ the condition (\ref{Orders-prop2-label1}) is fulfilled. Thus our claim follows by Proposition \ref{Orders-prop2}. Also suppose that for example there is a prime $p\in \{ 5,11,17,31\}$ for which the above condition $(\boldsymbol{\dagger})$ holds. Then it is easy to conclude that $f$ is irreducible over the non-UFD ring $R=\mathbb Z[\sqrt[3]{4}]$. Similar conclusions can be obtained for some other orders $R$ in pure cubic fields $\mathbb Q(\sqrt[3]{\delta})$.
\end{ex}

\end{document}